\newtheorem{thm}{Theorem}[subsection]
\newtheorem{lemma}[thm]{Lemma}
\newtheorem{prop}[thm]{Proposition}
\newtheorem{cor}[thm]{Corollary}
\newtheorem{defn}[thm]{Definition}
\newtheorem{question}[thm]{Question}
\theoremstyle{remark}
\newtheorem{remark}[thm]{Remark}
\numberwithin{equation}{subsection}
\newcommand{\A}{\mathbb{A}}
\newcommand{\cc}{\mathbb{C}}
\newcommand{\zz}{\mathbb{Z}}
\newcommand{\bL}{\mathbb{L}}
\newcommand{\bp}{\mathbb{P}}
\newcommand{\K}{\mathcal{K}}
\newcommand{\M}{\mathcal{M}}
\newcommand{\T}{\mathscr{F}}
\newcommand{\oo}{\mathscr{O}}
\newcommand{\phib}{\bar{\psi}}
\newcommand{\taub}{\bar{\tau}}
\newcommand{\mbar}{\overline{\M}}
\newcommand{\on}{\operatorname}
\newcommand{\ft}{\rho}
\newcommand{\llangle}{ \Big[  } 
\newcommand{\rrangle}{ \Big] } 
\DeclareMathOperator{\bl}{Bl}
\DeclareMathOperator{\st}{st}
\DeclareMathOperator{\pr}{pr}
\DeclareMathOperator{\Fl}{Fl}
\DeclareMathOperator{\vir}{vir}
\DeclareMathOperator{\Tor}{\mathcal{T}\!\emph{or}}
\DeclareMathOperator{\Spec}{Spec}
\DeclareMathOperator{\aut}{Aut}
\DeclareMathOperator{\ev}{ev}
\author[Y.-P.~Lee]{Yuan-Pin~Lee}
\address{Y.-P.~Lee: Department of Mathematics, University of Utah,
Salt Lake City, Utah 84112-0090, U.S.A.}
\email{yplee@math.utah.edu}
\author[H.-W.~Lin]{Hui-Wen~Lin}
\address{H.-W.~Lin: Department of Mathematics and Taida
Institute of Mathematical Sciences (TIMS),
National Taiwan University, Taipei 10617, Taiwan}
\email{linhw@math.ntu.edu.tw}
\author[F.~Qu]{Feng~Qu}
\address{F.\ Qu: Department of Mathematics, University of Utah,
Salt Lake City, Utah 84112-0090, U.S.A.}
\email{qu@math.utah.edu}
\author[C.-L.~Wang]{Chin-Lung~Wang}
\address{C.-L.\ Wang: Department of Mathematics,
Center for Advanced Studies in Theoretical Sciences (CASTS), and Taida
Institute of Mathematical Sciences (TIMS),
National Taiwan University, Taipei 10617, Taiwan}
\email{dragon@math.ntu.edu.tw}
\thanks{2010 Mathematics Subject Classification: 14N35, 14E30}
\title[Invariance of Quantum Rings III]{Invariance of quantum rings\\ 
under ordinary flops: III}
\begin{document}

\maketitle

\begin{abstract}
The paper is a sequel to \cite{LLW1, LLW2}, as part of our project to study a case of Crepant Transformation Conjecture: $K$-equivalence Conjecture for ordinary flops. In this paper we prove the invariance of quantum rings for general ordinary flops, whose local models are certain \emph{non-split} toric bundles over \emph{arbitrary} smooth base. 
An essential ingredient in the proof is a \emph{quantum splitting principle} which reduces a statement in Gromov--Witten theory on non-split bundles to
the case of split bundles.
\end{abstract}

\small
\tableofcontents
\normalsize

\section{Introduction} \label{s:0}
The paper is Part III of our ongoing efforts to establish the \emph{$K$-equivalence conjecture} \cite{Wang2, Wang3} for general ordinary flops, a special but pivotal case of the \emph{Crepant Transformation Conjecture}. (See Section \ref{s:1} for definition of ordinary flops.) In Part I and Part II \cite{LLW1, LLW2}, we solve the genus zero case for which the local models are \emph{split} projective bundles over smooth bases. In this paper, we extend the result to general ordinary flops whose local models are \emph{non-split} projective bundles over \emph{arbitrary} smooth bases. As far as we know, this is the first general result for non-split bundles (which can not be transformed via deformations etc.\ into split bundles) in Gromov--Witten theory. The previous results are mostly based on localization of fiberwise $\cc^*$ action for which bundles must be direct sums of line bundles. We believe that the techniques developed here can be applied to other problems in Gromov--Witten theory. For example, they will form the backbone of the \emph{quantum splitting principle} \cite{FQ}. Meanwhile, the full procedure on \emph{analytic continuations of quantum cohomolgy rings} initiated in \cite{LLW} and completed here, together with the \emph{inductive structure on stratified flops} established in \cite{FW}, will form the foundation to attack the $K$-equivalence conjecture.    

\subsection{Main results}
Recall that given an ordinary flop 
$$
f: X \dashrightarrow X'
$$ 
there is a canonically induced isomorphism of Chow motives by the graph closure $\bar\Gamma_f$. See \cite{LLW, LLW1} for definitions and results. 
In particular, it induces an isomorphism of Chow groups and Cohomology groups 
$$
\T = [\bar\Gamma_f]_*: H(X) \to H(X'),
$$ 
with the Poincar\'e pairing preserved. To extend the correspondence to the context of Gromov--Witten theory, the quantum variables still need to be identified in order for the comparison to work. To that end, we set
\[
 \T (q^{\beta})= q^{\T(\beta)}.
\]
Our main result is the following theorem.

\begin{thm} \label{t:main}
For a general ordinary flop $X \dashrightarrow X'$, $\T$ induces an isomorphism of big quantum rings of $X$ and $X'$ after an analytic continuation over the Novikov variables corresponding to the extremal rays. Furthermore, the same results hold for relative primary invariants and (relative) ancestors.
\end{thm}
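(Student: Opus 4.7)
The plan is to reduce the non-split situation to the split case already handled in \cite{LLW1, LLW2}. Given a local model $\pi: P_Z(F) \to Z$ of the flop with $F$ a non-split vector bundle, the classical splitting principle tells us that $\sigma^* F$ admits a complete flag with line-bundle quotients on the full flag bundle $\sigma: \Fl(F) \to Z$. Performing the corresponding base change along $\sigma$ produces an induced ordinary flop $\tilde X \dashrightarrow \tilde X'$ whose local model over $\Fl(F)$ is \emph{split}. Since Part II treats split local models over arbitrary smooth bases, Theorem~\ref{t:main} applies to $\tilde X \dashrightarrow \tilde X'$, and the remaining task is to descend the invariance from $\tilde X$ back to $X$.

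\textbf{Quantum splitting principle.} The descent is implemented by a \emph{quantum splitting principle}: every identity of Gromov--Witten invariants on $X$ is tested after pullback to $\tilde X$. Concretely I would establish a Leray--Hirsch style decomposition
\[
H^*(\tilde X) \simeq H^*(X) \otimes_{H^*(Z)} H^*(\Fl(F))
\]
together with a compatible splitting of effective curve classes into a piece lying over $X$ and a piece lying in the flag directions. Expressing each GW invariant of $X$ as a fiber integral of a corresponding GW invariant of $\tilde X$ against explicit flag-cohomology insertions, one verifies that the $\T$-correspondence commutes with this operation: the graph closure $\bar\Gamma_f$ base-changes to $\bar\Gamma_{\tilde f}$, and the identification $\T(q^\beta) = q^{\T(\beta)}$ is preserved once the flag-direction Novikov variables are matched canonically on both sides.

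\textbf{Analytic continuation and extensions.} The analytic continuation over the extremal Novikov variable of $X$ is inherited from that of $\tilde X$, since the extremal ray of the flop is pulled back unchanged by $\sigma$, while the flag Novikov variables only enter as formal parameters with no singularities to continue. For the statements on ancestors and relative primary invariants I would invoke the reconstruction machinery developed in Parts I--II: ancestors reduce to big quantum products by the topological recursion of $\psi$-classes, and relative primary invariants reduce via the degeneration formula to absolute ones. Both reductions commute with the flag bundle pullback, so the invariance of the big quantum ring propagates to these finer invariants.

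\textbf{Main obstacle.} The central difficulty is to justify the quantum splitting principle itself. Classically the principle is a tautology in cohomology, but in Gromov--Witten theory the pulled-back moduli spaces $\mbar_{g,n}(\tilde X)$ contain genuinely new stable maps moving in the flag directions, together with extra virtual dimension. The heart of the argument is to show that the virtual pushforward $\mbar_{g,n}(\tilde X) \to \mbar_{g,n}(X)$ organizes these extra contributions into a clean integration against a top flag class, compatible with $\T$ and with the Novikov grading, so that no spurious relations arise. Establishing this controlled pushforward, which we expect to serve as a prototype for the general quantum splitting principle of \cite{FQ}, is the technical core of the paper.
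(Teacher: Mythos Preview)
Your high-level aim --- reduce to the split case of \cite{LLW2} --- matches the paper, but the mechanism you propose is not the one the paper uses, and as stated it has a real gap.

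The paper does \emph{not} pass to the full flag bundle $\Fl(F)$ and then descend. Instead it observes (Lemma~\ref{l:5.3.1}) that the flag bundle admits a rational section, and by Hironaka one resolves this section via a finite sequence of blowups $S_{i+1} = \bl_{T_i} S_i$ of the base. After these blowups the pulled-back bundles admit complete flags on a base \emph{of the same dimension} as $S$, and then deform to split bundles (Lemma~\ref{l:deform}, Proposition~\ref{prop:definv}). The hard work is comparing $\T$-invariance before and after each blowup, which is done by deformation to the normal cone $W(S_i, T_i)$ plus the degeneration formula, together with a delicate inversion-of-degeneration argument (Sections~5--7) that carefully substitutes ancestors for descendants and uses Manolache's strong virtual pushforward to control the $\bp^1$-bundle pieces. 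A refined induction on $\dim T$ (Section~\ref{TheProof}) closes the loop.

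Your proposed route requires recovering the Gromov--Witten theory of $X$ from that of the flag-bundle pullback $\tilde X$. This is the \emph{wrong direction} for all known tools: quantum Leray--Hirsch (Theorem~\ref{QLH-fiber}) and the results of \cite{LLW2} go from base to total space, not back. The ``virtual pushforward $\mbar_{g,n}(\tilde X) \to \mbar_{g,n}(X)$ organizes into integration against a top flag class'' is precisely what is not available; curves moving in the flag directions mix nontrivially with curves in $X$, and there is no clean projection formula at the level of virtual classes. You acknowledge this as ``the central difficulty'' but offer no method beyond citing \cite{FQ}, which is work in progress that \emph{builds on} the present paper rather than feeding into it. In effect you are assuming a statement at least as hard as Theorem~\ref{t:main} itself. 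The paper's blowup approach sidesteps this entirely by never leaving the category of bases birational to $S$.
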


Some explanation is in order. Firstly, the cohomological ring structures are not preserved under $\T$ \cite{LLW}. 
Since the quantum ring is a deformation of the classical ring, it might seem impossible to have isomorphic quantum rings without isomorphic classical ones. Secondly, for an effective curve class $\beta$ in $X$, $\T (\beta)$ is in general not an effective curve in $X'$, and in that case $q^{\T(\beta)}$ is not in the Novikov ring of $X'$. These two problems are solved simultaneously with \emph{analytic continuation}. More precisely, the comparison of Gromov--Witten theory is only valid between generating functions summing over contributions from the extremal rays. See Section~\ref{s:1.2} for definition of the term \emph{generating functions}. Theorem~\ref{t:main} implies that these generating functions on $X$ are analytic in $q^{\ell}$, the Novikov variable corresponding to the extremal ray $\ell$. Similarly, the corresponding generating functions on $X'$ are analytic in $\ell'$. In this sense $\T$ identifies the corresponding generating functions on $X$ and on $X'$ \emph{as analytic functions in $q^{\ell}$} (while remaining formal in other Novikov variables). 

We remark that analytic continuations can also be formulated on the complexified K\"ahler moduli as  done in \cite{LLW, ILLW, LLW1}. We note also that Theorem~\ref{t:main} does not hold for descendants. Examples are given in \cite{LLW}.

\subsection{Outline of the strategy} \label{strategy}
The strategy of the proof involves steps of reduction.
In Part I \cite{LLW1}, a degeneration argument together with various reconstruction results reduce the proof to 
the corresponding statements on the \emph{local models}.
We note that a local model of an ordinary flop is constructed from a triple
$(S,F,F')$ consisting of two vector bundles $F$ and $F'$ of equal rank
over a smooth projective variety $S$. Indeed, the $f$ exceptional loci $Z \subset X$ and $Z' \subset X'$ are projective bundles 
$$\phib: Z = \Bbb P_S(F) \to S, \quad \phib': Z' = \Bbb P_S(F') \to S,$$ 
and the local models are toric (double projective) bundles over $S$:
\[
 \begin{split}
 &X = \Bbb P_{Z}(\mathscr{O}_Z(-1) \otimes \phib^* F' \oplus \mathscr{O}), \\
 &X' = \Bbb P_{Z'}(\mathscr{O}_{Z'}(-1) \otimes \phib'^*F \oplus \mathscr{O}).
 \end{split}
\]
The flop $f: X \dasharrow X'$ is the blowup of $X$ along $Z$ followed by contracting the exceptional divisor along the other ruling. The local model of ordinary flops can be viewed as a functor over the triple $(S, F, F')$. (See Section~\ref{s:1.1}.) 

In the next step, we modify the triple by blowing up the base $S$ with the aim of simplifying the structure of the bundles $F$ and $F'$.
Starting with 
$$
(S_0, F_0, F'_0) = (S,F,F'),
$$ 
we construct a sequence of triples $(S_i, F_i, F'_i)_{i \ge 0}$, such that $S_{i+1}$ is obtained by blowing up $S_i$ along some smooth subvariety $Z_i$, and $F_{i+1}$ and 
$F'_{i+1}$ are the pullback of $F_i$ and $F'_i$ from $S_i$ to $S_{i+1}$ respectively. 
We will show
\begin{itemize}
\item[(i)] {\it $\T$-invariance for $(S_i, F_i, F'_i)$ can be reduced to the $\T$-invariance for the triple in the next stage $(S_{i+1}, F_{i+1}, F'_{i+1})$.} 

\item[(ii)] {\it After a finite number of blowups, we obtain a triple $(S_n, F_n, F'_n)$ 
such that $F_n$ and $F'_n$ can be deformed to a direct sum of line bundles.}
\end{itemize}

That is, we end up with an ordinary flop of \emph{splitting type}, for which the $\T$-invariance is proved in Part II via a \emph{quantum Leary--Hirsch theorem for split toric bundles} \cite{LLW2}. Theorem \ref{t:main} then follows from (i). 

Recall that the \emph{Quantum Leray--Hirsch} says that for $X \to S$ a split (iterated) projective bundle, the Dubrovin connection on $X$ can be constructed from a (carefully chosen) lifting of 
the Dubrovin connection on $S$ and the Picard--Fuchs system associated to the fiber. When $f: X \dasharrow X'$ is an ordinary flop, the naturality of the construction in \cite{LLW2} allows us to perform analytic continuations along the fiberwise Novikov variables by way of the Picard--Fuchs systems on $X$ and $X'$.
They turn out to coincide after analytic continuations. 

The \emph{splitting principle} in (ii) via blow-ups is in fact a simple consequence of Hironaka's theorem on resolution of indeterminacies (cf.~Section~\ref{splitting}). Thus the major efforts made in this paper is indeed to prove (i). To relate the $\T$-invariance for  $(S_i, F_i, F'_i)$ to that for $(S_{i+1}, F_{i+1}, F'_{i+1})$, we consider the deformation to the normal cone for $T_i \hookrightarrow S_i$. This is the family 
\[ 
\bl_{T_i \times \{0\}}(S_i \times \A^1) \to \A^1 
\]
with smooth fiber $S_i$ over $\A^1-\{0\}$ and singular fiber 
$$
S_{i+1} \cup _{E_{i}} P_i,
$$ 
where $E_i$ is the exceptional divisor in $S_{i+1}=\bl_{T_i}S_i$, and $P_i$ is the exceptional divisor in $\bl_{T_i \times \{0\}}(S_i \times \A^1)$. By construction, $P_i$ and $E_i$ are themselves projective bundles over $T_i$:
$$
P_i = \Bbb P_{T_i}(N_{T_i/S_i} \oplus \mathscr{O}), \qquad E_i = \Bbb P_{T_i}(N_{T_i/S_i}). 
$$
The bundles  $\bar{F}_{i+1}$ and $\bar{F}'_{i+1}$ on $P_i$ and $E_i$ are pulled backs from the induced bundles on $T_i$, which are compatible with the restrictions of $F_{i+1}$ and $F'_{i+1}$ on $S_{i+1}$. In order to relate the $\T$-invariance for $(S_i, F_i, F'_i)$ with 
that for $(S_{i+1}, F_{i+1}, F'_{i+1})$ and $(P_i, \bar{F}_{i+1}, \bar{F}'_{i+1})$ (as well as for $(E_i, \bar{F}_{i+1}, \bar{F}'_{i+1})$), the degeneration formula \cite{LiRu, Li} is used,
and the corresponding statements must be generalized to relative Gromov--Witten invariants of smooth pairs.

To avoid cumbersome notations, \emph{we will omit the bundles in the notation} when there is no danger of confusion. 
By the degeneration formula, $\T$-invariance for $S_i$ (absolute invariants) is implied by those for $(S_{i+1}, E_i)$ and $(P_i, E_i)$ (relative invariants). We then show that 
\begin{itemize}
\item[(iii)] {\it$\T$-invariance for relative invariants on $(S_{i+1}, E_i)$ follows from the $\T$-invariance for absolute invariants on $S_{i+1}$ and $E_i$. Similarly $\T$-invariance for $(P_i, E_i)$ follows from that for $P_i$ and $E_i$.}
\end{itemize}

This is the most intricate part of the reduction. Before we discuss it, we explain why (iii) allows us to run an inductive proof on the dimension of the base $S$. Clearly $\dim E_i = \dim S_i - 1$ and $\T$-invariance for $E_i$ follows by induction. However, $\dim P_i = \dim S_i$ does not drop. A key observation is that $P_i = \Bbb P_{T_i}(N_{T_i/S_i} \oplus \mathscr{O})$ is constructed from $T_i$ and $\dim T_i < \dim S_i$. If $N_{T_i/S_i}$ splits, then a variant of the quantum Leray--Hirsch still applies. Namely analytic continuations along the base $T_i$ ($\T$-invariance) implies the $\T$-invariance for $P_i$. In fact, the fiberwise Novikov variables in $P_i \to T_i$ need no analytic continuations at all. However, the bundle $N_{T_i/S_i}$ may not be deformable to split bundles. This suggests that we should have taken into account the splitting procedure of those relevant normal bundles during the induction process. It is thus plausible to expect that some kind of \emph{refined induction procedure} may lead to $\T$-invariance for $P_i$. Such a delicate induction is indeed possible, as in Section~\ref{TheProof}. 
Hence $\T$-invariance for $S_{i+1}$ implies that for $S_i$ under the validity of (iii). Note that in the precess we have to go back and forth between absolute and relative invariants 
\emph{without involving descendants} in order to keep the $\T$-invariance in all inductive steps.

It remains to prove (iii). 
Similar ideas were already used by Maulik and Pandharipande in \cite{MP}, where it was shown that the relative descendants of a \emph{smooth pair} $(S, D)$ ($D$ a smooth divisor in $S$) 
are determined by the absolute descendants of $S$ and $D$. This is accomplished by considering the \emph{trivial} deformations to the normal cone 
\[ \bl_{D \times \{0\}}(S \times \A^1) \to \A^1 \]
with smooth fiber $S$ over $\A^1 \setminus\{0\}$ and singular fiber $S \cup _{D} P$, where 
$$P = \Bbb P_D(N_{D/S} \oplus \mathscr{O})$$ 
is a projectivized split bundle of rank two. 
In our case the degeneration formula (which we recall in Section~\ref{deg-formula}) decompose absolute invariants into convolution of relative invariants:
$$
\langle ~\alpha ~\rangle^{X(S)}_\beta = \sum_{\eta = (\Gamma_1, \Gamma_2), I} C_\eta\,\langle ~\alpha_1\mid \mu, e^I~\rangle^{\bullet (X(S), X(D))}_{\Gamma_1}  \langle ~\mu, e_I \mid \alpha_2~ \rangle^{\bullet (X(P), X(D))}_{\Gamma_2}.
$$
It gives rise to a system of linear equations where the relative invariants for the $\Bbb P^1$ bundle $X(P) \to X(D)$ are treated as \emph{coefficients of the system}. 
To handle these coefficients,  fiberwise localization on the $\Bbb P^1$ bundle is used and that inevitably introduces the descendants and, as noted above, breaks $\T$-invariance in the induction. 

While we employ some ideas from \cite{MP}, 
the arguments there have to be considerably modified to be useful for our purpose. 
In particular, we substitute \emph{ancestors} for descendants in key steps of degeneration analysis. 
Furthermore, the localization was replaced by more complex degeneration argument and the \emph{strong virtual pushforward property} developed by Cristina Manolache in \cite{vpf}.
These form the basis of inversion of degeneration arguments, and of the proof of (iii).

\begin{remark}
Indeed, the vital role played by ancestors in the study of Crepant Transformation Conjecture and the $K$-equivalence Conjectures had been first advocated and studied in \cite{ILLW, ypL} where the invariance of higher genus Gromov--Witten theory was established for \emph{simple} ordinary flops.
\end{remark}

\begin{remark}
Even though the theory of \emph{algebraic cobordism} \cite{LP3, mLP}
is not explicitly utilized, it inspires many ideas in this paper.
See Section~\ref{s:8.2} for some comments.
The complexity of the arguments could have been reduced 
and results greatly generalized had the
ideas inspired by algebraic cobordism been applicable.
\end{remark}

\subsection{Outline of the contents}
The sections of this paper are arranged in its logical order towards the proof of Theorem \ref{t:main}. 
The order of presentation is different from that described in the strategy outlined above. 
Thus we would like to briefly describe the contents in order for the readers to quickly locate necessary details for each step.   

In Section~1, we give some basic definitions and recall some facts on ordinary flops and Gromov--Witten theory.
In particular we define, in Section~\ref{s:1.3}, the notion of $\T$-invariance for a projective smooth variety $S$,
or rather a triple $(S, F, F')$ with $F$ and $F'$ two vector bundles on $S$ of equal rank.
We also recall, in Proposition~\ref{p:1.1.1}, that the proof of Theorem~\ref{t:main} can be reduced to that of ``local models'', a notion defined in Section~\ref{s:1.1}.
The degeneration formula and the product formula for relative invariants are briefly described in Sections~\ref{deg-formula} and \ref{s:1.7}.
As explained above, we our goal is to show a quantum splitting principle.
In Section~\ref{splitting}, we explain how splitting a bundle is done by birational modification and deformation.
The entire paper can be considered as a verification that this simple proceedure can be carried out while keeping $\T$-invariance in all steps.

Sections 2 to 4 are technical results needed for our proof, and can be omitted in the first reading by assuming the results. 
In Section~2, we recall and prove some general results about the \emph{fiber integrals}. 
Section~3 is devoted to some structural results relating 
invariants of a $\bp^1$ bundle with those of its base, without invoking fiberwise localization. In particular, we prove 
a \emph{strong virtual pushforward property} between various relative moduli spaces with $\bp^1$ bundles considered as rigid or non-rigid targets.
The deformation invariance of $\T$-invariance is established in Section~4.

In Sections~5 and 6, $\T$-invariance for ancestor invariants of $\bp^1$ bundle is proved, assuming $\T$-invariance for the base. 
These results can be viewed as the ancestor version of the corresponding \emph{quantum Leray--Hirsch theorem} in genus zero. 
A splitting principle for vector bundles under birational modification (statement (ii) in Section \ref{strategy}) is described in Section \ref{splitting}. 
It is already used there to determine the fiber class type II invariants in terms of the \emph{simple flop case} and the \emph{classical cohomology rings on the base}. 
The non-fiber class type II invariants as well as the rubber invariants are then handled inductively by the strong virtual pushforward property and related results established in Section~3. 
A technically important result is the \emph{inversion of degeneration} argument for ancestors of $\Bbb P^1$ bundles in Section~6 (Proposition \ref{prop:inversion'}).

Section~7 (Proposition \ref{prop:inversion} and Theorem \ref{thm:rel2abs}) proves the statement (iii) mentioned in Section \ref{strategy}. Namely $\T$-invariance for $S$ and $D$ implies $\T$-invariance for the smooth pair $(S, D)$, assuming the $\T$-invariance of the related $\bp^1$ bundles proved in Section~6. 

The last section (Section~8) concludes the proof by reducing to the case of split bundles, whose proof was done in Part II \cite{LLW2}. 
The proof consists of a refined induction procedure on ordinary flops constructed out of the triple $(S, F, F')$ such that $S$ is of the form 
$$S = \Bbb P_T(N_1) \times_T \cdots \times_T \Bbb P_T(N_k)$$ 
with $\{N_i\}$ a finite collection of vector bundles over $T$ and \emph{$F$ and $F'$ are pullbacks from $T$}. 
We proceed by induction on the dimensional of the base $T$. 

The \emph{topological recursion relation} (TRR) for the ancestors in genus zero relative Gromov--Witten theory, whose precise formula is not used in the main text,
is discussed in the appendix.

\begin{remark} \label{r:0.3.1}
By the  comparison results proved in \cite{ACW} and \cite{AMW}, the five different models of genus zero relative invariants are equivalent. 
Although we have employed primarily the logarithmic approach to relative Gromov--Witten invariants \cite{AC, GS},  
results concerning the genus zero relative Gromov--Witten invariants can also be deduced using, for instance, the orbifold approach of Cadman.

In this paper, we freely switch between the usual notations of the relative Gromov--Witten and the log Gromov--Witten perspectives. It is certainly possible to formulate everything in one perspective only. However, the available literature is scarce in this area and it is often easier to make connection with the existing literature by discussing one thing in the relative notation (e.g., the degeneration formula) and another in the log notation (e.g., the perfect obstruction theory).
\end{remark}

\section{Notations and basic facts} \label{s:1}
In this section, we review notations and facts on ordinary flops from Part I and II \cite{LLW1, LLW2}, including the notion of $\T$-invariance on generating functions of GW invariants and the degeneration formula. We also introduce the partial ordering on weighted partitions for relative invariants.

\subsection{Ordinary flops} \label{s:1.1}

\subsubsection{Local models of ordinary flops}
Given a triple $(S, F, F')$, where $S$ is a smooth projective variety over $\cc$ and $F, F'$ two vector bundles of rank $r+1$, we build the local model of ordinary flop $f: X \dasharrow X'$ as follows:
\begin{itemize}

\item $Z = Z(S,F,F')= \bp_S(F)$. There is a natural projection 
\[\phib_S: Z \to S.\]

\item $X = X(S, F, F')= \bp_{Z}(\phib_S^*F' \otimes \oo_Z(-1)\oplus \oo)$ with projection 
\[ p_S:X \to Z.\]

\item Denote by $i_S: Z \to X$ the inclusion which identifies $Z$ as the zero section of $\phib_S^*F' \otimes \oo_Z(-1)$.

\item Using the projective bundle structure $p_S$, we define $\xi=c_1(\mathscr{O}_X(1))$. Similarly, $h = c_1(\mathscr{O}_Z(1))$ for $\phib_S$. $h$ is understood either as a class in $Z$ or its pullback to $X$.

\item (Leray--Hirsch) $H^*(X)$ is generated by $h, \xi$ and (pull-back of) $H^*(S)$. $\xi$ can be represented by the \emph{infinity divisor} disjoint from $Z$.

\item $Z' = Z'(S, F, F')$, $\phib'_{S}$, $X' = X'(S, F, F')$, $p'_S$, $i'_S, h'$ and $\xi'$ are defined as above by switching the roles of $F$ and $F'$. For example,
$$X'(S, F, F') := X(S, F', F).$$

\item
Let $Y = Y(S, F, F')$ be the common blow-up
$$
Y =\bl_Z(X)=\bl_{Z'}(X')
$$ 
with blowdown maps  $\phi_S:Y \to X$ and  $\phi'_S:Y \to X'$.

\item The local model for the ordinary flop associated to $(S,F,F')$ is the birational map
\[f_S: \phi'_S \circ \phi^{-1}_S: X \dashrightarrow Y \to X'.\]

\item The correspondence
$$
\T_S = [\bar\Gamma_{f_S}]_* = {\phi'_S}_{*} \circ \phi_S^* : H^*(X) \to H^*(X')
$$ 
is an isomorphism preserving the Poincar\'e pairing.
\end{itemize}

If the triple $(S, F, F')$ is clear from the context,
we will use the shorthand notation $Z(S), X(S)$.
The letter $Z, Y$ will be used \emph{locally} to stand for other objects, 
but their meaning will be made clear in the context.

When $S$ is a point $\Spec \cc$, we will use $X(\cc)$  to denote 
$$
X(\Spec \cc, \mathscr{O}^{\oplus (r+1)}, \mathscr{O}^{\oplus (r+1)}).
$$

\subsubsection{General ordinary flops}

An ordinary flop $f: X \dashrightarrow X'$ is a birational map which is locally isomorphic to the local models in the neighborhood of the exceptional loci. We summarize these in the following commutative diagram
\begin{equation*} 
\xymatrix{ &
E\;\ar[ld]_>>>>{\,}\ar[rd]|\hole^>>>>{\,}
\ar@{^{(}->}[r]^j &
Y\;\ar[ld]_>>>>{\,}\ar[rd]^>>>>{\,}\\
Z\;\ar[rd]_>>>>>>>>{\phib}\ar@{^{(}->}[r]^i &
X\;\ar[rd]_>>>>>>>>>>{\psi} &
Z'\;\ar[ld]|\hole^>>>>>>>>>>{\phib'} \ar@{^{(}->}[r]^{i'} &
X'\;\ar[ld]^>>>>>>>>{\psi'}\\
&S\;\ar@{^{(}->}[r]^{j'}&\overline{X}& }
\end{equation*}
Note that $\psi$ and $\psi'$ are flopping contractions which contract $Z$ and $Z'$ to $S$ and $S'$ in $\bar{X}$, which is singular. (We will not use $\bar{X}$ except its existence, which guarantees the projectivity of $X'$ from that of $X$ and $\bar X$.)

There are two main differences between a general ordinary flop and its local model.
Obviously, the latter allows simpler topological and geometric description. More importantly for us, from the functorial perspective, the local models have an additional smooth morphism from $X$ to $Z$, and both $X \to Z$, $X \to S$ are fiber bundles with fibers toric manifolds. However, these toric bundles do not come from toric construction. 
Thus, the localization technique does not apply.

\begin{prop} [{\cite[Proposition 3.3]{LLW1}}] \label{p:1.1.1}
To prove Theorem~\ref{t:main}, it is enough to prove the corresponding statements for the local models.
\end{prop}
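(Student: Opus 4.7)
The plan is to use deformation to the normal cone of $Z \subset X$, together with the degeneration formula of Li--Ruan and J.~Li, to reduce the Gromov--Witten theory of a general ordinary flop to that of the local model flop $f_S: X(S,F,F') \dasharrow X'(S,F,F')$. The key geometric input is that the projective completion $\bp_Z(N_{Z/X} \oplus \oo)$ of the normal bundle of $Z$ is canonically the local model.

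First I would form the family $W = \bl_{Z \times \{0\}}(X \times \A^1) \to \A^1$ with generic fiber $X$ and special fiber $Y \cup_E P$, where $Y = \bl_Z X$, $E = \bp_Z(N_{Z/X})$, and $P = \bp_Z(N_{Z/X} \oplus \oo)$. By the defining property of an ordinary flop one has $N_{Z/X} \cong \phib^* F' \otimes \oo_Z(-1)$, so $P$ is canonically $X(S,F,F')$. The parallel construction on $X'$ produces a special fiber $Y \cup_E P'$ with $P' = X'(S,F,F')$; here $Y$ is the common blow-up appearing in the diagram of Section~\ref{s:1.1} and $E = Z \times_S Z'$ is intrinsic. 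The restriction of $f$ to the $P$-component is precisely the local model flop $f_S$.

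Next I would apply the degeneration formula to express $\langle \alpha \rangle^X_\beta$ and $\langle \T\alpha \rangle^{X'}_{\T\beta}$ as sums of products of relative invariants of $(Y,E)$ with those of $(P,E)$, respectively $(P',E)$. Since $Y$ is literally the same space on both sides, and since the discrepancy $\phi^*\alpha - \phi'^*\T\alpha$ is supported on the exceptional divisor (hence contributes only to the boundary data along $E$), after matching the distributions of classes across the two components the comparison of $\langle \alpha\rangle^X$ with $\langle \T\alpha\rangle^{X'}$ reduces to the comparison of $(P,E)$-relative invariants with $(P',E)$-relative invariants. This last comparison is exactly $\T_S$-invariance of the local model flop at the level of relative primary invariants and relative ancestors, which is precisely the statement of Theorem~\ref{t:main} applied to the local model and is assumed as hypothesis. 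Combined with the reconstruction results of \cite{LLW1}, this establishes $\T$-invariance for the full big quantum ring, relative primary invariants, and relative ancestors on $X$.

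The main obstacle is the compatibility bookkeeping for the degeneration formula. The curve class decomposition $\beta = \beta_Y + \beta_P$ must correspond to $\T\beta = \beta_Y + \T_S \beta_P$; the weighted partitions along $E$ must be selected $\T$-equivariantly using the fiber product structure $E = Z \times_S Z'$; and the analytic continuation in the extremal Novikov variable of $X$ must be identified with that of the local model. The last of these is immediate since the extremal ray is by definition a fiber line of $\phib: Z \to S$ and lies entirely on the $P$-component; but the combinatorial matching of partitions, insertion distributions, and curve class splittings between the $Y$ and $P$ sides requires genuine technical work, and is where the bulk of the reduction argument lives.
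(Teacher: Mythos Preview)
Your proposal is correct and follows essentially the same approach as the paper. The statement is cited from \cite[Proposition~3.3]{LLW1} rather than proved here, but the paper's strategy outline (Section~\ref{strategy}) confirms that the reduction in Part~I proceeds exactly as you describe: a degeneration argument via deformation to the normal cone of $Z\subset X$, identifying $P=\bp_Z(N_{Z/X}\oplus\oo)$ with the local model $X(S,F,F')$ and the blow-up $Y=\bl_Z X$ with the common resolution, together with reconstruction results to handle the bookkeeping you flag in your final paragraph.
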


\emph{We will henceforth assume $X$ and $X'$ are the local models.}

\subsection{Generating functions of Gromov--Witten invariants} \label{s:1.2}

\subsubsection{Curve classes}
We first discuss the notation of curve classes in our generating functions. 

Given $X(S) \to Z(S) \to S$, denote by $\gamma$ the curve class of the fiber of $X(S) \to Z(S)$, and $\ell$ the curve class of the fiber of $Z(S) \to S$. When $S$ is a point, 
\[
N_1(X(S)) \simeq \zz \gamma \oplus \zz \ell.
\] 
If a curve class is effective, its $\gamma$ coefficient is non negative. Similarly we have $\gamma'$ and $\ell'$ for $X'(S) \to Z'(S) \to S$.

Given a curve class $\beta_S \in H_2(S,\mathbb{Z})$ and integer $d$, we say $\beta \in H_2(X,\mathbb{Z})$ \emph{belongs to} $(\beta_S, d)$ if $\int_\beta \xi = d$, and $\beta$ projects to $\beta_S$ under $X(S)\to S$. If $\beta$ belongs to $(\beta_S, d)$, then any other element belonging to $(\beta_S, d)$ is of the form $\beta+ k\ell$ for some integer $k$. Straightforward modification of the above definition applies to $X'$. (Notice that $d$ was called $d_2$ in Part I, II \cite{LLW1, LLW2}.)

We will refer to $(\beta_S, d)$ as \emph{curve classes (modulo the extremal ray)}. $(\beta_S, d) = (0, 0)$ are the \emph{extremal} classes. Other classes are called \emph{non-extremal}.

It is proved in Part I \cite[Lemma 3.5]{LLW1}, that  $\beta$ belongs to $(\beta_S, d)$ if and only if $\T(\beta)$ belongs to $(\beta_S, d)$.

\subsubsection{Gromov--Witten invariants} \label{GWI}
We consider only \emph{genus zero} GW invariants.

Let $\mbar_{0,n}(X, \beta)$ be the moduli stack of stable maps from $n$-pointed, genus zero pre stable curves to $X$ with curve class $\beta \in H_2(X)$.

We have evaluation maps
\begin{equation} \label{e:ev}
\ev_j: \mbar_{0, n}(X, \beta) \to X
\end{equation}
for each $1 \le j \le n$. When $n \ge 3$, there is a stabilization map 
\[ 
\st: \mbar_{0, n}(X, \beta) \to \mbar_{0,n}.
\]
The notation 
\[
\Big\langle~
\prod_{j=1}^n \taub_{k_j}(\alpha_j) ~\Big\rangle_\beta^X =
\int_{[\mbar_{0,n} (X, \beta)]^{\vir}} \prod_{j}\ev_j^*(\alpha_j)
\st^*(\prod_j \psi_j^{k_j})
\]
stands for the \emph{ancestor invariant},
where $k_j$'s are nonnegative integers,  $\alpha_j \in H^*(X)$, and $\psi_j \in H^2(\mbar_{0, n})$ the cotangent 
class for the marked point labeled by $j$.

We also define the cycle
\begin{equation} \label{cycle}
\llangle~ \prod_{j=1}^n \taub_{k_j}(\alpha_j)  ~\rrangle_\beta^X
 :=  [\mbar_{0,n} (X, \beta)]^{\vir} \cap \prod_{j}\ev_j^*(\alpha_j)
   \cap  \st^*(\prod_j \psi_j^{k_j}).
\end{equation}
Note that 
\[
\st_*\Big(\llangle~ \prod_{j=1}^n \taub_{k_j}(\alpha_j)  ~\rrangle_\beta^X \Big)
\] is  a nonzero cycle on $\mbar_{0,n}$ only if
\[
n-3 \ge \sum k_j.
\]

Similar notations are used for relative invariants. Let $B$ be a smooth divisor in $X$, $\mu$ a partition of $\int_\beta B$, and $\mbar_{0,n+m} (X, B;\beta,\mu)$ the moduli stack of relative stable maps to $(X, B)$ with indicated discrete contact data. Here $m = l(\mu)$ is the number of contact points in $B$.

When $n + m \ge 3$, by abusing  notation, we also use $\st$ to denote the stabilization map 
\begin{equation} \label{e:st}
 \st: \mbar_{0, n+m}(X, B;\beta,\mu) \to \mbar_{0, n+m} .
\end{equation}
In addition to the evaluation maps \eqref{e:ev} for the internal marked points, we have the evaluation maps for the relative marked point
\begin{equation} \label{e:rel-ev}
 \ev^B_i =: \ev_i : \mbar_{0, n+m}(X, B;\beta,\mu) \to B, \quad i= n+1, \ldots, n+m.
\end{equation}
\emph{We will abused the notation and dropped the superscript $B$ when there is no confusion.}

For $\alpha_j \in H^*(X), \delta_i \in H^*(B)$, non-negative integers $\{k_j\}$ and $\{l_i\}$, we use 
\[
\Big\langle~
\prod_{j=1}^n \taub_{k_j}(\alpha_j) \mid \prod_{i=1}^m\taub_{l_i}(\delta_i)
~\Big\rangle_\beta^{(X, B)} 
\] 
to represent the relative invariant 
\[
\int_{[\mbar_{0,n+m} (X, B; \beta,\mu)]^{\vir}} \prod_{j}\ev_j^*(\alpha_j) 
\prod_{i}\ev_i^*(\delta_i)
\st^*(\prod_j \psi_j^{k_j} \prod_i \psi_i^{l_i}),
\]
and 
\begin{equation} \label{rel-cycle}
\llangle~ \prod_{j=1}^n \taub_{k_j}(\alpha_j) \mid \prod_{i=1}^m\taub_{l_i}(\delta_i) ~\rrangle_{\beta, \mu}^{(X, B)}
\end{equation}
the cycle
\[
[\mbar_{0,n+m} (X, B;\beta,\mu)]^{\vir} \cap \prod_{j}\ev_j^*(\alpha_j) \prod_i \ev_i^*(\delta_i)
   \cap \st^*( \prod_j \psi_j^{k_j} \prod_i \psi_i^{l_i}).
\]

The following shorthand notation for relative invariants will also be used:
\[
 \langle~ \mu, \gamma \mid\alpha  ~\rangle_\beta^{(X, B)}, \quad
\langle~ \alpha \mid\vec{\mu} ~\rangle_\beta^{(X, B)}, \quad
 \langle~  \vec{\mu}\mid\alpha  ~\rangle_\beta^{(X, B)}, \quad
 \langle~  \vec{\mu}\mid\alpha \mid\vec{\nu} ~\rangle_\beta^{(X, B)}.
 \]
The first three are all type I invariants and the last one is referred as a type II invariant, which is used when $B = B_0 \coprod B_\infty$ is a disjoint union of two divisors. Here $\alpha$ denote insertions for non-relative marked points, $\gamma$ insertions for relative marked points, and $\vec{\mu}$ a \emph{weighted partition} defined in Section~\ref{s:1.5}, which combines the relative profile $\mu$ and relative insertions. The symbols $\mu,\nu$ or $\lambda$ denote partitions, and $\vec{\mu} = (\mu, \gamma)$ etc.~weighted partitions.

\subsubsection{Generating functions}

A superscript is used to indicate the target and a subscript for the curve class modulo the extremal ray. 
Denote by $\langle~ \alpha ~\rangle ^{X(S)}_{(\beta_S,  d)}$ the generating function 
\[
 \langle~ \alpha ~\rangle ^{X(S)}_{(\beta_S,  d)} = 
 \sum_{ \beta \in (\beta_S, \,d) }\langle~ \alpha ~\rangle ^{X(S)}_\beta q^\beta,
\]
where $\langle~ \alpha ~\rangle ^{X(S)}_\beta$ is the genus zero GW invariant of $X(S)$ with class $\beta$ and insertions $\alpha$. Similar generating functions are used for relative invariants. Given a smooth divisor $D \hookrightarrow S$, \emph{not necessarily connected}, 
\[
  \langle~ \alpha \mid\vec{\nu} ~\rangle ^{(X(S),\, X(D))}_{(\beta_S, \,d)}
 = \sum_{ \beta \in (\beta_S, \,d) } 
    \langle~ \alpha \mid\vec{\nu} ~\rangle ^{(X(S),\, X(D))}_{\beta} q^\beta.
\]
By abuse of the language, we sometimes refer to a generating function as \emph{invariants} when confusion is unlikely to arise.

\subsection{Definition of $\T$-invariance} \label{s:1.3}
Given a triple $(S, F, F')$, we say that \emph{$\T$-invariance holds for $(S,F,F')$}
with curve classes $(\beta_S,d)$ (modulo extremal ray) if for any ancestor insertions $\omega$,
\begin{equation} \label{e:1.3.1}
\T \Big(
\langle~ \omega ~\rangle ^{X(S)}_{(\beta_S, \,d)}\Big)
= \langle~ 
\T(\omega) ~\rangle ^{X'(S)}_{(\beta_S, \,d)}
\end{equation}
after analytic continuation. If $\omega$ is a primary insertion, we require the number of insertions $|\!|\omega|\!| \ge 3$. When the context is clear, we sometimes omit the bundles and say ``$\T$-invariance holds for $S$''.

We explicate the meaning of analytic continuation. As explained earlier, both sides of \eqref{e:1.3.1} are generating functions which, a priori, are formal power series in $q^{\ell}$ and $q^{\ell'}$ respectively. When we say \emph{the equality holds in \eqref{e:1.3.1} after analytic continuation}, we mean that the LHS is an analytic function in $q^{\ell}$,
the RHS is an analytic function in $q^{\ell'}$, and they are equal \emph{as analytic functions} after the identification
\[
 \T q^{\ell} = q^{\T \ell} = (q^{\ell'})^{-1}.
\]
The last equality follows from $\T (\ell) = -\ell'$, which was shown in \cite{LLW}.

We also note that the condition $|\!|\omega|\!| \ge 3$ is necessary for primary insertions. (Ancestors by definition satisfy the condition.) Counterexamples of $\T$-invariance were given in \cite{LLW} for $|\!|\omega|\!| < 3$.

Given a smooth divisor $D \hookrightarrow S$, we say $\T$-invariance holds for $(S,D)$ (bundles omitted) if whenever $|\!|\omega|\!| + l(\vec{\nu}) \ge 3$,
\[
\T\left(
\langle~ \omega \mid\vec{\nu}~\rangle ^{(X(S), \,X(D))}_{(\beta_S, \,d)}\right)
=
\langle~ 
\T(\omega) \mid\T(\vec{\nu})~\rangle ^{(X'(S), \,X'(D))}_{(\beta_S, \,d)}.
\] 
Note that by the product rule, $\T$-invariance for disconnected domain curve also holds when there is no contracted component and each component contains at least 3 insertions.

$\T$-invariance for non-rigid targets, also termed \emph{rubber targets} 
\cite{MP}, can be formulated similarly. The definition of relative invariants for \emph{rubbers} can be found in \cite[Section~2.4]{GV}.

\begin{remark} \label{r:1.3.1}
In fact, for non-extremal $(\beta_S, \,d)$, the condition $|\!|\omega|\!| \ge 3$
is not necessary. For primary invariants, the divisor equation allows us to create extra insertions, upholding $\T$-invariance for absolute and relative invariants for few insertions. More precisely, for $\beta_S \neq 0$, pick an ample divisor $H$ on $S$. Then
\[
 \langle~ \omega ~\rangle ^{X(S)}_{(\beta_S, \,d)} = 
  \frac{1}{\int_{\beta_S} H}
 \langle~ \phi^* H, \omega ~\rangle ^{X(S)}_{(\beta_S, \,d)}.
\]
Since $H$ is a pullback class, 
\[
 \int_{\beta} \phi^*H = \int_{\beta_S} H.
\] 
The key point is that the intersection number of $H$ and any $\beta \in (\beta_S,d)$ is constant and the generating function remains unchanged. By ampleness of $H$ on $S$, $\int_{\beta_S} H \neq 0$. This way, the $\T$-invariance of generating functions with fewer insertions is implied by that with arbitrary number of insertions. If $d \neq 0$, replace $H$ by $\xi$. Since $\T(\xi) =\xi'$, the same argument works.
\end{remark}

\subsection{Some geometric properties of the local models}

\subsubsection{Functoriality}
We view $Z$, $X$, $Z'$, $X'$ as functors over the category of smooth projective schemes (over $S$). Given a map $g: T \to S$, we have the pullback triple $(T, g^*F, g^*F')$. Any relevant diagram constructed from $(T, g^*F, g^*F')$ can be obtained by base change from $T \to S$. For instance, we have
\[
\xymatrix{
X(T) \ar[r]^{p_T} \ar[d]^{g_X} & Z(T) \ar[r]^{\phib_T}\ar[d]^{g_Z} & T\ar[d]^{g}\\
X(S) \ar[r]^{p_S} & Z(S)\ar[r]^{\phib_S} & S.
}\]
Later, for simplicity, we will denote by $|_{X(S)}$ the pull back under $X(S) \to S$. 

Due to the motivic property of $\T$, it is compatible with proper pushforward and pullback. Denote $\T_S$ the correspondence $\T$ modeled on $S$. Given a map $g: T \to S$, we have 
\[
  \T_S \circ {g_X}_*  = {g_{X'}}_* \circ \T_T, \qquad  \T_T \circ g_X^*  
 = g_{X'}^* \circ \T_S.
\]

\subsubsection{$\T$ in terms of a basis and its dual basis.}
Let $\{T_i \}$ be a basis of $H^*(S)$,  $\{ T_i^{\vee} \}$ its dual basis, then 
$$
\{  {T_i}h^j\xi^k \mid 0 \le  j\le r, \, 0\le k\le r+1 \}
$$ 
is a basis of $H^*(X(S))$, with dual basis $\{T_i^{\vee}H_{r-j}\Theta_{r+1-k}\}$. See \cite[Lemma 1.4, Lemma 2.1]{LLW1}. 

We do not need the precise formulas for $H_i$ or $\Theta_j$ in this paper. We only remark that $H_i$'s (resp.~$\Theta_j$'s) are the Chern classes of certain quotient bundles of rank $r$ over $S$ (resp.~rank $r + 1$ over $Z$). In particular they are defined only for $0 \le i \le r$ (resp.~$0 \le j \le r + 1$). For notational convenience, we define $H_i = 0 = \Theta_j$ for $i$, $j$ outside the above range. Similar remarks apply to $H'_i$ and $\Theta'_j$ on the $X'$ side as well.

\begin{prop}[{\cite{LLW1}}] \label{prop:homo}
$\T_S$  is $H^*(S)$-linear and for $j \le r$ or $k \ne 0$,
$$
\T(h^j\xi^k) = (\xi' -h')^j {\xi'}^k.
$$
\end{prop}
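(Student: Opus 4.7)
My plan is to treat the two assertions separately, starting with $H^*(S)$-linearity. The morphisms $\phi_S\colon Y \to X$ and $\phi'_S\colon Y \to X'$ are both morphisms over $S$, so for any $\sigma \in H^*(S)$ with pullbacks $\sigma|_X$ and $\sigma|_{X'}$ via the respective structure maps, one has $\phi_S^*(\sigma|_X) = {\phi'_S}^*(\sigma|_{X'})$ on $Y$. The projection formula then yields
\[
\T_S(\sigma|_X \cdot \alpha) = {\phi'_S}_*\bigl(\phi_S^*\alpha \cdot {\phi'_S}^*(\sigma|_{X'})\bigr) = \sigma|_{X'} \cdot \T_S(\alpha)
\]
for any $\alpha \in H^*(X)$, settling the linearity.

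For the formula $\T(h^j\xi^k) = (\xi' - h')^j{\xi'}^k$, the approach is to compute $\phi_S^*(h^j\xi^k)$ on the common blow-up $Y$ and push it forward via ${\phi'_S}_*$. The key geometric inputs to be assembled on $Y$ are: (i) the exceptional divisor $E$ is identified with $Z\times_S Z'$, since $N_{Z/X}={\phib_S}^*F'\otimes\oo_Z(-1)$ has the same projectivization as ${\phib_S}^*F'$, symmetrically from the $X'$ side; (ii) the infinity divisors of $X\to Z$ and $X'\to Z'$ are disjoint from the respective blow-up centers, so their strict transforms coincide in $Y$, yielding $\phi_S^*\xi = {\phi'_S}^*\xi'$ and hence $\T(\xi^k) = {\xi'}^k$ directly by the projection formula; (iii) a comparison of $p_S^*\oo_Z(1)$ with its presentation on the $X'$ side modulo the exceptional locus gives a relation of the form $\phi_S^*h = {\phi'_S}^*(\xi' - h') + \lambda\cdot[E]$ on $Y$ for some explicit combination $\lambda$.

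Granting (i)--(iii), expand
\[
\phi_S^*(h^j\xi^k) = \bigl({\phi'_S}^*(\xi'-h') + \lambda[E]\bigr)^j \bigl({\phi'_S}^*\xi'\bigr)^k
\]
and compute ${\phi'_S}_*$. The leading piece gives $(\xi' - h')^j{\xi'}^k$; each of the remaining cross-terms carries at least one factor of $[E]$, hence is supported on $E$. A dimension count, using that $E \to Z'$ has relative dimension $r$, shows that these corrections push forward to zero along $\phi'_S$ whenever $j \le r$. Separately, when $k \ne 0$, the infinity-divisor representation of $\xi$ (disjoint from $Z$) allows one to factor out ${\xi'}^k$ cleanly before any $E$-correction enters, removing the $j \le r$ restriction in this case.

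The main obstacle will be the bookkeeping in (iii), and in particular certifying the vanishing of the $[E]$-correction terms under ${\phi'_S}_*$ in exactly the stated range. Outside this range (namely $j > r$ and $k = 0$), the failure of the displayed formula is genuine rather than a defect of the argument: the monomial $h^{r+1}$ must first be rewritten via the Chern class relation $\sum c_i(F)\, h^{r+1-i} = 0$ in $H^*(X)$, producing a polynomial in $(\xi'-h')$ of degree $\le r$ with coefficients in $c_i(F)$, which does \emph{not} equal $(\xi'-h')^{r+1}$ in $H^*(X')$ because the analogous reduction on the $X'$ side involves $c_i(F')$ rather than $c_i(F)$. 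In the excluded range, $\T$ is then determined on the Leray--Hirsch basis by combining $H^*(S)$-linearity from (a) with the Chern class relations on $X$.
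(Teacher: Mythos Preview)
The paper does not prove this proposition; it is quoted from Part~I \cite{LLW1}. Your outline is the standard argument and matches what is done there.

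A few remarks that would sharpen the sketch. For (iii) the precise relation is
\[
\phi_S^*h = {\phi'_S}^*(\xi'-h') - [E];
\]
one pins down the coefficient $\lambda=-1$ by restricting to $E\cong Z\times_S Z'$, where $[E]|_E = -(h+h')$, $\phi_S^*h|_E=h$, ${\phi'_S}^*\xi'|_E=0$, and ${\phi'_S}^*h'|_E=h'$. With this in hand the binomial expansion produces correction terms ${\phi'_S}_*\bigl([E]^i\cdot{\phi'_S}^*(\cdots)\bigr)$ for $1\le i\le j$, and since $(\phi'_S|_E)_*$ kills classes of fiber degree $<r$ in the $\mathbb{P}^r$-bundle $E\to Z'$, these vanish for $i\le r$---exactly your dimension count.

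For the case $k\ne 0$, your idea is right but can be made cleaner: represent $h^j\xi^k$ by a cycle supported on the infinity divisor $D_\infty\cong Z\times_S Z'\subset X$. Its strict transform in $Y$ lies in $Y\setminus E$ and coincides with the infinity divisor $D'_\infty\subset X'$ under $\phi'_S$. On this common copy of $Z\times_S Z'$ one has $\xi|_{D_\infty}=h+h'=\xi'|_{D'_\infty}$, whence $h|_{D_\infty}=(\xi'-h')|_{D'_\infty}$, and the identity $\T(h^j\xi^k)=(\xi'-h')^j{\xi'}^k$ follows for all $j$ without any $[E]$-bookkeeping. The one step you assert without argument---that the two infinity divisors genuinely coincide as subvarieties of $Y\setminus E$---deserves a line, via the explicit description of the isomorphism $X\setminus Z\cong X'\setminus Z'$.
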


Since $\T$ preserves the Poincar\'e pairing, this implies
\begin{cor} \label{cor:hom}
There are constants $C(i, j, k)$ such that 
$$
\T(H_j\Theta_k)=\sum_{m=0}^{j+k} C(j,k,m)H'_m\Theta'_{j+k-m}.
$$
\end{cor}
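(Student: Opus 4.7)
The plan is to deduce the corollary from Proposition \ref{prop:homo} by an $H^*(S)$-equivariant Poincar\'e-duality argument.

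First, I would upgrade the $\mathbb{Q}$-valued Poincar\'e-pairing preservation of $\T$ to an $H^*(S)$-valued analogue. Using the $H^*(S)$-linearity of $\T$ (from Proposition \ref{prop:homo}) together with the projection formula, for any $\gamma \in H^*(S)$ and $\alpha, \beta \in H^*(X)$,
\[
\int_S \gamma \cdot \pi'_*\bigl(\T\alpha \cup \T\beta\bigr)
\;=\; \int_{X'} \T(\gamma\alpha)\cup \T\beta
\;=\; \int_X (\gamma\alpha)\cup\beta
\;=\; \int_S \gamma \cdot \pi_*(\alpha\cup\beta).
\]
Non-degeneracy of Poincar\'e duality on $S$ then yields $\pi'_*(\T\alpha \cup \T\beta) = \pi_*(\alpha \cup \beta)$, so $\T$ is an $H^*(S)$-linear isometry with respect to the $H^*(S)$-valued fiber pairing $\langle \alpha, \beta \rangle_{X/S} := \pi_*(\alpha\cup\beta)$.

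Next, by construction of $H_j$ and $\Theta_k$ as Leray--Hirsch dual basis elements (see [LLW1, Lemma 1.4, Lemma 2.1]), the $H^*(S)$-module basis $\{h^j \xi^k : 0 \le j \le r,\, 0 \le k \le r+1\}$ of $H^*(X)$ has $H^*(S)$-dual basis $\{H_{r-j} \Theta_{r+1-k}\}$ under $\langle\cdot,\cdot\rangle_{X/S}$. Applying the isometry $\T$ with $\T(h^j \xi^k) = (\xi'-h')^j (\xi')^k$, the image $\T(H_{r-j} \Theta_{r+1-k})$ is forced to be the $H^*(S)$-dual in $H^*(X')$ of the basis $\{(\xi'-h')^j (\xi')^k\}$. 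I then compute this dual explicitly: using the binomial expansion $(\xi'-h')^j (\xi')^k = \sum_s \binom{j}{s}(-1)^s (h')^s (\xi')^{j+k-s}$ together with the known dual pair $\{(h')^a (\xi')^b\} \leftrightarrow \{H'_{r-a} \Theta'_{r+1-b}\}$ on $X'$, the inverse-transpose of the transition matrix expresses $\T(H_{r-j} \Theta_{r+1-k})$ as a linear combination of $\{H'_m \Theta'_n\}$. Since $\T$ preserves the grading, each term must satisfy $m + n = (r-j) + (r+1-k)$. Re-indexing $(j, k) \mapsto (r-j, r+1-k)$ yields $\T(H_j \Theta_k) = \sum_m C(j, k, m) H'_m \Theta'_{j+k-m}$.

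The main obstacle is verifying that the coefficients $C(j, k, m)$ are genuinely rational, not $H^*(S)$-valued. When $j + k > r+1$, reducing $(\xi')^{j+k-s}$ to the basis range via the projective-bundle relation of $H^*(X')$ introduces Chern classes of $F$ and $F'$ into the transition matrix, producing $H^*(S)$-valued entries. The crux is to show that these corrections cancel in the inverse-transpose that gives the dual basis—a Cramer-type identity reflecting the compatibility of the Chern-class relations with the dual-basis structure on $X'$. A clean way to organize this is by naturality: the entire identity is functorial in $S$, so it suffices to verify it on the universal classifying base where $F, F'$ are tautological, reducing the rationality of $C(j, k, m)$ to a combinatorial Chern-class computation in the double projective bundle.
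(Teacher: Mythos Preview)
Your overall strategy---exploiting that $\T$ is an $H^*(S)$-linear isometry for the fiber-integral pairing and then reading off $\T(H_j\Theta_k)$ as the dual-basis transform of $\{(\xi'-h')^j(\xi')^k\}$---is the same mechanism the paper uses, which also deduces the corollary from Proposition~\ref{prop:homo} via Poincar\'e duality. The divergence is entirely in how the rationality of the coefficients is secured.

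You correctly isolate the obstacle: the change-of-basis matrix from $\{(h')^a(\xi')^b\}$ to $\{(\xi'-h')^j(\xi')^k\}$ acquires $H^*(S)$-valued entries through the projective-bundle relations whenever reduction is needed, and it is not evident that these cancel in the inverse-transpose. Neither of your proposed fixes closes this gap. The ``Cramer-type identity'' is asserted, not exhibited. The naturality reduction to the universal base $BG$ does not help either: over $BG$ the Chern classes of the tautological bundles are algebraically independent polynomial generators of $H^*(BG)$, so the transition matrix there still has genuine $H^{>0}(BG)$-entries, and you are left with exactly the same question of why the inverse-transpose lands in $\mathbb{Q}\subset H^*(BG)$. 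Passing to the universal case only rephrases the problem.

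The paper bypasses this with a single device you are missing: multiply by the point class $\mathrm{pt}\in H^{\mathrm{top}}(S)$. Since every $c_i(F), c_i(F')$ with $i\ge 1$ has positive degree in $H^*(S)$, the products $\mathrm{pt}\cdot c_i$ vanish, so the Chern-polynomial relations, after cupping with $\mathrm{pt}$, become relations among $\{\mathrm{pt}\cdot (h')^m(\xi')^n\}$ with \emph{rational} coefficients. One obtains
\[
\T(\mathrm{pt}\cdot h^j\xi^k)=\sum_{m} C(j,k,m)\,\mathrm{pt}\cdot(h')^m(\xi')^{j+k-m},\qquad C(j,k,m)\in\mathbb{Q},
\]
and Poincar\'e duality on $X(S)$---pairing the $\mathrm{pt}$-block $\{\mathrm{pt}\cdot h^j\xi^k\}$ against the $1$-block $\{H_{r-j}\Theta_{r+1-k}\}$ of the dual basis---then delivers the corollary. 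Rather than arguing that the $H^*(S)$-valued corrections cancel, the paper simply annihilates them at the outset.
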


\begin{proof}
Let ${\rm pt}$ be the point class in $H^*(S)$. Then
$$
\T ({\rm pt}.h^j\xi^k) = {\rm pt}.(\xi' -h')^j {\xi'}^k = \sum_{m = 0}^j C^j_m (-1)^m {\rm pt}.{h'}^{j - m}{\xi'}^{m + k}. 
$$
The Chern polynomial equations allow us to rewrite the sum as
$$
\sum_{m = 0}^{r} C(j, k, m)\, {\rm pt}.{h'}^m {\xi'}^{j + k - m},
$$
where $C(j , k, m) = 0$ if $j + k - m < 0$ or $j + k - m > r + 1$. 
The result follows by Poincar\'e duality.
\end{proof}

\subsubsection{$\T$-homogeneous basis}
Let 
$$
G = GL_{r+1} \times GL_{r+1},
$$ 
then the pair of vector bundles $(F, F')$ determines a map 
\[S \to BG = BGL_{r+1} \times BGL_{r+1}  ,\]
where $BG$ is the \emph{classifying stack of $G$ bundles}. We have a  cartesian diagram
\[
\xymatrix{
X(S) \ar[r]\ar[d] &  [X(\cc)/G]\ar[d] \\
S \ar[r] & BG.
}
\]
Let $\{T_i\}$ be a basis of $H^*(S)$. Let  $\{ V_m\}$ be classes on $[X(\cc)/G]$ such that the pullbacks of $V_m$'s to $X(\cc)$ via $X(\cc) \to [X(\cc)/G]$ form a basis of $H^*(X(\cc))$. Then $\{ T_iV_m\}$ form a basis of $H^*(X(S))$ by Leray--Hirsch theorem. 
Here $T_i, V_m$ are viewed as classes on $X(S)$ via $X(S) \to S$ and $X(S) \to [X(\cc)/G]$ respectively. Note that the classes ``$h, \xi, H_j$ and  $\Theta_k$'' are indeed defined on $[X(\cc)/G]$, and their pullbacks are the classes  $h, \xi, H_j$ and  $\Theta_k$ in $H^*(X(S))$.

\begin{defn} \label{def:hom}
A $\T$-homogeneous basis for $X(S,F,F')$ is a basis of its cohomology of the form $\{ T_iV_m \}$, where $T_i$ are homogeneous classes on S, $V_m$ are homogeneous classes on $[X(\cc)/G]$. We further require
\begin{enumerate}
\item $\{T_i\}$ is a basis of $H^*(S)$. 

\item $V_m$ is a linear combination of $\{ h^j\xi^k\}_{\{0 \le j \le r,\, 0\le k \le r+1\}}$, and the pullbacks of $V_m$'s to $X(\cc)$ form a basis of $H^*(X(\cc))$.
\end{enumerate}
In this case, we say $\{T_iV_m\}$ is compatible with $\{T_i\}$.

As $X'(S, F, F') = X(S, F', F)$. A $\T$-homogeneous basis for $X'(S, F, F')$ is by definition a  $\T$-homogeneous basis for $X(S, F', F)$.
\end{defn}

\begin{lemma}
If $\{T_iV_m\}$ is a $\T$-homogeneous basis for $X(S)$, then $\{T_i\T(V_m)\}$ is a $\T$-homogeneous basis for $X'(S)$.
\end{lemma}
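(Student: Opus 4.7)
The plan is to verify the two defining conditions of Definition \ref{def:hom} for the collection $\{T_i\,\T(V_m)\}$ viewed as a basis of $H^*(X'(S))$. Since $\T_S$ is an isomorphism preserving the Poincar\'e pairing and, by Proposition \ref{prop:homo}, is $H^*(S)$-linear, we have $\T_S(T_i V_m) = T_i\,\T_S(V_m)$, so $\{T_i\,\T(V_m)\}$ is automatically a basis of $H^*(X'(S))$. Condition (1) on the $T_i$'s is inherited from the hypothesis, so it remains to check that each $\T(V_m)$ is a linear combination of $\{h'^j {\xi'}^k\}_{0\le j\le r,\,0\le k\le r+1}$ and that the pullbacks $\T(V_m)|_{X'(\cc)}$ form a basis of $H^*(X'(\cc))$.

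For the first, I would write $V_m=\sum c_{j,k}\,h^j\xi^k$ in the prescribed range. Every such pair $(j,k)$ satisfies $j\le r$ or $k\ne 0$, so Proposition \ref{prop:homo} gives $\T(V_m)=\sum c_{j,k}\,(\xi'-h')^j{\xi'}^k$. Expanding as a polynomial in $h',\xi'$ produces monomials $h'^a{\xi'}^b$ with $a\le r$ but possibly with $b$ as large as $2r+1$. On $X'(\cc)$ the bundles $F$ and $F'$ are trivial, so the relevant Chern polynomial relations collapse to $h'^{r+1}=0$ and $\xi'(\xi'-h')^{r+1}=0$, both with scalar coefficients; using them to eliminate the out-of-range powers of $\xi'$ produces the desired $\cc$-linear combination of $\{h'^a {\xi'}^b\}_{0\le a\le r,\,0\le b\le r+1}$.

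For the basis statement, I would invoke the functoriality of $\T$ recorded in Section \ref{s:1.1}, namely $\T\circ g^*=g^*\circ\T$ under base change. Pulling back along a geometric point $\Spec\cc\to S$ (where the triple becomes $(\cc,\oo^{r+1},\oo^{r+1})$) yields $\T(V_m)|_{X'(\cc)} = \T_\cc\bigl(V_m|_{X(\cc)}\bigr)$; since $\T_\cc$ is an isomorphism and $\{V_m|_{X(\cc)}\}$ is a basis by hypothesis, its image is a basis of $H^*(X'(\cc))$. The one step requiring care is the reduction in the preceding paragraph: it must be performed after restricting to $X'(\cc)$ rather than on $[X'(\cc)/G]$, because the universal Chern polynomial on the latter has $H^*(BG)$-valued coefficients that would spoil the scalar-linear-combination statement. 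Once that subtlety is borne in mind, the rest is bookkeeping.
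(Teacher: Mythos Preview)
Your argument is correct and follows the same route as the paper's one-line proof (``This follows from Proposition \ref{prop:homo}''), but with considerably more care. You verify each clause of Definition \ref{def:hom}: the $H^*(S)$-linearity of $\T_S$ gives that $\{T_i\T(V_m)\}$ is a basis, condition (1) is inherited, and for condition (2) you invoke Proposition \ref{prop:homo} and functoriality of $\T$ under base change to a point.

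You also spot a genuine wrinkle the paper glosses over: expanding $(\xi'-h')^j\xi'^k$ yields $\xi'$-exponents up to $2r+1$, outside the stipulated range $0\le k\le r+1$. Your fix---reduce modulo the scalar relations $h'^{r+1}=0$ and $\xi'(\xi'-h')^{r+1}=0$ valid on $X'(\cc)$---is the right idea, but note that it does not literally recover $\T(V_m)$ as a class on $[X'(\cc)/G]$: the reduced expression and $\T(V_m)$ agree only after restriction to $X'(\cc)$, since the equivariant relations carry $H^*(BG)$-coefficients. So condition (2), read verbatim on $[X'(\cc)/G]$, may fail for $\T(V_m)$ itself. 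This is best viewed as a minor imprecision in the definition rather than a defect in your proof; as the paper remarks, in practice only the basis $\{T_i\T(h^j\xi^k)\}$ is used, and the property that actually matters downstream (the form of the dual basis in the subsequent lemma) follows directly from Corollary \ref{cor:hom}.
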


\begin{proof}
This follows from Proposition \ref{prop:homo}.
\end{proof}

The significance of requiring $V_m$ to be linear combinations of $\{ h^j\xi^k\}$ is stated in the following lemma.

\begin{lemma}
If $\{T_iV_m\}$ is a $\T$-homogeneous basis for $X(S)$, then its dual basis is of the form $\{T_i^{\vee}W_m\}$. Here $\{T_i^{\vee}\}$ is the dual  basis of $\{T_i\}$, $W_m$ is a linear combination of $\{H_{r-j}\Theta_{r+1-k}\}$, and when pulled back to $X(\cc)$,
$\{W_m\}$ is the dual basis of $\{V_m\}$.
\end{lemma}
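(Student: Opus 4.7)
The plan is to deduce this from the already-cited fact (Lemma 1.4 and Lemma 2.1 of \cite{LLW1}) that $\{T_i h^j \xi^k : 0 \le j \le r,\, 0 \le k \le r+1\}$ is a basis of $H^*(X(S))$ whose dual basis (under the Poincaré pairing on $X(S)$) is precisely $\{T_i^\vee H_{r-j}\Theta_{r+1-k}\}$. The change of fiber basis from $\{h^j \xi^k\}$ to $\{V_m\}$ is encoded by a square matrix, and the desired dual basis $\{W_n\}$ will be obtained by applying the inverse transpose of this matrix to $\{H_{r-j}\Theta_{r+1-k}\}$.

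First, expand each $V_m$ in the fiber basis as $V_m = \sum_{j,k} a_{m,jk}\, h^j \xi^k$, with scalars $a_{m,jk}$ indexed by $(j,k)$ with $0 \le j \le r$ and $0 \le k \le r+1$. Since the pullbacks of $h^j \xi^k$ to $X(\cc)$ form a basis of $H^*(X(\cc))$, the hypothesis that $\{V_m|_{X(\cc)}\}$ is also a basis forces the square matrix $A := (a_{m,jk})$ to be invertible. Setting $(b_{n,jk}) := (A^\top)^{-1}$, define
\[
W_n := \sum_{j,k} b_{n,jk}\, H_{r-j}\Theta_{r+1-k},
\]
which by construction is a linear combination of $\{H_{r-j}\Theta_{r+1-k}\}$, as required.

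To verify the duality on $X(S)$, expand bilinearly and invoke the recalled dual-basis relation:
\[
\int_{X(S)} T_i V_m \cdot T_{i'}^\vee W_n
= \sum_{j,k,j',k'} a_{m,jk}\, b_{n,j'k'} \int_{X(S)} T_i h^j \xi^k \cdot T_{i'}^\vee H_{r-j'}\Theta_{r+1-k'}
= \delta_{ii'} \sum_{j,k} a_{m,jk}\, b_{n,jk} = \delta_{ii'}\delta_{mn}.
\]
For the pullback statement, the classes $h, \xi, H_j, \Theta_k$ are defined on $[X(\cc)/G]$, hence the coefficients $(a_{m,jk})$ and $(b_{n,jk})$ remain unchanged after pullback to $X(\cc)$ along $X(\cc) \to [X(\cc)/G]$. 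Specializing the $X(S)$ dual-basis identity to $S = \Spec \cc$ yields $\int_{X(\cc)} h^j \xi^k \cdot H_{r-j'}\Theta_{r+1-k'} = \delta_{jj'}\delta_{kk'}$, and the identical linear-algebra calculation gives $\int_{X(\cc)} V_m|_{X(\cc)} \cdot W_n|_{X(\cc)} = \delta_{mn}$, so $\{W_n|_{X(\cc)}\}$ is the dual basis of $\{V_m|_{X(\cc)}\}$.

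The argument is essentially a linear-algebra unpacking once the Leray--Hirsch dual basis on $X(S)$ is in hand, so I do not expect a substantial obstacle. The two points that deserve care are (a) that the invertibility of $A$ is genuinely equivalent to the hypothesis that the pullbacks of $V_m$ to $X(\cc)$ form a basis, and (b) that the universal origin of $h, \xi, H_j, \Theta_k$ on $[X(\cc)/G]$ ensures that the very same coefficient matrices $A$ and $(A^\top)^{-1}$ control both the $X(S)$ and $X(\cc)$ duality computations.
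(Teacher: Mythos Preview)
Your argument is correct and is essentially what the paper intends, only spelled out in full. The paper's one-line proof cites Corollary~\ref{cor:hom}, but the genuine input is the dual-basis relation $\{T_i h^j\xi^k\}\leftrightarrow\{T_i^\vee H_{r-j}\Theta_{r+1-k}\}$ from \cite[Lemma~1.4, Lemma~2.1]{LLW1}, which you correctly identified and used; the rest is the inverse-transpose linear algebra you carried out, together with the observation that the coefficients $a_{m,jk}$ are scalars independent of $S$ (since $V_m$ lives on $[X(\cc)/G]$), so specializing to $S=\Spec\cc$ is legitimate.
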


\begin{proof}
This follows from Corollary \ref{cor:hom}.
\end{proof}

By abuse of notation, we will write the dual basis of $\{T_iV_m\}$ as 
\[
\{T_i^{\vee}V_m^{\vee}\},
\] 
although $V_m^{\vee}$ is \emph{not} the dual of $V_m$ in $[X(\cc)/G]$.

The abstraction to  $\T$-homogeneous basis serves to streamline arguments involving the choice of a basis. In practice, we only need the particular $\T$-homogeneous basis $\{T_ih^j\xi^k \}$ for $X(S)$ and $\{T_i\T(h^j\xi^k)\}$ for $X'(S)$.

\subsection{Weighted partitions and partial orderings} \label{s:1.5}

\subsubsection{Weighted partitions} \label{s:3.1.2} 

As above, let $(S, D)$ be a smooth pair. We will order relative invariants with respect to a chosen $\T$-homogeneous basis for $X(D)$.

Let $\{ \delta_aV_m \}$ be a $\T$-homogeneous basis of $H^*(X(D))$ compatible with $\{\delta_a\}$. Consider generating functions of the form 
\begin{equation} \label{e:1.5.1}
\langle~ \alpha \mid\{ (\mu_i,\delta_{a_i}V_{m_i})\}~\rangle^{(X(S),\, X(D))}_{(\beta_S,\, d)},
\end{equation}
where $\alpha$ denotes a sequence of primary insertions with length $|\!|\alpha|\!|$.

Following \cite{MP}, we call the contact data
\[
\vec{\mu} := \{ ( \mu_1 , \delta_{a_1}V_{m_1}), \cdots, (\mu_{l(\mu)} , \delta_{a_{l(\mu)}} V_{m_{l(\mu)}} ) \}
\]
a \emph{(cohomology) weighted partition}. Here $l(\mu)$ is the length of the relative profile $\mu$, which is called the number of contact points $\rho$ in \cite{LLW, LLW1, LLW2}. We emphasis that the cohomology classes in a weighted partition
are always chosen from a $\T$-homogeneous basis.

Some notations will be used throughout the paper. For a weighted partition $\vec{\mu} = \{ ( \mu_i , \delta_{a_i}V_{m_i}) \}$, define 
\begin{equation} \label{k-num}
 k(\vec{\mu}) :=\max\{0, \sum_{\{i \,\mid\, \mu_i >1\}}\mu_i - \on{Id}(\vec{\mu}) \},
\end{equation}
where 
\[
 \on{Id}(\vec{\mu}) := \#\{i \mid (\mu_i, \deg\delta_{a_i},  \deg V_{m_i}) = (1, 0, 0) \}.
 \]
$k(\vec{\mu})$ is the number of extra divisorial insertions to ensure the existence of $\mbar_{0,n+m}$ in \eqref{e:st}
and hence that of the corresponding ancestors.
It will be used in Sections~6 and 7. See, for example,  Proposition~\ref{prop:inversion'}.

We use $\deg_D(\vec{\mu})$ to denote the total degree of the relative insertions
\[
 \deg_D(\vec{\mu}) :=\sum \deg{\delta_{a_i}}.
\]

\subsubsection{Partial orderings} \label{s:1.5.2}

This is a variant of the partial ordering in \cite{MP}.

Given $\vec \mu$, the set $\{( \mu_i, \deg \delta_{a_i},  \deg V_{m_i}) \in \Bbb N \times \Bbb Z_{\ge 0}^2 \mid i = 1, \ldots, l(\vec \mu)\}$ can be rearranged in the decreasing lexicographical order. For two such sets associated to $\vec \mu$ and $\vec \mu'$ with the same $l(\vec \mu) = l(\vec \mu')$, we say that 
\[
 \{( \mu_i, \deg \delta_{a_i},  \deg V_{m_i})\} >_l
 \{( \mu'_i, \deg \delta_{a'_i},  \deg V_{m_i'})\}
\]
if, after placing them in the decreasing lexicographical order, the first triple for which they differ is greater for $\{( \mu_i, \deg \delta_{a_i},  \deg V_{m_i}) \}$.

Now we define a \emph{partial ordering $\succ$ on the weighted partition $\vec{\mu}$}
by lexicographic order on the triple
$$
(\deg_D(\vec{\mu}), l(\mu), \{( \mu_i, \deg \delta_{a_i},  \deg V_{m_i})\} ),
$$
where 
\begin{itemize}
\item[(i)] For $\deg_D(\vec{\mu})$, smaller number corresponds to higher order in $\succ$,
\item[(ii)] For $l(\mu)$, larger number corresponds to higher order in $\succ$,
\item[(iii)] For $\{ ( \mu_i, \deg \delta_{a_i},  \deg V_{m_i}) \}$, lower order in $>_l$ corresponds to higher order in $\succ$.
\end{itemize}

Based on it, \emph{generating functions} of the above form \eqref{e:1.5.1} can be partially ordered lexicographically by the triple 
$$
((\beta_S, d), |\!|\alpha|\!|, \vec{\mu}):
$$
\begin{enumerate}
\item Curve class $(\beta_S, \,d)$: $(\beta_S, \,d) > (\beta_S', d')$ if $\beta_S - \beta_S' > 0$, or $\beta_S =\beta_S'$ and $d > d'$. 
\item Number of internal insertions $|\!| \alpha|\!|$: more insertions corresponds to higher order.
\item $\vec{\mu} = \{ ( \mu_i , \delta_{a_i}V_{m_i}) \}$ are ordered by $\succ$ defined above.
\end{enumerate}


A partial ordering on a set satisfies the \emph{DCC} if any descending chain has only finite length. 
DCC is an essential condition for the induction process. 
The partial ordering $\succ$ on the set of all weighted partitions does not satisfy DCC. 
However, the partial ordering on the set of generating functions does, as the following lemma shows.

\begin{lemma}
The partial ordering on the generating functions of (primary) relative GW invariants (\ref{e:1.5.1}) associated to a fixed triple $(S, F, F')$
satisfies the DCC. 
\end{lemma}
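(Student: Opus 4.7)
The plan is to unwind the lexicographic structure and verify DCC component by component. The partial order is lex on the triple $((\beta_S, d), |\!|\alpha|\!|, \vec{\mu})$, with $\vec{\mu}$ itself lex-ordered by $\deg_D(\vec{\mu})$, $l(\mu)$, and the multiset of triples $(\mu_i, \deg\delta_{a_i}, \deg V_{m_i})$. A descending chain in a lex order forces stabilization of the higher-priority components before the lower ones can vary, so DCC reduces to checking, for each component in turn (with higher-priority data fixed), that the direction of descent is bounded or lands in a finite set.

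For the $(\beta_S, d)$ component, effectivity of $\beta_S$ is the key. Fixing an ample divisor $H$ on $S$, the integer $H \cdot \beta_S \ge 0$ strictly drops whenever $\beta_S$ strictly drops in the Mori partial order, so $\beta_S$ stabilizes after finitely many steps. Once $\beta_S$ is fixed, any effective $\beta \in (\beta_S, d)$ decomposes, up to a chosen lift $\tilde{\beta}_S$, as $\beta = \tilde{\beta}_S + a\ell + b\gamma$ with $b \ge 0$, since $\gamma$ is the fiber class of $X(S) \to Z(S)$ and effective classes have non-negative $\gamma$-coefficient. Combined with $\int_\gamma \xi = 1$ and the fact that $\int_\ell \xi$ is a fixed integer, the equation $d = \int_\beta \xi$ forces $d$ to be bounded below whenever the generating function is non-trivial. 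Hence $d$ also stabilizes.

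The remaining two components are easy. The number of internal insertions $|\!|\alpha|\!|$ is a non-negative integer, so the descending chain (in which larger $|\!|\alpha|\!|$ is higher order) is non-increasing and stabilizes. Once $\beta_S$ is fixed, the total size $|\mu| = \int_{\beta_S} D$ is a fixed non-negative integer, so there are finitely many partitions $\mu$, and each cohomology label $\delta_{a_i} V_{m_i}$ is drawn from a finite $\T$-homogeneous basis of $H^*(X(D))$. Consequently the set of possible $\vec{\mu}$ is finite, and any partial order on a finite set trivially satisfies DCC; in particular the inner lex order on $(\deg_D(\vec{\mu}), l(\mu), \{(\mu_i,\deg\delta_{a_i},\deg V_{m_i})\})$ has no infinite descending chain.

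The main subtlety anticipated is the lower bound on $d$: it rests on the explicit description of effective curve classes on the local model $X(S)$ above a given $\beta_S$, together with the intersection numbers of $\xi$ with the distinguished fiber classes $\ell$ and $\gamma$ recalled in Section~\ref{s:1.1}. Granted this bound, the three successive stabilizations above show that any descending chain of generating functions of the form \eqref{e:1.5.1} stabilizes coordinate by coordinate, which yields the DCC.
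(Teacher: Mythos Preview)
Your argument is correct, and in one respect it is cleaner than the paper's. One imprecision should be tightened: when you bound $d$ from below, you write ``$\int_\ell \xi$ is a fixed integer,'' but what you actually need is $\int_\ell \xi = 0$. This holds because $\xi$ is represented by the infinity divisor, which is disjoint from $Z$ and hence from $\ell$. With $\int_\ell \xi = 0$ and $\int_\gamma \xi = 1$, your decomposition $\beta = \tilde\beta_S + a\ell + b\gamma$ gives $d = \int_{\tilde\beta_S}\xi + b$, and $b \ge 0$ for an effective $\beta$ then bounds $d$ below. If $\int_\ell\xi$ were a nonzero integer, the unconstrained $a$ would spoil the bound, so the precise vanishing is essential.

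The main difference from the paper is in how you handle $\vec\mu$. The paper invokes the virtual dimension formula: equating it with the total degree of insertions yields an upper bound on $\deg_D\vec\mu$ (so that quantity, which is non-decreasing along a descending chain, must stabilize), and then argues separately that $l(\vec\mu)$ stabilizes. You bypass all of this with the observation that once $\beta_S$ is fixed the total contact number $|\mu| = \int_{\beta_S} D$ is fixed, so there are only finitely many partitions $\mu$, and the cohomology weights are drawn from a finite basis; hence the set of possible $\vec\mu$ is finite and DCC is automatic. This is more elementary and avoids the dimension count entirely. The paper's route, on the other hand, makes explicit the role of the virtual dimension constraint, which is in any case the mechanism that governs which generating functions are nonzero; but for the bare DCC statement your shortcut suffices.
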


\begin{proof}
Given $((\beta_S, d), |\!|\alpha|\!|, \vec{\mu})$, the virtual dimension of the moduli space of relative stable maps is given by
$$
 \int_{(\beta_S, d)} c_1(X(S)) + |\!|\alpha|\!| + l(\vec \mu) - \sum_{i = 1}^{l(\vec \mu)} \mu_i.
$$ 
The first term depends only on $(\beta_S, d)$ since $\int_\ell c_1(X(S)) = 0$. In order to get non-trivial invariants it must agree with
$$
\sum_{i = 1}^{|\!|\alpha|\!|} \deg \alpha_i + \deg_D \vec \mu + \deg V_{m_i}.
$$
For a fixed $\beta_S^{\circ}$, the set 
$$ \{(\beta_S^{\circ}, d') \mid (\beta_S^{\circ}, d') < (\beta_S^{\circ}, d^{\circ}) \}$$
is clearly finite. 
Furthermore, the set 
$$ \{ \beta'_S \mid \beta'_S < \beta_S \}$$
is also finite.
Thus in any descending chain, $(\beta_S, d)$ is stabilized in finite steps. 
Similarly the number of insertions $|\!| \alpha |\!|$ stabilizes in finite steps.  

$\deg_D \vec \mu$ may increase in a descending chain. 
However, since 
$$l(\vec \mu) - \sum_{i = 1}^{l(\vec \mu)} \mu_i \le 0,$$ 
the virtual dimension count gives an upper bound for it and then 
$\deg_D \vec \mu$ stabilizes. 
Then the number of contact points $l(\vec \mu)$ stabilizes as well. It is clear that the remaining choices for $\vec \mu$ form a finite set.
\end{proof}

\subsection{Degeneration formula} \label{deg-formula}

We will apply the degeneration formula to a family obtained from deformation to the normal cone.

\subsubsection{Deformation to the normal cone}\label{subsec:blowup}

Given a smooth pair $(S, Z)$ with $Z$ is a smooth closed subvariety of $S$, we introduce the following notations. Let $N = N_{Z/S}$ be the normal bundle of $Z$ in $S$, $\tilde{S} =\bl_ZS$ be the blow up of $S$ along $Z$, $E=\bp_Z(N)$ the exceptional divisor in $\tilde{S}$, and $P=\bp_Z(N \oplus \oo)$. Let 
$$
p: P \to Z \to S, \qquad \phi: \tilde{S} \to S 
$$
be the natural morphisms. 

The deformation to the normal cone for the pair $(S, Z)$ is simply
$$ 
W =W(S,Z)=\bl_{Z \times \{0\}} (S \times \A^1).
$$ 
Then $W_t =S$ when $t \ne 0$ and $W_{t=0}$ is obtained by gluing $\tilde{S}$ and $P$ along $E$. It is easy to see that
$$
X(W(S, Z)) \to \A^1
$$ 
and $W(X(S), X(Z)) \to \A^1$ are isomorphic.

\subsubsection{Degeneration formula}

Applying the degeneration formula to the family $X(W) \to \A^1$, we get a degeneration of absolute invariants:
\begin{multline} \label{e:deg}
\langle~ \alpha ~\rangle_{(\beta_S, \,d)}^{X(S)} 
= \sum_{I } \sum_{\eta=(\Gamma_1, \Gamma_2)} C_\eta \times \\
{\phi_X}_*(\langle~ \alpha_1 \mid \mu , e^I~\rangle ^{\bullet (X(\tilde{S}), X(E))}_{\Gamma_1}) \cdot
{p_X}_*( \langle~ \mu, e_I  \mid \alpha_2 ~\rangle^{\bullet (X(P), X(E)) }_{\Gamma_2}).
\end{multline}
Here $\eta=(\Gamma_1,\Gamma_2)$ is a splitting of the discrete data. $C_\eta$ is a constant determined by $\eta$. $\Gamma_i$ specifies curve classes modulo extremal rays, 
non-relative and relative marked points, and contact orders of relative marked points encoded in a partition $\mu$ for each component. $I$ is an index set of $l(\mu)$ elements, $e^I \in H^*(X(E))^{\oplus l(\mu)}$, $e_I$ its dual with respect to some basis of
$H^*(X(E))$. $\alpha$ represents primitive insertions, $\alpha_i$ are the corresponding insertions specified by $\Gamma_i$.

As we are dealing with generating functions, we identify variables using $\phi_X$ and $p_X$ induced from $\phi: \tilde{S} \to S $ and $p: P  \to S$. For instance 
$$
{\phi_X}_*(q^{\beta}) = q^{{\phi_X}_*(\beta)}.
$$
Assume the total curve classes of $\Gamma_1$ is $(\beta_{\tilde{S}}, d_{\tilde{S}})$ and that of $\Gamma_2$ is $(\beta_{P}, d_P)$. The constrains on curve classes are
$$
\beta_S = \phi_*(\beta_{\tilde{S}}) + p_*(\beta_{P}),\qquad 
\int_{\beta_{\tilde{S}}} E = \int_{\beta_P} E, \qquad
d = d_{\tilde{S}} + d_P.
$$
Further analyzing these constraints on each connected component of $\Gamma_i$, we see that if $(\beta_S,\, d)$ is non-extremal, then any component specified by $\Gamma_1$ or $\Gamma_2$ has non-extremal curve classes.

We note that \eqref{e:deg} has a natural extension which allows the left hand side to be a relative invariant.
It comes in different variants, but all ``obvious'' extensions of \eqref{e:deg}.
The readers who are unfamiliar with them can consult \cite{MP} for example.

\subsection{A product formula for the relative invariants} \label{s:1.7}
We recall a product formula for relative invariants proved in \cite{LQ}.

Let $X$ and $Y$ be nonsingular projective varieties , 
and $D$ a a smooth divisor in $Y$. 
We further assume $H^1(Y)=0$, so a curve class of $X\times Y$ is of the form $(\beta_X, \beta_Y)$ 
where $\beta_X$ (resp.\ $\beta_Y$) is a curve class of $X$ (resp.\ $Y$).

The product formula is best formulated in terms of the \emph{Gromov--Witten correspondence}.
Let $\Gamma_X=(g,n+\rho, \beta_X)$. The Gromov--Witten correspondence
\[
 R_{\Gamma_X}: H^*(X)^{\otimes (n+\rho) }  \to H^*(\mbar_{g,n+\rho}),
\] 
is defined by
\[
  R_{\Gamma_X} (\alpha) := \operatorname{PD} \left( \st_* \left( ev_X^*(\alpha ) \cap [\mbar_{\Gamma_X} (X)]^{vir} \right) \right),
\]
where $\operatorname{PD}$ stands for the Poincar\'e duality and $\st$ and $\ev$ are defined in \eqref{e:st} and \eqref{e:ev}.

For the relative Gromov--Witten correspondence for the smooth pair $(Y,D)$,
there is a similarly defined Gromov--Witten correspondence
\[
R_{\Gamma_{(Y,D)}}: H^*(Y)^{\otimes n} \otimes H^*(D)^{\otimes \rho} \to H^*(\mbar_{g,n+\rho})
\]
defined as 
\[
  R_{\Gamma_{(Y, D)}} (\alpha'; \delta) :=
 \operatorname{PD} \left( \st_* \left( ev_Y^*(\alpha ) ev_D^*( \delta ) 
  \cap [\mbar_{\Gamma_Y}(Y,D)]^{vir} \right) \right),
\]
where, by a slight abuse of notation, 
$\Gamma_{(Y,D)}$ encodes all discrete data of the relative moduli.


\begin{thm}[{The product formula for $X \times (Y,D)$} {\cite[Corollary~3.1]{LQ}}] \label{t:product}
Let 
$$\Gamma_{X \times (Y,D)} =(g,n,(\beta_X,\beta_Y),\rho,\mu)$$ 
be the relative data for the product $X \times (Y,D)$. We have 
\[
 \begin{split}
  &R_{\Gamma_{X \times (Y,D)}}
   ((\alpha_1 \otimes \alpha'_1) \otimes ... \otimes  (
  (\alpha_n \otimes \alpha'_n));
   (\alpha_{n+1} \otimes \delta_1) \otimes ... \otimes ((\alpha_{n+\rho} \otimes \delta_\rho)) \\
 = &R_{\Gamma_X} (\alpha_1 \otimes ... \otimes  \alpha_{n+\rho}) 
   R_{\Gamma_{(Y, D)}} (\alpha'_1 \otimes ... \otimes \alpha'_n; 
   \delta_1\otimes ...\otimes \delta_\rho),
 \end{split}
\]
where $\alpha_i \in H^*(X)$, $\alpha'_i \in H^*(Y)$ and
$\delta_j \in H^*(D)$.
\end{thm}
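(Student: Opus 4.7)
The plan is to realize the relative moduli space for $X \times (Y,D)$ as a fibered product over the moduli of pointed prestable curves, and then read off the product structure on the correspondence from compatibility of virtual fundamental classes and evaluation maps. First, using the assumption $H^{1}(Y)=0$ and the product structure, I would observe that a stable map $(C, f) \to X\times Y$ with the prescribed contact data along $X\times D$ is equivalent to a pair $(f_X, f_{(Y,D)})$ consisting of an absolute stable map $f_X: C\to X$ and a relative stable map $f_{(Y,D)}:C\to (Y,D)$ sharing the same source $C$ and the same marked points; the contact orders come entirely from the $(Y,D)$ side since $X\times D$ is defined by pullback from $Y$. This yields a natural morphism
\[
  \Phi: \mbar_{\Gamma_{X\times(Y,D)}}(X\times Y, X\times D)
  \longrightarrow
  \mbar_{g,n+\rho}(X,\beta_X)\times_{\mfrak{M}_{g,n+\rho}}
  \mbar_{\Gamma_{(Y,D)}}(Y,D),
\]
where $\mfrak{M}_{g,n+\rho}$ denotes the Artin stack of prestable pointed curves (with the expanded target stack folded into the $(Y,D)$ factor via its forgetful map to $\mfrak{M}_{g,n+\rho}$). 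I would check that $\Phi$ is an isomorphism of stacks by constructing an inverse: a pair of maps out of the same prestable $C$ assembles to a map to $X\times Y$, and since the expansions occur only on the $(Y,D)$ factor and $X$ is untouched, the relative stability and contact conditions match.

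Second, I would verify that the perfect obstruction theory on the relative moduli splits as the sum of the pulled-back obstruction theories on the two factors. The tangent-obstruction complex of a map to $X\times Y$ is computed by $Rf_{*}f^{*}(T_X\oplus T_Y)$ relative to $\mfrak{M}_{g,n+\rho}$; for the relative theory one replaces $T_Y$ by the logarithmic tangent bundle $T_Y(-\log D)$ and works relative to the appropriate stack of targets. Because $X$ carries no divisor and the two factors are independent, this decomposition is orthogonal. Invoking the compatibility of virtual fundamental classes under fibered products with identical base (a formal consequence of Behrend–Fantechi, or equivalently the product formula for virtual classes proven in [LQ]), I get
\[
  \Phi^{*}\bigl([\mbar_{g,n+\rho}(X,\beta_X)]^{\vir}
  \boxtimes_{\mfrak{M}}
  [\mbar_{\Gamma_{(Y,D)}}(Y,D)]^{\vir}\bigr)
  = [\mbar_{\Gamma_{X\times(Y,D)}}(X\times Y, X\times D)]^{\vir}.
\]

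Third, the evaluation maps factor through the two factors: $\ev_{X\times Y}^{*}(\alpha_i\otimes\alpha'_i)=\ev_{X}^{*}\alpha_i\cdot\ev_{Y}^{*}\alpha'_i$ and similarly for the relative evaluations at $D$, since $X\times D\cong X\times D$ with the obvious projections. Pulling back along $\Phi$ and pushing forward by the stabilization map $\st$ into $\mbar_{g,n+\rho}$, I would apply the projection formula on the fibered product together with the Künneth decomposition of $H^{*}(\mbar_{g,n+\rho})$ used implicitly by $\operatorname{PD}\circ\st_{*}$, to separate the integrand into the two pieces appearing on the right-hand side of the theorem. Combining these three steps yields the claimed identity of correspondences.

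The main obstacle, as usual in this type of product formula, is step two: matching the virtual fundamental class on the relative moduli with the external product on the fibered product. The expanded (or logarithmic) target on the $(Y,D)$ side introduces extra strata and extra contributions to the relative obstruction theory, and one must check that pullback along $\Phi$ preserves these contributions and does not produce any excess correction. I would carry this out by realizing the relative theory logarithmically (as in [AC, GS]), where the product structure $X\times(Y,D)^{\log}$ is manifestly an orthogonal fibered product of log stacks and the splitting of the obstruction theory is tautological; the comparison results of [ACW, AMW] recalled in Remark~\ref{r:0.3.1} then transport the identity to the usual relative theory used in the degeneration formula.
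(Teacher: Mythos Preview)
The paper does not prove this theorem; it is quoted from \cite[Corollary~3.1]{LQ} without argument, so there is no in-paper proof to compare against. Your outline follows the standard strategy (Behrend \cite{kB} in the absolute case, extended to the log/relative setting in \cite{LQ}), and your instinct that the log formulation makes the obstruction-theory splitting transparent is correct and is indeed how \cite{LQ} proceeds.

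There is, however, a genuine gap in your Step~1: the morphism $\Phi$ you describe does not exist, and the claimed isomorphism is false. Given a stable relative map $f\colon C \to (X\times Y, X\times D)$, the projection $f_X = \pr_X\circ f\colon C \to X$ need not be \emph{stable}: a rational component of $C$ carrying only two special points may be contracted by $f_X$ yet not by $f_Y$, so one does not land in $\mbar_{g,n+\rho}(X,\beta_X)$. (Your proposed inverse does define an open immersion from the fiber product into the product moduli, but it misses exactly these configurations.) This is not a technicality to be patched later; it is precisely what makes the product formula nontrivial already in Behrend's absolute case. The repair, as in \cite{kB} and \cite{LQ}, is to abandon the isomorphism claim and instead compare virtual classes after projecting and then \emph{stabilizing}, which yields a genuine morphism into $\mbar(X)\times_{\mbar_{g,n+\rho}}\mbar(Y,D)$ (fiber product over the Deligne--Mumford space, not over $\mathfrak{M}_{g,n+\rho}$). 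The heart of the proof is then the identity that the pushforward of $[\mbar(X\times Y, X\times D)]^{\vir}$ along this map equals $\Delta^!\bigl([\mbar(X)]^{\vir}\times[\mbar(Y,D)]^{\vir}\bigr)$. Your Step~2 has the right shape for this, but it must be organized around this pushforward identity rather than the nonexistent isomorphism $\Phi$; the difficulty you locate in Step~2 is in fact already present in Step~1.
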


\begin{remark}
(i) In this paper, only the special case with $g=0$ and $(Y,D) =(\mathbb{P}^1, \{ pt \})$ is used.

(ii) The above product formula can be reformulated in terms of more refined GW invariants,  
with additional insertions from arbitrary cycles in $\mbar_{g,n}$. 
For example, for any class $\gamma \in H^*(\mbar_{g,n})$,
\[
  \int_{\mbar_{g,n}} \st^*(\gamma) R_{\Gamma_X} (\alpha) = \langle \gamma, \st_* ( ev_X^*(\alpha ) \cap [\mbar_{\Gamma_X} (X)]^{vir} ) \rangle,
\]
where $\langle \cdot \rangle$ is the pairing between cohomology and homology of $\mbar_{g,n}$.
Since the Poincar\'e duality holds for $H^*(\mbar_{g,n})_{\mathbb{Q}}$, the above integral with arbitrary $\gamma$ gives $R_{\Gamma_X} (\alpha)$.
In genus zero, however, $H^*(\mbar_{0,n})$ is generated by $\psi$-classes and the refined GW invariants above are simply \emph{ancestors}.
Similar discussion applies to the relative invariants.
\end{remark}

\subsection{Splitting bundles} \label{splitting}
Our main task is to reduce the proof of $\T$-invariance of $(S,F,F')$
from the non-split bundles to split bundles.
Here we explain how the bundles can be split ``classically''.

\begin{lemma} \label{l:5.3.1}
Given a rank $r+1$ vector bundle $F \to S$,
there exists a sequence of blowing-ups on smooth centers such that
the pullback of $F$, denoted $\pi^*(F)$, admits a filtration of subbundles
\[
 0=F_0 \subset F_1 \subset \ldots \subset F_{r+1} = \pi^*(F),
\]
satisfying $\operatorname{rank}(F_{i+1}/F_i) =1$ for all $i$.
\end{lemma}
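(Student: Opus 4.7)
The plan is to proceed by induction on the rank $r+1$; the case $r+1=1$ is trivial. For the inductive step, I would reduce the problem to constructing, after a sequence of blow-ups $\pi_1: S_1 \to S$ along smooth centers, a rank-one subbundle $F_1 \subset \pi_1^* F$ whose quotient $\pi_1^*F/F_1$ is locally free. Once such an $F_1$ is produced, the induction hypothesis applied to the rank-$r$ quotient yields a further blow-up $\pi_2: S_2 \to S_1$ along smooth centers together with a complete flag on $\pi_2^*(\pi_1^*F/F_1)$, and this flag lifts uniquely to a complete flag on $(\pi_1\pi_2)^*F$ refining $0 \subset \pi_2^*F_1 \subset (\pi_1\pi_2)^*F$.

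To construct $F_1$, I would work with the projective bundle $p: \bp(F) \to S$ together with its tautological short exact sequence $0 \to \oo_{\bp(F)}(-1) \to p^*F \to Q \to 0$. Any section $\sigma: S_1 \to \bp(\pi_1^*F) = \bp(F)\times_S S_1$ pulls this sequence back to a short exact sequence of vector bundles on $S_1$, and setting $F_1 := \sigma^*\oo(-1)$ gives the desired rank-one subbundle of $\pi_1^*F$ with locally free quotient. So the entire problem reduces to producing such a section after a sequence of blow-ups of $S$ along smooth centers.

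Such a section is produced by resolving a rational one. Over any dense open $U \subset S$ on which $F$ trivializes, fix a trivialization $F|_U \cong \oo_U^{\oplus(r+1)}$ and take the first standard basis vector; this defines a constant section $U \to \bp(F)|_U$, hence a rational section $S \dashrightarrow \bp(F)$ of $p$. Applying Hironaka's theorem on resolution of indeterminacies to this rational map between the smooth projective varieties $S$ and $\bp(F)$ yields a sequence of blow-ups $\pi_1: S_1 \to S$ along smooth centers contained in the successive indeterminacy loci, after which the rational map becomes a morphism. Since it already agreed with a section of $p$ over $U$, the extended morphism is forced by continuity to be a section of the pulled-back bundle $\bp(\pi_1^*F) \to S_1$.

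The only point that requires care, and the main potential obstacle, is confirming that Hironaka's theorem can be invoked in a form producing blow-ups of $S$ itself along smooth centers rather than some less controlled birational modification; this is however exactly the statement of the resolution-of-indeterminacies theorem in its standard form for smooth projective varieties, so no genuine difficulty arises. The remaining bookkeeping in the inductive step — functoriality of the tautological sequence under pullback and compatibility of the lifted filtration with the composite blow-up $S_2 \to S$ — is formal.
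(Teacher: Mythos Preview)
Your proof is correct and rests on the same key idea as the paper's: produce a rational section of an auxiliary bundle by local triviality, then resolve its indeterminacy by Hironaka. The only difference is organizational. The paper works with the complete flag bundle $p:\Fl(F)\to S$ all at once: a single rational section $S\dashrightarrow\Fl(F)$ is resolved by one sequence of blow-ups, and the resulting global section of $\pi^*\Fl(F)$ immediately yields the full filtration. Your inductive version instead resolves a rational section of $\bp(F)$ to peel off one line subbundle, then repeats on the quotient. The paper's packaging is slightly more economical (one invocation of resolution of indeterminacies instead of $r+1$), while your version makes the mechanism producing each step of the flag more explicit; neither approach offers a real advantage over the other beyond this.
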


\begin{proof}
Consider the complete flag bundle over $S$
\[
  p: \Fl (F) \to S.
\]
By local triviality, $p$ admits a rational section $s$.
Resolving the rational map $S \dashrightarrow \Fl(T)$ by
a sequence of blowing-ups along smooth centers, one gets
\[ 
 \pi: \tilde{S} \to S
\]
such that $\pi^* \Fl (F)$ admits a section.
\end{proof}
We say that $\pi^* F$ \emph{admits complete flags} when the conclusion of
Lemma~\ref{l:5.3.1} holds. 

\begin{lemma} \label{l:deform}
Let $F \to S$ be a vector bundle admitting complete flags.
Then it can be deformed to $F_1 \to S$ such that $F_1$ is a split bundle.
\end{lemma}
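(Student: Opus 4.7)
The plan is to construct an explicit flat deformation of $F$ to the split bundle $\bigoplus_{i=1}^{r+1} L_i$, where $L_i := F_i/F_{i-1}$ are line bundles, by means of the Rees construction applied to the filtration $F_\bullet$. Concretely, on $S \times \A^1$ with coordinate $t$ on $\A^1$, I would define the $\oo_{S \times \A^1}$-submodule
\[
 \widetilde{F} \;:=\; \sum_{i=0}^{r+1} t^{i}\, F_i \;\subset\; F \otimes_{\oo_S} \oo_{S \times \A^1},
\]
where each $F_i \subset F$ is viewed as a constant family. The containment $F_{i-1} \subset F_i$ ensures that $\widetilde{F}$ is indeed an $\oo_S[t]$-submodule, and it is coherent since it is generated locally by finitely many sections of the $F_i$'s.

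The main point to verify is that $\widetilde{F}$ is locally free on $S \times \A^1$. I would do this by picking, on a sufficiently small Zariski open $U \subset S$, splittings of each short exact sequence $0 \to F_{i-1} \to F_i \to L_i \to 0$, which exist after shrinking $U$ because the $L_i$ are line bundles. These yield a local isomorphism $F|_U \cong \bigoplus_i L_i|_U$ under which $F_j$ corresponds to $\bigoplus_{i \le j} L_i$. A direct computation in this local frame identifies
\[
 \widetilde{F}|_{U \times \A^1} \;=\; \bigoplus_{i=1}^{r+1} L_i|_U \cdot t^{i} \, \oo_U[t] \;\cong\; \bigoplus_{i=1}^{r+1} L_i \otimes_{\oo_U} \oo_{U \times \A^1},
\]
which is locally free of rank $r+1$ and, in particular, flat over $\A^1$.

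Given this, identifying the two fibers is immediate. From the same local description, $\widetilde{F}/t \widetilde{F} \cong \bigoplus_i L_i$, so the central fiber is the split bundle $F_1 := \bigoplus_{i=1}^{r+1} L_i$. For $t = t_0 \ne 0$, each $t_0^i$ is a unit in the localization at $t_0$, so the expression $\sum t_0^i F_i$ equals $F_{r+1} = F$ inside $F$; hence $\widetilde{F}|_{S \times \{t_0\}} \cong F$. Thus $\widetilde{F}$ realizes $F$ as a flat deformation of a split bundle, proving the lemma.

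The only nontrivial step is checking local freeness; once the flag is locally trivialized the rest is bookkeeping. One could alternatively proceed by induction on $r$ using the fact that, for any extension $0 \to A \to E \to B \to 0$ with class $e \in \on{Ext}^1(B,A)$, the family of extensions with class $t \cdot e$ is a flat deformation of $E$ (at $t=1$) to $A \oplus B$ (at $t=0$). The Rees construction simply carries out all these scalings simultaneously and has the advantage of handling the entire flag in one uniform step.
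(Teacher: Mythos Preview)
Your proof is correct and is essentially an explicit realization of the paper's approach. The paper's proof is two sentences: complete flags are governed by extension classes, and one deforms these classes to zero. Your Rees construction is precisely the concrete mechanism that carries this out, and you even note the connection yourself in the final paragraph, observing that scaling each extension class by $t$ is what the Rees module does simultaneously for the entire flag. So there is no genuine difference in strategy, only in the level of detail; your version supplies the explicit family and the verification of local freeness that the paper leaves to the reader.
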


\begin{proof}
The non-triviality of complete flags is governed by multiple extension classes.
There is a deformation of the bundles, with the base $S$ fixed,
sending all extension classes to zero.
\end{proof}

\section{Fiber integrals} \label{s:2}
\setcounter{subsection}{+1}

In this section, we assemble some results about fiber integrals. By fiber integrals we mean, after \cite{MP}, the GW invariants of $G$-principal bundles or their associated fiber bundles $p:E \to B$ with the curve class $\beta$ such that $p_*(\beta) =0$. This should not be confused with the similarly named \emph{fiber curve class (modulo the extremal rays)} in Section~\ref{s:5.2.2}.

We first recast results from \cite[Section~1.2]{MP} in the form needed for our purpose. This also allows us to deduce Proposition~\ref{prop:bir}. All results in this section are for fiber integrals only.

All the schemes we consider are smooth. Let $G$ be a group scheme over $\cc$, and $BG =[\Spec \cc / G]$ the classifying stack of $G$ bundles. Given a smooth map $f: B \to BG$ and a $G$-equivariant smooth pair $(F, D_F)$, where $D_F$ is a $G$-divisor in $F$, define the fiber bundle pairs $(E,D_E)$ over $B$ as the fiber product
$$
\xymatrix{
(E, D_E) \ar[r]^-{g}\ar[d]^p  &( [F/G], [D_F/G])\ar[d] \\
B \ar[r]^{f} &BG .
}
$$

Now we switch to the notation of \emph{log geometry} (see Remark~\ref{r:0.3.1} for justification). Let $E^{\dagger}$ (resp.\ $F^{\dagger}$) be the log scheme with the divisorial log structure determined by $D_E$ (resp.\ $D_F$). We have a cartesian diagram of log stacks
\[
\xymatrix{
E^{\dagger} \ar[r]^-{g}\ar[d]  & [F^{\dagger}/G]\ar[d]^{\pi} \\
B \ar[r]^{f} &BG,
}
\]
where $B$ and $BG$ are equipped with the trivial log structure. In particular $f$ is strict.

Given a $G$-invariant curve class $\beta $ in $F$, we have a cartesian diagram  between log stacks
\begin{equation} \label{e:2.1.1}
\xymatrix{
 \mbar_{0,n}(E^{\dagger}/B, \beta)\ar[r]^-{\bar{g}}\ar[d]
   &\mbar_{0,n}( [F^{\dagger}/G]/BG, \beta) \ar[d]^{\bar{\pi}}\\
 B \ar[r]^{f} &BG.}
\end{equation}
Here $\mbar_{0,n}(E^{\dagger}/B,\beta)$ is the log stack of stable log maps to $E^{\dagger}$ over the category of log schemes over $B$ with the prescribed discrete invariants. It can also be viewed as the log stack of stable maps to the family $E^{\dagger} \to B$ over the category of log schemes. $\mbar_{0,n}( [F^{\dagger}/G]/BG, \beta) $ is defined similarly.

\begin{prop}
\label{p:2.0.4}
\[
 [\mbar_{0,n}(E^{\dagger}/B, \beta)]^{\vir}  
= \bar{g}^*[\mbar_{0,n}( [F^{\dagger}/G]/BG, \beta)]^{\vir}.
\]
\end{prop}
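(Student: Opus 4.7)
The proof proceeds by identifying compatible relative perfect obstruction theories on the two moduli stacks in the cartesian square, and then invoking functoriality of virtual fundamental classes under smooth base change.

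First I would recall the standard construction: for any log morphism $\mathcal{Y}^\dagger \to T$ where $T$ carries the trivial log structure, the moduli stack $\mbar_{0,n}(\mathcal{Y}^\dagger/T, \beta)$ carries a natural perfect obstruction theory relative to the stack of $T$-families of pre-stable log curves (in the sense of Gross--Siebert and Abramovich--Chen), given by $(R\pi_{\ast} u^{\ast} T^{\log}_{\mathcal{Y}^\dagger/T})^{\vee}[1]$, where $\pi$ denotes the universal curve and $u$ the universal log map. Applying this construction to both $E^\dagger \to B$ and $[F^\dagger/G] \to BG$ produces the two relative perfect obstruction theories whose virtual classes one wishes to compare.

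Next I would verify that these obstruction theories are compatible under pullback along $\bar g$. Since $B$ and $BG$ both carry the trivial log structure, the square in \eqref{e:2.1.1} is \emph{strict} cartesian, hence $T^{\log}_{E^\dagger/B} = g^{\ast} T^{\log}_{[F^\dagger/G]/BG}$. Moreover this square lifts to a cartesian square of universal curves and universal maps, so the universal data over $\mbar_{0,n}(E^\dagger/B,\beta)$ are precisely the $\bar g$-pullbacks of those over $\mbar_{0,n}([F^\dagger/G]/BG,\beta)$. Cohomology and base change for $R\pi_{\ast}$ then identify the relative obstruction theory on the source of $\bar g$ as the $\bar g$-pullback of the relative obstruction theory on the target, and likewise for the underlying auxiliary stacks of pre-stable log curves.

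Finally, since $f: B \to BG$ is smooth, the base changes $g$ and $\bar g$ are smooth as well. Behrend--Fantechi functoriality of virtual classes in a cartesian square with compatible relative perfect obstruction theories (equivalently, Manolache's virtual pullback specialized to a smooth morphism, which coincides with ordinary flat pullback) then yields
\[
[\mbar_{0,n}(E^{\dagger}/B, \beta)]^{\vir}
= \bar{g}^{\ast}[\mbar_{0,n}( [F^{\dagger}/G]/BG, \beta)]^{\vir}.
\]
The main subtlety is the log-theoretic bookkeeping in the middle step: verifying that the relative log tangent complex, the universal curve, and the auxiliary stacks of pre-stable log curves all commute with the strict cartesian base change induced by $f$. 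Once this is in place the conclusion reduces to standard functoriality of virtual classes under smooth pullback.
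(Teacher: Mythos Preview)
Your proposal is correct and follows essentially the same approach as the paper. The paper makes the auxiliary base explicit as Olsson's stack $\Tor_{\mathfrak{M}_{0,n}}$ (rather than just ``the stack of pre-stable log curves'') and cites Manolache's \cite[Theorem~4.1(3)]{vpb} for the commutation of virtual pullback with flat pullback, but the logical structure---compatibility of the relative obstruction theories via $\Omega_{E^\dagger/B}\simeq g^*\Omega_{[F^\dagger/G]/BG}$, followed by functoriality under the smooth/flat base change $f$---is identical to yours.
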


\begin{proof}
The cartesian diagram
\[
 \xymatrix{
 \mbar_{0,n}(E^{\dagger}/B, \beta)\ar[r]\ar[d] 
    &\mbar_{0,n}( [F^{\dagger}/G]/BG, \beta) \ar[d]\\
 B \times \mathfrak{M}_{0,n} \ar[r] &BG \times \mathfrak{M}_{0,n} }
\]
induces another
\[
\xymatrix{
 \mbar_{0,n}(E^{\dagger}/B, \beta)\ar[r]\ar[d] 
     &\mbar_{0,n}( [F^{\dagger}/G]/BG, \beta) \ar[d]\\
 B \times \Tor_{\mathfrak{M}_{0,n}} \ar[r] &BG \times \Tor_{\mathfrak{M}_{0,n}},
}
\]
where $\mathfrak{M}_{0,n}$ is the moduli stack of genus zero, $n$-pointed prestable curves and $\Tor_\mathfrak{M}$ is the classifying stack of fine and saturated log schemes over $\mathfrak{M}$ \cite[Remark 5.26]{O1}. Since $\Omega_{E^{\dagger}/B} \simeq g^* \Omega_{\pi}$, the perfect obstruction theories for the vertical arrows are compatible.
By \cite[Theorem 4.1 (3)]{vpb}, virtual pullbacks commute with flat pullbacks, the proposition is proved.
\end{proof}

When $B$ is a point,  from the diagram
\[
\xymatrix{
\mbar_{0,n}(F^{\dagger}, \beta) \ar[r]\ar[d]& \mbar_{0,n}( [F^{\dagger}/G]/BG, \beta) \ar[d]\\
\on{Spec}\cc \ar[r]& BG
}
\]
we see
\begin{cor}
\label{c:2.0.5}
\[
\mbar_{0,n}( [F^{\dagger}/G]/BG, \beta) = [\mbar_{0,n}( F^{\dagger}, \beta)/G ],
\]
and $[\mbar_{0,n}( [F^{\dagger}/G]/BG, \beta)]^{\vir}$ corresponds to the $G$-equivariant virtual class on $[\mbar_{0,n}( F^{\dagger}, \beta)/G ]$.
\end{cor}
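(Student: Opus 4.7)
The plan is to derive Corollary~\ref{c:2.0.5} directly from Proposition~\ref{p:2.0.4} specialized to $B = \on{Spec}\cc$, combined with the basic fact that $\on{Spec}\cc \to BG$ is the universal $G$-torsor. Both the identification of stacks and the virtual class statement then reduce to bookkeeping.

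First, to identify the moduli stacks, I would take the cartesian square \eqref{e:2.1.1} with $B = \on{Spec}\cc$. The resulting square
\[
\xymatrix{
\mbar_{0,n}(F^{\dagger},\beta) \ar[r]^-{\bar g}\ar[d] & \mbar_{0,n}([F^{\dagger}/G]/BG,\beta) \ar[d]^{\bar\pi}\\
\on{Spec}\cc \ar[r] & BG
}
\]
is cartesian by construction. Since the bottom arrow is the universal $G$-torsor and the torsor structure is preserved under base change, $\bar g$ is a $G$-torsor, which yields the identification
\[
\mbar_{0,n}([F^{\dagger}/G]/BG,\beta) \;\simeq\; [\mbar_{0,n}(F^{\dagger},\beta)/G].
\]

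For the virtual class comparison, Proposition~\ref{p:2.0.4} specializes to
\[
[\mbar_{0,n}(F^{\dagger},\beta)]^{\vir} = \bar g^{\,*}\,[\mbar_{0,n}([F^{\dagger}/G]/BG,\beta)]^{\vir}.
\]
Pullback along the $G$-torsor $\bar g$ realizes the standard equivalence between the Chow theory of the quotient stack $[\mbar_{0,n}(F^{\dagger},\beta)/G]$ and the $G$-equivariant Chow theory of $\mbar_{0,n}(F^{\dagger},\beta)$. Under this equivalence, the class on the left above is by definition the $G$-equivariant virtual class upstairs, which is exactly the second assertion of the corollary.

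I expect no real obstacle: the corollary is a translation of Proposition~\ref{p:2.0.4} into $G$-equivariant language. The only point deserving a brief sanity check is that the perfect obstruction theory constructed $G$-equivariantly on $\mbar_{0,n}(F^{\dagger},\beta)$ and the one giving the virtual class on $\mbar_{0,n}([F^{\dagger}/G]/BG,\beta)$ correspond under the torsor pullback, but this compatibility is precisely what the identification $\Omega_{E^{\dagger}/B}\simeq g^*\Omega_{\pi}$ used in the proof of Proposition~\ref{p:2.0.4} guarantees.
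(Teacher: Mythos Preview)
Your proposal is correct and follows exactly the paper's approach: the paper simply writes down the cartesian square over $\on{Spec}\cc \to BG$ and says ``we see'' the corollary, which is precisely your argument via the universal $G$-torsor and Proposition~\ref{p:2.0.4}. You have merely spelled out the details the paper leaves implicit.
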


We note that $\mbar_{0,n}(E^{\dagger}/B, \beta)$ can be identified with $\mbar_{0,n}(E^{\dagger},\beta)$ as moduli stacks by forgetting the map to the base $B$, and their virtual classes are the same since we are considering curves of genus zero (cf. \cite[Equation (3)]{MP}).

We spell out the consequence of Proposition \ref{p:2.0.4} in numerical form.

\begin{cor}[{cf.~\cite[Equation (4))]{MP}}]
Let $A^*$ be the operational Chow ring.
For any non-negative integers $k_i$, and classes $\delta_i \in A^*(B)$,  
$\alpha_r \in A^*([F/G])$, $\theta_s \in A^*([D_F/G])$, 
we have
\begin{equation} \label{e:2.1.2}
\begin{split}
&
\Big\langle~
\prod_{1\le  r \le n}\taub_{k_r}(p^*(\delta_r)\cup g^*(\alpha_r)) 
\mid \prod_{1 \le s \le m} \taub_{k_{s+n}}(p^*(\delta_{s+n})\cup g^*(\theta_s))
~\Big\rangle^{(E, D_E)}_\beta
\\& =
\int_B \prod_{1 \leq i \leq n+m} \delta_i \cup
f^*\Big\langle~
\prod_{1\le  r \le n}\taub_{k_r}\alpha_r 
\mid \prod_{1 \le s \le m} \taub_{k_{s+n}}\theta_s
~\Big\rangle^{([F/G], [D_F/G])}_\beta,
\end{split}
\end{equation}
where
\[
\begin{split}
&\Big\langle~
\prod_{1\le  r \le n}\taub_{k_r}\alpha_r 
\mid \prod_{1 \le s \le m} \taub_{k_{s+n}}\theta_s
~\Big\rangle^{([F/G], [D_F/G])} \\
&= \bar{\pi}_*
\Big(
\llangle~
\prod_{1\le  r \le n}\taub_{k_r}\alpha_r 
\mid \prod_{1 \le s \le m} \taub_{k_{s+n}}\theta_s
~\rrangle
\Big)
\end{split}
\] 
is an equivariant GW invariant of $(F, D_F)$ with the cycle
$\llangle~ \cdot | \cdot
~\rrangle$ defined in \eqref{rel-cycle} and
$\bar{\pi}$ defined in \eqref{e:2.1.1}.

In particular, the LHS of \eqref{e:2.1.2} 
 is nonzero only if $\sum \deg{\delta_i} \le \dim B$.
\end{cor}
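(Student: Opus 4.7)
The plan is to reduce \eqref{e:2.1.2} to the virtual pullback identity of Proposition~\ref{p:2.0.4} together with a projection/base-change argument for the cartesian square \eqref{e:2.1.1}. The starting observation is that, since $p_*\beta = 0$, every genus-zero stable log map contributing to $\mbar_{0,n+m}(E^\dagger/B,\beta)$ is contained in a single fiber of $p$, so the compositions $p\circ\ev_r$ and $p\circ\ev_{s+n}$ all factor through one morphism $\tilde p : \mbar_{0,n+m}(E^\dagger/B,\beta) \to B$, namely the structural map to the base. Likewise, $g\circ\ev_i = \ev_i'\circ\bar g$, where $\ev_i'$ is the corresponding evaluation on $\mbar_{0,n+m}([F^\dagger/G]/BG,\beta)$ and $\bar g$ is the map in \eqref{e:2.1.1}. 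Hence the insertions split as
\[
  \ev_r^*(p^*\delta_r\cup g^*\alpha_r) \;=\; \tilde p^*\delta_r \cdot \bar g^*\,{\ev_r'}^*\alpha_r,
\]
and analogously for the relative insertions with $\theta_s$. Moreover the stabilization maps and $\psi$-classes agree on the two moduli stacks, so $\st^*\prod\psi^{k_i}$ is $\bar g^*$ of the corresponding class.

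Invoking Proposition~\ref{p:2.0.4} to replace $[\mbar_{0,n+m}(E^\dagger/B,\beta)]^{\vir}$ by $\bar g^*[\mbar_{0,n+m}([F^\dagger/G]/BG,\beta)]^{\vir}$, and using the cohomological projection formula along $\tilde p$, the LHS of \eqref{e:2.1.2} becomes
\[
  \int_B \prod_i \delta_i \;\cdot\; \tilde p_*\Bigl( \bar g^*(\Phi) \cap \bar g^*[\mbar_{0,n+m}([F^\dagger/G]/BG,\beta)]^{\vir} \Bigr),
\]
where $\Phi$ is the product of the $\ev'$-pulled-back $\alpha$ and $\theta$ insertions together with $\st^*\prod\psi^{k_i}$. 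Since the square \eqref{e:2.1.1} is the base change of the smooth strict morphism $f:B\to BG$, flat (proper) base change yields $\tilde p_*\bar g^* = f^*\bar\pi_*$. This converts the expression into $\int_B \prod_i \delta_i \cdot f^*\bar\pi_*(\Phi \cap [\mbar_{0,n+m}([F^\dagger/G]/BG,\beta)]^{\vir})$, which is exactly the RHS of \eqref{e:2.1.2} by the definition of the equivariant invariant as $\bar\pi_*$ of the relative cycle \eqref{rel-cycle}. The final dimensional vanishing is immediate: $f^*\bar\pi_*(\cdot)$ is a class on $B$ of nonnegative codimension, so $\int_B \prod_i\delta_i \cdot f^*(\cdots) \neq 0$ forces $\sum \deg\delta_i \le \dim B$.

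The only genuinely delicate point is the base change identity $\tilde p_*\bar g^* = f^*\bar\pi_*$ in the presence of the classifying stack $BG$ and virtual classes; this must be interpreted via an equivariant approximation of $BG$ by finite-dimensional smooth schemes (or equivalently via equivariant intersection theory), under which $f$ becomes smooth (hence flat) in the ordinary sense and proper base change applies to $\bar\pi$. Everything else—the factorization of evaluation maps afforded by the fiber-class condition $p_*\beta=0$, the substitution of the virtual class via Proposition~\ref{p:2.0.4}, and the projection formula—is routine, so the heart of the argument is the compatibility of virtual pushforward with the flat pullback from $BG$ to $B$.
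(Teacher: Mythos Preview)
Your proposal is correct and follows exactly the approach the paper intends: the text introduces the corollary as ``the consequence of Proposition~\ref{p:2.0.4} in numerical form'' and defers details to \cite{MP}, and your argument---factor the insertions through the cartesian square~\eqref{e:2.1.1}, substitute the virtual class via Proposition~\ref{p:2.0.4}, then apply the projection formula and flat base change along $f:B\to BG$---is precisely the unpacking of that sentence. Your remark that the base-change step requires interpreting $BG$ via equivariant approximation (or equivariant intersection theory) is the one genuine subtlety, and it is appropriately flagged.
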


We will not make explicit use of this corollary. 
The interested readers may consult \cite{MP} for details.

\begin{prop} \label{prop:bir}
Given a birational map $x: B' \to B$ between smooth projective varieties, let ${E'}^{\dagger}$ be the fiber product in the following diagram
\[
\xymatrix{
{E'}^{\dagger} \ar[r]^{y}\ar[d] & E^{\dagger}\ar[d]\\
B' \ar[r]^{x} & B,
}
\]
and
\[
\xymatrix{
 \mbar_{0,n}({E'}^{\dagger}, \beta) \ar[r]^{\bar{y}}  
 &\mbar_{0,n}( E^{\dagger}, \beta) }
\]
be the induced map. Then for any fiber curve $\beta$, we have
\[
 \bar{y}_*([\mbar_{0,n}({E'}^{\dagger}, \beta)]^{\vir}
 =[\mbar_{0,n}( E^{\dagger}, \beta)]^{\vir}.
\]
\end{prop}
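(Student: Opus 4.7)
The plan is to lift both virtual classes to pullbacks of a single universal virtual class on $\mathcal{M} := \mbar_{0,n}([F^\dagger/G]/BG,\beta)$, so that the assertion reduces to a projection-formula identity combined with the birationality of $x$. First I would assemble the diagram
\[
\xymatrix{
\mbar_{0,n}({E'}^\dagger,\beta) \ar[r]^-{\bar{y}}\ar[d]_{p'} & \mbar_{0,n}(E^\dagger,\beta) \ar[r]^-{\bar{f}}\ar[d]_{p} & \mathcal{M} \ar[d]^{\bar{\pi}} \\
B' \ar[r]^{x} & B \ar[r]^{f} & BG
}
\]
and note that both inner squares are Cartesian. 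A morphism from a scheme to $BG$ is smooth if and only if it classifies a $G$-bundle, and $G$-bundles pull back; hence $f\circ x\colon B'\to BG$ is smooth, so the verticals $p,p'$ are smooth (as base changes of the smooth $\bar{\pi}$), while $\bar{y}$ is proper and birational (as the base change of $x$). Since $\beta$ is a fiber class in genus zero, I may identify the absolute and relative moduli with identical virtual classes on both the $E$ and $E'$ sides, as noted after Corollary~\ref{c:2.0.5}.

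Applying Proposition~\ref{p:2.0.4} to $f$ and to $f\circ x$ then gives
\[
[\mbar_{0,n}(E^\dagger,\beta)]^{\vir}=\bar{f}^{\,*}[\mathcal{M}]^{\vir},\qquad [\mbar_{0,n}({E'}^\dagger,\beta)]^{\vir}=(\bar{f}\circ\bar{y})^{*}[\mathcal{M}]^{\vir},
\]
where both $\bar{f}$ and $\bar{f}\circ\bar{y}$ are smooth pullbacks. Thus the claim reduces to verifying the identity $\bar{y}_{*}(\bar{f}\circ\bar{y})^{*}[\mathcal{M}]^{\vir}=\bar{f}^{\,*}[\mathcal{M}]^{\vir}$.

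For the final step I would argue by \'etale-local trivialization on $B$. Over a trivializing open $U\subset B$ for the $G$-bundle one has $\mbar_{0,n}(E^\dagger,\beta)|_{U}\cong U\times\mbar_{0,n}(F^\dagger,\beta)$ and $\mbar_{0,n}({E'}^\dagger,\beta)|_{x^{-1}(U)}\cong x^{-1}(U)\times\mbar_{0,n}(F^\dagger,\beta)$, the virtual classes are the exterior products $[U]\boxtimes[\mbar_{0,n}(F^\dagger,\beta)]^{\vir}$ and $[x^{-1}(U)]\boxtimes[\mbar_{0,n}(F^\dagger,\beta)]^{\vir}$ respectively, and $\bar{y}|_{U}=x|_{x^{-1}(U)}\times\on{id}$. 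The K\"unneth-type behavior of pushforward, together with $x_{*}[B']=[B]$ (as $x$ is birational between smooth projective varieties), produces the identity locally, and patching via base change in the left Cartesian square with smooth verticals upgrades this to the global statement. The main technical obstacle I foresee is the global formulation of this local product argument—in particular, pulling $[\mathcal{M}]^{\vir}$ back along the non-flat map $\bar{y}$—which is precisely what the \'etale-local trivialization of the $G$-bundle sidesteps by reducing to a product situation where the virtual class is transparent.
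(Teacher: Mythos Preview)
Your reduction via Proposition~\ref{p:2.0.4} is sound: both virtual classes are indeed smooth pullbacks of $[\mathcal{M}]^{\vir}$ along $\bar f$ and $\bar f\bar y$. But two of your auxiliary claims are false, and the final step has a genuine gap. First, $\bar\pi\colon\mathcal{M}\to BG$ is not smooth in general (moduli of stable log maps are typically singular), so $p,p'$ are not smooth either; what is true is that the \emph{horizontal} maps $\bar f$ and $\bar f\bar y$ are smooth, being base changes of $f$ and $fx$. Second, $\bar y$ need not be birational: base change of a birational map along the non-flat $p$ can acquire components lying entirely over the exceptional locus of $x$. More seriously, your \'etale-local patching does not globalize. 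Agreement of two Chow classes on an open cover does \emph{not} force global equality: already on $\bp^1$ the class of a point restricts to zero on both standard affine charts, and the same phenomenon survives in your product situation $U_i\times\mbar_{0,n}(F^\dagger,\beta)$. There is no ``smooth verticals'' repair for this.

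The paper sidesteps the issue by working over $B\times\Tor_{\mathfrak{M}_{0,n}}$ instead of over $BG$. One forms the Cartesian square
\[
\xymatrix{
\mbar_{0,n}({E'}^{\dagger}/B',\beta)\ar[r]^{\bar y}\ar[d] & \mbar_{0,n}(E^{\dagger}/B,\beta)\ar[d]\\
B'\times\Tor_{\mathfrak{M}_{0,n}}\ar[r] & B\times\Tor_{\mathfrak{M}_{0,n}},
}
\]
observes that $y^{*}\Omega_{E^{\dagger}/B}\simeq\Omega_{{E'}^{\dagger}/B'}$ so the vertical perfect obstruction theories are compatible, and notes that the bottom arrow is projective of degree~$1$. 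Then \cite[Theorem~4.1(3)]{vpb} (virtual pullback commutes with proper pushforward) gives the conclusion in one line. If you wish to rescue your route, the missing ingredient is precisely this commutation, or equivalently the interpretation of the virtual class as a bivariant class for $p$ together with the axiom that bivariant classes commute with proper pushforward; once you invoke that, your argument collapses into the paper's.
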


\begin{proof}
Consider the cartesian diagram
\[
\xymatrix{
 \mbar_{0,n}({E'}^{\dagger}/B', \beta) \ar[r] \ar[d]
 &\mbar_{0,n}( E^{\dagger}/B, \beta) \ar[d]\\
 B' \times \Tor_{\mathfrak{M}_{0,n}} \ar[r] & B \times \Tor_{\mathfrak{M}_{0,n}}}
\]
induced from 
\[
\xymatrix{
\mbar_{0,n}({E'}^{\dagger}/B', \beta)\ar[r]\ar[d] 
&\mbar_{0,n}( E^{\dagger}/B, \beta) \ar[d]\\
B' \times \mathfrak{M}_{0,n} \ar[r] &B \times \mathfrak{M}_{0,n}.
}
\]
Since 
$$
y^*\Omega_{E^{\dagger}/B} \simeq \Omega_{{E'}^{\dagger}/B},
$$ 
the perfect obstruction theories for the vertical arrows are compatible, and the bottom arrow is projective of degree 1. Therefore
\[
\bar{y}_*([\mbar_{0,n}({E'}^{\dagger}, \beta)]^{\vir}
 =[\mbar_{0,n}( E^{\dagger}, \beta)]^{\vir}
\]
by \cite[Theorem 4.1 (3)]{vpb}.
\end{proof}

\section{Invariants of a $\bp^1$ bundle in terms of those of the base}
\label{s:3}

We discuss the relation of the genus zero relative and rubber invariants of a $\bp^1$ bundle with those of its base. The main tools used here are C.~Manolache's virtual pullback and pushforward \cite{vpb, vpf}.

In \cite[Section 5.4]{vpb}, the absolute invariants of a $\bp^1$ bundle and its base are related by the strong virtual pushforward property as in \cite{vpf}. We adapt the arguments there and establish similar results for relative and rubber invariants. For rubber invariants, the rubber calculus in \cite{MP} is also used.

\subsection{Terminologies and notations} \label{s:3.1}

Let $X$ be a smooth projective variety and $L$ a line bundle on $X$. Let $Y=\bp_X(L \oplus \oo)$, which has a natural projection 
$$
\pi:Y \to X.
$$ 
$\pi$ has two sections $Y_0, Y_\infty$. Denote by 
$$
i_0: Y_0 \to Y \quad \text{and} \quad i_{\infty}: Y_{\infty} \to Y
$$ 
the inclusions. 

Recall some terminologies used in \cite{MP}. Relative invariants coming from  $(Y, Y_0)$ and $(Y,Y_{\infty})$ are called \emph{type I}; those from $(Y,Y_0,Y_{\infty})$ are called \emph{type II}.

A variant of type II relative invariants are the invariants of the non-rigid targets, called \emph{rubber targets}, whose relative invariants are called \emph{rubber invariants}.
See \cite[Section~2.4]{GV} and \cite[Section~1.5]{MP} for precise definitions and references.
The rubbers naturally occur in the expanded targets of the usual relative maps. 

A cohomology class of the form ${i_0}_*(\omega)$ or ${i_\infty}_*(\omega)$ is called \emph{distinguished}. Note that 
\[
  [Y_0]\cdot \pi^*\alpha = {i_0}_*(\alpha), \qquad 
  [Y_{\infty}] \cdot \pi^*\alpha = {i_{\infty}}_*(\alpha).
\]
In this section, we will use $\omega$ to denote a \emph{non-distinguished} class, i.e.\ 
$$\omega \in \pi^* H^*(X) \subset H^*(Y).$$

\subsection{Relative invariants with rigid targets}

\subsubsection{Log notations} We have 
$$
A_1(Y)= {i_0}_*A_1(X) \oplus \mathbb{Z}[\bp^1],
$$ 
where $[\bp^1]$ is the class for the fiber of $\pi$. For an effective curve class $\beta$ of $Y$, it is determined by $\theta=\pi_*(\beta)$ and $\int_\beta Y_{\infty}$ by 
\[ 
\beta= {i_0}_*(\theta) + \Big(\int_\beta Y_\infty \Big)[\bp^1]. 
\]

We use $(Y, Y_0, Y_\infty)$ to denote the log scheme whose underlying scheme is $Y$ equipped with the  divisorial log structure determined by the divisor $Y_0\sqcup Y_\infty$. 
Locally it is the product of $U$ (a Zariski open subset of $Y$) with the trial log structure and the log scheme $(\bp^1, \{0\}, \{\infty\})$. Similarly we have a log scheme $(Y, Y_\infty)$. 
They are log smooth and integral.

Let  $\mbar_{0,n}((Y, Y_0, Y_\infty), \beta; \mu, \nu)$ be the log stack of  stable log maps from genus zero , $n$-pointed log curves to $(Y, Y_0, Y_\infty)$. Here $\beta$ is the curve class, $\mu$ a partition of $d_0=\int_\beta Y_0$ and $\nu$ a partition of $d_\infty=\int_\beta  Y_\infty$. This is equivalent to specifying the contact orders of the marked points with $Y_0$ and $Y_\infty$ (see \cite[Section 3.2]{AC} and \cite{GS}). As $\mu$ and $\nu$ encode the log structure we are considering on $Y$ and $\nu$ determines $d_\infty$, we will use the notation $\mbar_{0,n}(Y;\mu, \nu)$ when $\theta=\pi_*(\beta)$ is clear from the context.

For  relative invariants of $(Y,Y_\infty)$ with class 
$$
\beta = {i_0}_*(\theta) + d_\infty[\bp^1] 
$$
and a partition $\nu$ of $d_\infty$, we have the log stack 
$$
\mbar_{0,n}((Y,Y_\infty),\beta ;\nu)
$$ 
or $\mbar_{0,n}(Y;\nu)$ for short.

\subsubsection{A virtual dimension count}

View $X$ as a log scheme with the trivial log structure. The projections
$$
(Y, Y_0, Y_\infty) \to X \quad \text{and} \quad (Y, Y_\infty) \to X
$$ 
are log maps. When $\theta$ is nonzero or $n\ge 3$, we have induced maps between log stacks:
\begin{equation} \label{e:pq}
\begin{split}
 &p^X: \mbar_{0,n}( Y; \mu, \nu) \to \mbar_{0,n}(X, \theta), \\
 &q^X:\mbar_{0,n}( Y; \nu) \to \mbar_{0,n}(X, \theta).
\end{split}
\end{equation}
The following lemma follows from virtual dimension count.

\begin{lemma} \label{dim-count}
\mbox{}
\begin{enumerate}
\item $\dim\, [\mbar_{g, n}( Y; \mu, \nu)]^{\vir} = \dim\,[\mbar_{g, n}(X, \theta)]^{\vir} + 1 - g$.

\item $\dim\, [\mbar_{g, n}(Y;\nu)]^{\vir} =  \dim\,[\mbar_{g, n}(X, \theta)]^{\vir} + 1 - g +\int_\beta Y_0.$
\end{enumerate}
\end{lemma}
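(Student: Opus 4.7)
The plan is to derive both identities as straightforward virtual dimension counts, comparing the standard log/relative Gromov--Witten virtual dimension on $Y$ with its counterpart on $X$.

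First I would recall that for a log smooth pair $(V,D)$ with $D$ a smooth (or simple normal crossings) divisor, and with $n$ denoting the total number of marked points (relative plus non-relative), the virtual dimension of the log stable map moduli is
\[
 \int_\beta c_1(T_V) - \int_\beta D + (1-g)(\dim V - 3) + n,
\]
coming from the fact that the logarithmic tangent bundle $T_V(-\log D)$ has first Chern class $c_1(T_V) - D$. In both parts of the lemma the $n$ on the $Y$ side and the $n$ on the $X$ side match, since the maps $p^X, q^X$ in \eqref{e:pq} preserve the total number of marked points: a relative marked point on $Y$ just becomes an ordinary marked point on $X$ after composing with $\pi$.

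Next I would compute the relevant Chern data for $Y = \bp_X(L \oplus \oo)$. Since $Y_0$ and $Y_\infty$ are two disjoint sections of the $\bp^1$-bundle $\pi : Y \to X$, the Euler sequence for $\bp(L \oplus \oo)$ gives the identity $c_1(T_{Y/X}) = Y_0 + Y_\infty$ (on a fiber this recovers $c_1(T_{\bp^1}) = 2[\mathrm{pt}]$, and the disjointness of the two sections globalizes it). Consequently,
\[
 \int_\beta c_1(T_Y) = \int_\theta c_1(T_X) + d_0 + d_\infty,
\]
where $d_0 = \int_\beta Y_0$ and $d_\infty = \int_\beta Y_\infty$, using $\pi_* \beta = \theta$. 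For part (1), the relative divisor is $D = Y_0 \sqcup Y_\infty$, and the contribution $\int_\beta D = d_0 + d_\infty$ cancels exactly with the $Y_0 + Y_\infty$ piece of $\int_\beta c_1(T_Y)$; combining this with $\dim Y = \dim X + 1$ (which shifts $(1-g)(\dim V - 3)$ by $(1-g)$) yields
\[
 \int_\theta c_1(T_X) + (1-g)(\dim X - 3) + n + (1-g) = \dim [\mbar_{g,n}(X, \theta)]^{\vir} + 1 - g.
\]
For part (2) the relative divisor is only $Y_\infty$, so only $d_\infty$ is subtracted; the surplus $d_0 = \int_\beta Y_0$ then remains, giving exactly the stated formula.

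The computation is routine and I do not expect a genuine obstacle here. The only mild subtleties are verifying $c_1(T_{Y/X}) = Y_0 + Y_\infty$ for the split rank two projective bundle and keeping consistent bookkeeping that $n$ denotes the \emph{total} number of marked points on both sides of $p^X$ and $q^X$; everything else is a direct substitution into the standard log virtual dimension formula.
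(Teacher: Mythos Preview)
Your proof is correct and follows exactly the approach the paper indicates: the paper simply states that the lemma ``follows from virtual dimension count'' and gives no further argument, so your explicit computation using $c_1(T_{Y/X}) = [Y_0] + [Y_\infty]$ and the log virtual dimension formula is precisely the intended verification, just written out in detail.
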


When $g = 0$, the lemma suggests we might prove strong virtual pushforward properties for $p^X$ and, when $1+ \int_\beta Y_0\ge 0$, for $q^X$.

\subsubsection{Compatibility of obstruction theories}
Let $X$ and  $X'$ be log smooth projective varieties. Consider a strict map 
$$
i: X \to X';
$$
assume the underlying map of $i$ is either a closed immersion or induces an injective map on the Chow group $A_1$ as in \cite[Section 5]{vpb}.

The map $i$ induces a map between log stacks 
$$
\bar{i}:\mbar_{0,n}(X) \to \mbar_{0,n}(X'),
$$ 
where $\mbar_{0,n}(X)$ (resp.\ $\mbar_{0,n}(X')$) is the log stack of stable log maps to $X$ (resp.\ $X'$) from genus zero, $n$-pointed log curves . (We do not specify curve class or contact orders for ease of notation.) By our assumption on $i$, there is a commutative diagram
\[
\xymatrix@!R=.5pc{
\mbar_{0,n}(X) \ar[rr]^-{\bar{i}} \ar[dr]_{\ft}&&\mbar_{0,n}(X')\ar[dl]^{\ft'}\\
&\mathfrak{M}_{0,n}&
}\]
and the horizontal arrow is strict. This induces a commutative diagram between stacks
\[ 
\label{comm} 
\xymatrix@!R=.5pc{ 
\mbar_{0,n}(X) \ar[rr]^-{\bar{i}}\ar[dr] &&\mbar_{0,n}(X')\ar[dl]\\
&\Tor_{\mathfrak{M}_{0,n}}
}.
\] 
Define
$$
E_{\bar{i}}^{\vee} := \mathbb{R}\pi_*f^*(\bL_i^{\vee}),
$$ 
where $\pi, f$ are maps  from the universal curve $\mathcal{C}$ over $\mbar_{0,n}(X)$ 
in the following diagram
\[
\xymatrix{
\mathcal{C} \ar[r]^{f}\ar[d]^{\pi} & X\\
\mbar_{g,n}(X).&
}
\]
Then it is straightforward to check we have compatible obstruction theories 
\begin{equation} \label{e:3.2.1}
\xymatrix{
\bar{i}^* E' \ar[r] \ar[d] & E \ar[r]\ar[d] &  E_{\bar{i}}\ar[d]\\
\bar{i}^*\bL_{\ft'}   \ar[r]  &   \bL_{\ft}\ar[r]& \bL_{\bar{i}} \ ,
}
\end{equation}
where 
$$
E \to \bL_{\ft'} \quad \text{and} \quad E' \to \bL_{\ft}
$$
are the perfect obstruction theories for $\ft$ and $\ft'$ respectively.

The bottom row of \eqref{e:3.2.1} can be identified with the transitivity triangle of 
Olsson's log cotangent complexes, while the top row is related to the  transitivity triangle on $X$
$$\bar{i}^*\Omega_{X'} \to \Omega_X \to \bL_i .$$

\subsubsection{Strong virtual pushforward property}
The proof of the following proposition is modeled on \cite[Proposition 5.22 and Corollary 5.27]{vpb},
where the absolute invariants are treated.

\begin{prop} \label{prop:svpf}
Let $p^X, q^X$ be morphisms defined in \eqref{e:pq}. We have
\begin{enumerate}
\item
\[
(p^X)_*[\mbar_{0,n}(Y;\mu, \nu)]^{\vir} = 0 \quad\text{in} \quad A_*(\mbar_{0,n}(X, \theta))\]
and 
\[
(p^X)_*([\mbar_{0,n}(Y;\mu, \nu)]^{\vir} \cap ev_1^*[Y_0]) = N(\mu,\nu) [\mbar_{0,n}(X, \theta)]^{\vir},
\]
where $N(\mu,\nu)$ is a rational number determined by $\mu$ and $\nu$. 

\item
Assume $\int_\beta Y_0 \ge 0$, then
\[
(q^X)_*[\mbar_{0,n}(Y;\nu)]^{\vir}=0 .
\]
\end{enumerate}
\end{prop}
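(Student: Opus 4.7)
The plan is to apply Cristina Manolache's strong virtual pushforward theorem \cite{vpf} to the morphisms $p^X$ and $q^X$, using the framework of compatible obstruction theories established in \eqref{e:3.2.1}. This parallels how \cite[Proposition 5.22, Corollary 5.27]{vpb} treats the absolute case; the novelty is handling the log structure coming from $Y_0$ and $Y_\infty$.

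First I would verify that the strict morphism $\pi \colon (Y, Y_0\cup Y_\infty) \to X$ (resp.\ $\pi \colon (Y,Y_\infty)\to X$, with $X$ given the trivial log structure) satisfies the hypotheses of the compatibility setup preceding the proposition. Since the underlying $\pi$ is a $\mathbb{P}^1$-bundle, $A_1$ injects; so the diagram \eqref{e:3.2.1} supplies a three-term exact triangle of perfect obstruction theories for $p^X$ (resp.\ $q^X$) over its target. The relative term $E_{\bar\pi}^{\vee}$ is computed from the log relative cotangent sheaf $\Omega_{(Y,Y_0\cup Y_\infty)/X}$, which is a line bundle, and analogously for $(Y,Y_\infty)/X$. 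A routine computation using $R\pi_* f^*$ on the universal curve (in genus zero) shows that the relative virtual dimensions agree with the count of Lemma~\ref{dim-count}: namely $1$ for $p^X$, and $1+\int_\beta Y_0$ for $q^X$.

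Having set up a perfect relative obstruction theory of positive rank, Manolache's strong virtual pushforward property immediately gives the codimension-zero vanishing in both (1) and (2): the pushforward of $[\mbar_{0,n}(Y;\mu,\nu)]^{\vir}$ and $[\mbar_{0,n}(Y;\nu)]^{\vir}$ vanishes since the relative virtual dimensions are strictly positive. For the second equality in (1), we cap with the codimension-one class $\ev_1^*[Y_0]$, which lands in the top non-trivial codimension of the strong virtual pushforward. The theorem then forces
\[
 (p^X)_*([\mbar_{0,n}(Y;\mu, \nu)]^{\vir} \cap \ev_1^*[Y_0]) = N(\mu,\nu)\,[\mbar_{0,n}(X, \theta)]^{\vir}
\]
for some rational number $N(\mu,\nu)$. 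To identify $N(\mu,\nu)$, one may restrict to the fiber of $p^X$ over a generic point of $\mbar_{0,n}(X,\theta)$ (where the map under consideration is a contracted rational curve in the base), so that the calculation reduces to a fiber integral on the moduli of relative stable maps to $\mathbb{P}^1$ with contact profile $(\mu,\nu)$; since this depends only on $(\mu,\nu)$ and not on $X$ or $\theta$, the claim follows.

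The main obstacle will be justifying the compatibility triangle and the hypotheses of \cite{vpf} in the log-to-strict setting: one must pass through the strictification via $B \times \Tor_{\mathfrak{M}_{0,n}}$ as in Proposition~\ref{p:2.0.4}, check that virtual pullback commutes with the auxiliary flat base change, and then reinterpret the result back on the usual moduli of log stable maps. Once this bookkeeping is in place, the dimensional vanishing is automatic and the remaining content — the proportionality constant $N(\mu,\nu)$ — is a localized fiber computation rather than a structural issue.
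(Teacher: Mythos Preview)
Your overall plan—invoke Manolache's strong virtual pushforward after exhibiting a compatible triple of obstruction theories—is the right framework, and it is what the paper does. However, your implementation has a genuine gap: the projection $\pi\colon (Y,Y_0\cup Y_\infty)\to X$ is \emph{not} strict (the source carries the nontrivial divisorial log structure while the target has the trivial one), so the compatibility diagram \eqref{e:3.2.1} does not apply to $\pi$ and hence does not directly furnish a perfect relative obstruction theory for $p^X$. Your acknowledgment of a ``log-to-strict'' obstacle is on target, but strictification via $\Tor_{\mathfrak{M}_{0,n}}$ alone does not repair this: the issue is the non-strictness of the target map itself, not merely bookkeeping over the base.

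The paper's proof sidesteps this by working \emph{horizontally} rather than vertically. One chooses very ample line bundles so that $L$ is pulled back from $\mathcal{O}(-1,1)$ under an embedding $i\colon X\hookrightarrow \mathbb{P}^{|M|}\times\mathbb{P}^{|L\otimes M|}$, inducing a strict embedding $j\colon (Y,Y_0\cup Y_\infty)\hookrightarrow \mathbb{P}(\mathcal{O}(-1,1)\oplus\mathcal{O})$ with matching divisorial log structures. Now \eqref{e:3.2.1} applies to the \emph{strict} maps $i$ and $j$; since the ambient base is homogeneous (convex), $E_{\bar i}$ is perfect in $[-1,0]$ and a virtual pullback $\bar i^{!}$ exists taking both virtual classes to the corresponding ones over $X$. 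Over the homogeneous base the strong virtual pushforward for the analogue $p$ of $p^X$ holds essentially for free, and one transfers it to $p^X$ via $\bar i^{!}$. The constant $N(\mu,\nu)$ is then read off by pulling back along a line $\mathbb{P}^1\to\mathbb{P}^{|M|}\times\mathbb{P}^{|L\otimes M|}$, reducing to a relative computation on $\mathbb{P}(\mathcal{O}(d_0-d_\infty)\oplus\mathcal{O})\to\mathbb{P}^1$—which is in the spirit of your fiber-restriction idea, but made precise through the same cartesian-diagram machinery. If you want to salvage a direct argument, you would need to construct by hand a perfect relative obstruction theory for $p^X$ compatible with the absolute ones over $\Tor_{\mathfrak{M}_{0,n}}$ and $\mathfrak{M}_{0,n}$; this is plausible but is real work not covered by \eqref{e:3.2.1}.
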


\begin{proof}
We will prove the strong virtual pushforward property \cite[Definition~4.1]{vpf} for $p^X$ and $q^X$,
which consists of mainly checking certain compatibility of perfect obstruction theories.
Then the above equations follow from the virtual dimension counts in Lemma~\ref{dim-count}.

For (1), we embed $X$ into a homogeneous variety. 
Choose two line bundles $M$ and $L$ on $X$ such that both $M$ and $L\otimes M$ are very ample. 
These line bundles induce an embedding
\[
i:X \to \bp^{|M|} \times \bp^{|L \otimes M|}
\] 
such that $L$ is the pullback of $\oo(-1,1)$ on $ \bp^{|M|} \times \bp^{|L \otimes M|}$.
Then we have a cartesian diagram
\[
\xymatrix{
Y \ar[r]^-j\ar[d] & \bp( \oo(-1,1) \oplus \oo )\ar[d]\\
X \ar[r]^-i & \bp^{|M|} \times \bp^{|L \otimes M|},
}
\]
which induces a cartesian diagram between log stacks
\[
\xymatrix{
\mbar_{0,n}\left( Y;\mu, \nu \right) \ar[r]^-{\bar{j}}\ar[d]^{p_X}&
\mbar_{0,n}\left ( \bp(\oo(-1,1) \oplus \oo);\mu, \nu\right) \ar[d]^{p}\\
\mbar_{0,n}\left(X, \theta \right) \ar[r]^-{\bar{i}}& 
\mbar_{0,n} ( \bp^{|M|} \times \bp^{|L \otimes M|}, { (\int_\theta  M  ,\int_\theta  L\otimes M) }).
}
\]
Here we use $p$ for $p^{\bp^{|M|} \times \bp^{|L \otimes M|}}$ and $\bar{i}, \bar{j}$ to denote the horizontal maps. 
As $\bar{i}$ is strict, the underlying diagram between stacks is also cartesian.

Recall we have defined obstruction theory $E_{\bar{i}}$ (resp.\ $E_{\bar{j}}$) for $\bar{i}$ (resp.\ $\bar{j}$) in \eqref{e:3.2.1}. 
They fit in the following diagram 
\[
\xymatrix{
p_X^*E_{\bar{i}}\ar[r]^-\approx\ar[d]& E_{\bar{j}}\ar[d]\\
p_X^*\bL_{\bar{i}} \ar[r]  &\bL_{\bar{j}} ~.
}
\] 
(cf.~\cite[Propsition 5.4 (ii)]{vpb}.)

As $\bp^{|M|} \times \bp^{|L \otimes M|}$ is homogeneous,  $E_{\bar{i}}$ is perfect in $[-1,0]$. This implies there exists a virtual pullback $\bar{i}^!$ such that
\[
\bar{i}^![\mbar_{0,n} ( \bp^{|M|} \times \bp^{|L \otimes M|}, \mbox{$(\int_\theta  M  ,\int_\theta  L\otimes M)$})]^{\vir}
=[\mbar_{0,n}\left(X, \theta \right)  ]^{\vir}
\]
and 
\[
\bar{i}^! [
\mbar_{0,n}\left ( \bp(\oo(-1,1) \oplus \oo);\mu, \nu\right)]^{\vir}
=[\mbar_{0,n}\left( Y;\mu, \nu \right) ]^{\vir}.
\]
Note that by Lemma \ref{dim-count}, $p$ satisfies the strong virtual pushforward property since $\bp^{|M|} \times \bp^{|L \otimes M|}$ is homogeneous. Then we can transfer this property to $p^X$ using $\bar{i}^!$.

To determine the number $N$, consider a point
\[l:\bp^1 \to \bp^{|M|} \times \bp^{|L \otimes M|}\] 
in
\[\mbar_{0,n}( \bp^{|M|} \times \bp^{|L \otimes M|}, {(\int_{\pi_*{\beta} }M  ,\int_{\pi_*{\beta} }L\otimes M)} ).\]
As 
\[A_1(\bp^1) \to A_1(\bp^{|M|} \times \bp^{|L \otimes M|})\] is injective,
we have a cartesian diagram \small
\[
\xymatrix{
\mbar_{0,n}\left( \bp(\oo(d_0-d_\infty) \oplus \oo);\mu, \nu \right) \ar[r]\ar[d]^{p_{\bp^1}}&
\mbar_{0,n}\left ( \bp(\oo(-1,1) \oplus \oo);\mu, \nu\right) \ar[d]^{p}\\
\mbar_{0,n}(\bp^1, 1) \ar[r]^-{\bar{l}}& 
\mbar_{0,n}( \bp^{|M|} \times \bp^{|L \otimes M|}, {(\int_\theta M  ,\int_\theta L\otimes M)}).
}
\] \normalsize
Note that $ \int_\theta L =d_0 - d_\infty$.

Let 
$$\bp=\bp(\oo(d_0-d_\infty) \oplus \oo). $$
The number $N$ is determined by
\[
(p^{\bp^1})_*\left( [\mbar_{0,n}( \bp ;\mu, \nu)]^{\vir}\cap ev_1^*[\bp_0]\right) 
= N(\mu,\nu)[\mbar_{0,n}(\bp^1, 1)]^{vir}.
\]
This completes the proof of (1).
The proof of (2) is entirely similar and is omitted.
\end{proof}

\begin{remark}
Using localization, one can show similar vanishing results. Consider the fiberwise $C^*$ action on $Y$ and the trivial action on $X$. Under these actions,  $\pi:Y \to X $ is $\cc^*$ equivariant, and the induced map $p^X$ and $q^X$ are $\cc^*$ equivariant. Assume $\beta$ satisfies $\int_\beta Y_0\ge 0,  \int_\beta Y_\infty \ge 0$.

For type II invariants, assume $2g-2+n>0$, then the pushforward of the equivariant virtual class under $p^X$ lies in $t^{-1} A_*(\mbar_{g,n}(X, \gamma))[\![t^{-1}]\!]$ by dimension count. Since pushforward of equivariant class should be an equivariant class, i.e.\ in $A_*(\mbar_{g,n}(X, \gamma))[\![t ]\!]$, it must vanish.

For type I, when $g=0, n\ge 3$, the pushforward of the equivariant virtual class under  $q^X$ lies in $t^{-1} A_*(\mbar_{0,n}(X, \gamma))[\![t^{-1}]\!]$ by dimension count and therefore vanishes.
\end{remark}

\subsubsection{Special two-pointed fiber integrals}

When $\theta$ is zero, $\beta$ is a fiber class for $\pi$
and $d_0=d_\infty >0$ for type II invariants.
In particular, 
\[
 n \ge l(\mu)+l(\nu) \ge 1+1= 2.
\]
If $n=2$, we see $l(\mu) =l(\nu)=1$ 
and there are no non-relative marked points.
Let $d := d_0 =d_\infty$.

\begin{lemma} \label{lem:rootst}
When the partitions $\mu = (d), \nu =(d)$ are totally ramified
and $\beta$ a fiber class,
$\mbar_{0,2}( Y; (d), (d))$ is isomorphic to the root stack $\sqrt[d]{L/X}$ 
(\cite[Appendix B.1]{AGV}).
In particular, it is smooth with virtual class equal to its fundamental class.
Consequently, for $\alpha_1, \alpha_2 \in H^*(X)$,
\[
\int_{[\mbar_{0,2}( Y; (d), (d))]^{\vir}} \ev_1^*(\alpha_1) \cup \ev_2^*(\alpha_2) = \frac{1}{d}\int_X \alpha_1 \cup \alpha_2.
\]
\end{lemma}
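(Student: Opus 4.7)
My plan is to identify the moduli space with the root stack directly and then read off smoothness, the virtual class, and the integral.

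Step one: invoke the comparison theorems of \cite{ACW, AMW} (cf.~Remark~\ref{r:0.3.1}) to replace the genus zero log/relative moduli $\mbar_{0,2}(Y;(d),(d))$ with the orbifold moduli $\mbar_{0,2}(\mathcal{Y}_d)$, where $\mathcal{Y}_d$ is the $d$-th root stack of $Y$ along $Y_0 \sqcup Y_\infty$. The orbifold viewpoint makes the gerbe structure transparent; moreover, since we are in genus zero with a specified fiber class, the two perspectives yield isomorphic virtual classes.

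Step two: analyze the maps pointwise. Because $\beta = d[\bp^1]$ is a fiber class, the image of any such map lies in a single fiber $F_x = \pi^{-1}(x) \cong \bp^1$, hitting $0 = F_x \cap Y_0$ and $\infty = F_x \cap Y_\infty$ with ramification index $d$ at both marked points. Riemann--Hurwitz gives
\[
 2g_C - 2 = -2d + 2(d-1) + R,
\]
so $g_C = 0$ forces $R = 0$: the source is irreducible of genus zero and (after a coordinate choice) the map is $z \mapsto z^d$. Its automorphism group as a stable map is $\mu_d$, acting on the source by $z \mapsto \zeta z$. Hence $\mathcal{M} := \mbar_{0,2}(Y;(d),(d))$ is a $\mu_d$-gerbe over $X$ via the map induced by $\ev_1$ composed with $\pi|_{Y_0}\colon Y_0 \xrightarrow{\sim} X$.

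Step three: identify this gerbe with $\sqrt[d]{L/X}$. Writing $\sigma_1\colon \mathcal{M} \to \mathcal{C}$ for the first marked-point section of the universal curve, set $M := \sigma_1^*T_{\mathcal{C}/\mathcal{M}}$. The derivative of the universal map along the fiber direction, combined with the contact-order-$d$ condition at $\sigma_1$, yields a canonical isomorphism
\[
 M^{\otimes d} \xrightarrow{\sim} \ev_1^* N_{Y_0/Y} = p^* L,
\]
where $p\colon \mathcal{M} \to X$ is the composition $\ev_1$ followed by $\pi|_{Y_0}$; this is precisely the data of an object of $\sqrt[d]{L/X}$. The inverse map (producing the cover $z \mapsto z^d$ of each fiber from a $d$-th root of $L$) is the Cadman root-stack construction, giving the desired $\mathcal{M} \cong \sqrt[d]{L/X}$ over $X$.

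Step four: since $\sqrt[d]{L/X}$ is smooth of dimension $\dim X$, and the virtual dimension
\[
 \int_\beta c_1\bigl(T_Y(-\log(Y_0 + Y_\infty))\bigr) + \dim Y - 3 + 2 = 0 + (\dim X + 1) - 1 = \dim X
\]
coincides with this, the obstruction sheaf vanishes and $[\mathcal{M}]^{\vir} = [\mathcal{M}]$. For the integral, $\ev_2$ likewise factors through $p$ after identifying $Y_\infty \cong X$, so $\ev_i^*\alpha_i = p^*\alpha_i$ and
\[
 \int_{[\mathcal{M}]^{\vir}} \ev_1^*\alpha_1 \cup \ev_2^*\alpha_2 = \int_{\sqrt[d]{L/X}} p^*(\alpha_1 \cup \alpha_2) = \frac{1}{d}\int_X \alpha_1 \cup \alpha_2,
\]
the factor $1/d$ coming from the degree of the $\mu_d$-gerbe $p$.

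The main obstacle will be Step three: pinning down the isomorphism $\mathcal{M} \cong \sqrt[d]{L/X}$ functorially rather than just on points. The orbifold reformulation in Step one is what makes this precise, since the tautological line bundle on $\sqrt[d]{L/X}$ is built into the universal orbifold curve at the stacky marked point; without the ACW/AMW comparison one would need to re-derive this compatibility directly in the log framework, which is more delicate.
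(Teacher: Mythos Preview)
Your proof is correct and reaches the same conclusion, but the route differs from the paper's in two places worth noting.

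First, to show the domain curve is smooth and irreducible, the paper stays in the log setting and uses a pure special-point count: with $v$ contracted and $h$ non-contracted components there are $v+h-1$ nodes, and the inequality $2(v+h-1)+2 \ge 3v+2h$ forces $v=0$. Your Step~2 instead invokes Riemann--Hurwitz, but as written this is slightly circular: Riemann--Hurwitz presupposes a finite morphism from a smooth curve, which is what you are trying to establish. The fix is easy---observe first that each marked point, carrying contact order $d$, must lie on a component of degree $d$, hence both lie on the unique non-contracted component, and then stability kills any contracted tails---but you should say this before applying RH. The paper's counting argument sidesteps this entirely.

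Second, for the identification with $\sqrt[d]{L/X}$, the paper simply asserts it after noting that the log map is determined by its underlying map and that $H^1(C,\mathscr{O}_C)=0$ gives unobstructedness. Your Steps~1 and~3 are more explicit: passing to the orbifold model via \cite{ACW, AMW} and then exhibiting the $d$-th root datum as $M=\sigma_1^*T_{\mathcal{C}/\mathcal{M}}$ with $M^{\otimes d}\cong p^*L$. This buys you a functorial isomorphism rather than a pointwise one, at the cost of importing the comparison machinery. Both are valid; the paper's version is terser, yours is more self-contained on the stack identification. The final integral (Step~4) is handled identically in both.
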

 
\begin{proof}
We show that first that the source curve has no contracted component. 
Assume there are $v$ contracted components and $h$ non contracted components, then there are $v+h-1$ nodes.
Consider the number of  special points (nodes or marked points) on each component.  On a contracted component,
there are at least 3 of them. There are at least 2 special points on a non contracted component, which are points mapped into $Y_0$ and $Y_\infty$.
As each node is counted twice,  we have
\[
 2(v+h-1)+2 \ge 3v + 2h.
\]
Thus $v=0$.
This implies in fact the source curve must be smooth as this is a 
fiber integral with 2 totally ramified relative points.

It is then easy to see a stable log map $C \to Y$
is determined by its underlying map. 
The moduli space being unobstructed follows from 
$$H^1(C, \mathscr{O}_C)=0.$$

Since $\beta$ is a fiber class for $\pi$, the last integral can be
evaluated by first integrating over the fiber of
\[
 e: \mbar_{0,2}( Y; (d), (d)) = \sqrt[r]{L/X}  \to X
\]
which has degree $1/d$.
The last statement then follows.
\end{proof}





\subsection{Rubber invariants}

Let $\mbar_{\Gamma}(Y, \beta; \mu, \nu)^{\bullet \sim}$ be the moduli stack of relative maps to rubber targets. Here $\Gamma$ specifies the discrete data for the genus zero source curve, including the number of components, the curve class of each component, and the distribution of marked points among these components. 

We call a component \emph{unstable} if its curve class is a fiber class for $\pi$, and there are only two relative marked points with no other marked points. 
Otherwise the component is \emph{stable}.
Unstable component might appear for disconnected rubber invariants.

We treat the rubber invariants in two steps.

\begin{lemma} 
Suppose there is no unstable component in $\Gamma$. 
In this case $\pi: Y \to X $ induces 
\[
 r^X: \mbar_{\Gamma}(Y, \beta; \mu, \nu)^{\bullet \sim} \to 
 \mbar_{\Gamma}(X, \theta)^{\bullet}.
\]
Furthermore, under the same assumption, we have
 \[
 \dim [\mbar_{\Gamma}( Y,\beta; \mu, \nu)^{\bullet \sim}]^{ \vir} = 
 \dim[\mbar_{\Gamma}(X, \theta)^{\bullet}]^{\vir}+ c(\Gamma)-1,
\]
where $c(\Gamma)$ is the number of components (of domain curve) in $\Gamma$.
\end{lemma}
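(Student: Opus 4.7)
For the construction of $r^X$: a point of $\mbar_{\Gamma}(Y,\beta;\mu,\nu)^{\bullet\sim}$ is a stable log map to some expanded target $Y[k]$, obtained by inserting $k$ trivial $\bp^1$-bundle levels between $Y_0$ and $Y_\infty$, taken modulo the diagonal scaling $\cc^*$-action on the chain of expansions. Each inserted level is itself a $\bp^1$-bundle over $X$, so extending $\pi$ across the expansion collapses all added rubber levels onto $X$ and produces a map from the same source curve to $X$. The resulting map fails to be stable on a connected component of the source precisely when that component is a genus-zero curve mapping to a $\pi$-fiber with only two relative marked points and no other marked points---exactly the unstable-component condition ruled out by hypothesis. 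On every remaining connected component stability is preserved: either the $\pi$-image curve class is non-zero (stability automatic), or the component already carries at least three relative or non-relative marked points, which persist after applying $\pi$. Hence $r^X$ lands in $\mbar_{\Gamma}(X,\theta)^{\bullet}$ and is well defined.

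For the virtual dimension count: let $\mbar_{\Gamma}(Y,\beta;\mu,\nu)^{\bullet}$ denote the moduli of relative stable maps to the \emph{rigid} target $(Y,Y_0,Y_\infty)$. Summing the formula of Lemma~\ref{dim-count}(1) over the $c(\Gamma)$ connected components (each of genus zero) yields
\[
\dim[\mbar_{\Gamma}(Y,\beta;\mu,\nu)^{\bullet}]^{\vir} = \dim[\mbar_{\Gamma}(X,\theta)^{\bullet}]^{\vir} + c(\Gamma).
\]
The rubber moduli is obtained from the rigid one by further quotienting out the single global scaling $\cc^*$-action on the full expansion chain (acting diagonally on all expansion levels, independently of the number of source components), which lowers the virtual dimension by exactly one. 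This gives the desired formula.

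The step I expect to be the main obstacle is the $\cc^*$-bookkeeping for the disconnected rubber moduli: confirming that one quotients by a single $\cc^*$ acting diagonally on the expansion, rather than by a separate $\cc^*$ per connected component of the source. For this I would appeal to the standard rubber-calculus conventions of \cite{MP} and \cite[Section~2.4]{GV}, in which the rubber moduli is defined by first fixing a single chain expansion of $Y$ and then modding out its overall scaling automorphism. A secondary subtlety, already handled in the construction of $r^X$ above, is ensuring that when the $\pi$-pushforward curve class of an individual source component vanishes, the remaining special points are sufficient to preserve stability---this is the role played by the exclusion of unstable components.
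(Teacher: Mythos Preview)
Your proposal is correct and follows the same approach the paper indicates: the paper's proof merely asserts that the first statement is ``an easy consequence of the definitions'' and the second ``follows from a straightforward dimensional count,'' and you have supplied exactly those details---composing with $\pi$ and checking stability componentwise, then summing Lemma~\ref{dim-count}(1) over the $c(\Gamma)$ genus-zero components and subtracting one for the single global $\cc^*$ rubber quotient. Your identification of the single-versus-per-component $\cc^*$ bookkeeping as the key subtlety is apt and your resolution via the rubber conventions of \cite{MP, GV} is the correct one.
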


\begin{proof}
The first statement is an easy consequence of the definitions of the moduli 
and the second follows from a straightforward dimensional count.
\end{proof}

\begin{prop} \label{prop:svpf'}
When $\Gamma$ does not contain an unstable component, we have
\[
(r^X)_*([\mbar_{\Gamma}(Y,\beta ;\mu, \nu)^{\bullet \sim} ]^{\vir}=
\left \{
\begin{array}{ll}
 R(\mu,\nu) [\mbar_{\Gamma}(X, \theta)^{\bullet}]^{\vir} ,
  &\text{if}  ~c(\Gamma)=1.\\
0 ,& \text{ otherwise}.
\end{array} 
\right.
\]
where $R(\mu,\nu)$ is a rational number determined by $\mu, \nu$.
\end{prop}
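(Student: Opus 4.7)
The plan is to adapt the embedding strategy of Proposition~\ref{prop:svpf}(1) to the rubber setting, invoking the rubber calculus of \cite{MP} where necessary. I would first choose line bundles $M$ and $L \otimes M$ very ample on $X$, giving an embedding $i: X \hookrightarrow \bp^{|M|} \times \bp^{|L \otimes M|}$ under which $L$ is the pullback of $\oo(-1,1)$. This induces a cartesian diagram of rubber moduli stacks
\[
\xymatrix{
\mbar_\Gamma(Y;\mu,\nu)^{\bullet \sim} \ar[r]^-{\bar{j}} \ar[d]^{r^X}
 & \mbar_\Gamma(\bp(\oo(-1,1) \oplus \oo);\mu,\nu)^{\bullet \sim} \ar[d]^{r} \\
\mbar_\Gamma(X,\theta)^\bullet \ar[r]^-{\bar{i}}
 & \mbar_\Gamma(\bp^{|M|} \times \bp^{|L \otimes M|}, \bar{i}_*\theta)^\bullet,
}
\]
and the compatibility of perfect obstruction theories goes through as in diagram~\eqref{e:3.2.1}. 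The key point is that the rubber quotient by $\cc^*$ on the expanded targets is a purely target-side modification, so the cotangent-complex comparison for the maps (which drove the rigid case) is undisturbed. Consequently we obtain a virtual pullback $\bar{i}^!$ exactly as in Proposition~\ref{prop:svpf}(1), relating the virtual classes on both sides.

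Next, I would reduce to the homogeneous-base case via $\bar{i}^!$ and use the dimension formula of the preceding Lemma. The strong virtual pushforward for $r$ holds by the same argument as for $p$ in Proposition~\ref{prop:svpf}(1), since $\bp^{|M|} \times \bp^{|L\otimes M|}$ is homogeneous. The rubber virtual dimension exceeds the base virtual dimension by $c(\Gamma) - 1$. When $c(\Gamma) > 1$, the pushforward of the rubber virtual class therefore lands in Chow classes of strictly higher degree than $[\mbar_\Gamma(X,\theta)^\bullet]^{\vir}$, which on the homogeneous model can be ruled out by dimension; transferring back through $\bar{i}^!$ gives $(r^X)_*[\mbar_\Gamma(Y;\mu,\nu)^{\bullet \sim}]^{\vir} = 0$. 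When $c(\Gamma) = 1$, the virtual dimensions match and the pushforward is forced to be a scalar multiple of $[\mbar_\Gamma(X,\theta)^\bullet]^{\vir}$.

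To extract the constant $R(\mu,\nu)$ in the $c(\Gamma) = 1$ case, one further restricts to a generic $l: \bp^1 \hookrightarrow \bp^{|M|} \times \bp^{|L \otimes M|}$, exactly as in the final step of Proposition~\ref{prop:svpf}(1), and computes on the rubber moduli of $\bp(\oo(d_0 - d_\infty) \oplus \oo) \to \bp^1$. This universal computation produces a rational number depending only on $\mu$ and $\nu$, which can be pinned down via the rubber calculus dictionary with the rigid constant $N(\mu,\nu)$. The main technical obstacle I anticipate is the careful verification that the rubber obstruction theories remain compatible under $\bar{j}$: while this follows the same template as the rigid case, the additional $\cc^*$-quotient on the moduli of expanded targets requires one to check that the infinitesimal scaling on the universal expanded target of $\bp(\oo(-1,1)\oplus\oo)$ pulls back to the scaling on the expanded target of $Y$. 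Once this compatibility is established, the proof is a direct transcription of the rigid argument.
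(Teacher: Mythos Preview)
Your proposal is correct and is precisely the approach the paper takes: the paper's proof reads ``The proof is similar to the one given in Proposition~\ref{prop:svpf},'' and the subsequent remark points to a variant of Kim's log moduli stack adapted to rubber targets (as in \cite{MR}) for the compatibility of perfect obstruction theories---exactly the technical point you flagged. Your elaboration of the embedding into a homogeneous base, the virtual pullback $\bar{i}^!$, the dimension count via $c(\Gamma)-1$, and the extraction of $R(\mu,\nu)$ by restriction to a line is a faithful unpacking of that one-line proof.
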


\begin{proof}
The proof is similar to the one given in Proposition~\ref{prop:svpf}.
\end{proof}

\begin{remark}
One may use a variant of Bumsig Kim's log moduli stack, adapted to rubber targets in \cite{MR}, 
to check compatibilities of perfect obstruction theories in Proposition~\ref{prop:svpf'}.
The log moduli stack of Kim is the saturation of the log moduli stack of Jun Li. 
See \cite[Section 6]{GS}.
\end{remark}

In general, insertions $\omega$ for genus zero rubber invariants
\[
\langle~ 
\vec{\mu} \mid \omega \mid \vec{\nu}~
\rangle^{\bullet Y \sim}_\Gamma
\]
are necessarily non-distinguished.
$\Gamma$ might contain unstable components.
We will show that it can be calculated as a product of contribution from the unstable components and the contribution from the stable component.
The unstable contribution will be treated using Lemma~\ref{lem:rootst} and the stable contribution will be converted to type II invariant with rigid $Y$.

We decompose $\Gamma$ into stable and unstable parts
\[
 \Gamma = \Gamma_s \sqcup \Gamma_u,
\]
where $\Gamma_s$ and $\Gamma_u$) consist of stable and unstable components respectively.
We then divide weighted partitions and insertions accordingly:
\[
 \vec{\mu} =\vec{\mu}_u \sqcup \vec{\mu}_s, \quad \omega= \omega_u \sqcup \omega_s,
 \quad \vec{\nu} =\vec{\nu}_u \sqcup \vec{\nu}_s.
\]

\begin{prop} \label{prop:rubbercalc}
\begin{equation} \label{e:3.2.3}
\langle~ 
\vec{\mu} \mid \omega \mid \vec{\nu}~
\rangle^{\bullet Y \sim}_\Gamma = 
\langle~ 
\vec{\mu}_s \mid \omega_s \mid \vec{\nu}_s~
\rangle^{\bullet Y \sim}_{\Gamma_s} \cdot
\langle~ 
\vec{\mu_u} \mid \omega_u \mid \vec{\nu}_u~
\rangle^{\bullet Y}_{\Gamma_u} ,
\end{equation}
where $\omega$ is a non-distinguished insertion. Furthermore,
$\langle~ 
\vec{\mu}_s \mid \omega_s \mid \vec{\nu}_s~
\rangle^{\bullet Y \sim}_{\Gamma_s}$
is determined by invariants on $X$ by Proposition~\ref{prop:svpf'}, and 
$\langle~ 
\vec{\mu_u} \mid \omega_u \mid \vec{\nu}_u~
\rangle^{\bullet Y}_{\Gamma_u}$  is determined by Lemma~\ref{lem:rootst}.
In particular, a genus zero rubber invariant is nonzero only if there is exactly one stable component 
in $\Gamma$. 
\end{prop}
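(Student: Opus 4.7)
The plan is to establish the factorization \eqref{e:3.2.3} at the level of moduli spaces and their virtual classes, and then to read off both the evaluation of each factor and the vanishing statement from Proposition~\ref{prop:svpf'} and Lemma~\ref{lem:rootst}. Throughout I will use that $\omega$ is non-distinguished, so $\omega = (r^X)^*\omega_X$ for some $\omega_X \in H^*(X)^{\otimes n}$, which lets us push computations down to $X$.

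First, I would decompose the disconnected rubber moduli $\mbar_\Gamma(Y,\beta;\mu,\nu)^{\bullet\sim}$ according to the partition $\Gamma = \Gamma_s \sqcup \Gamma_u$. Because the rubber target is a chain of $\bp^1$-bundles over $X$ modulo an overall $\cc^*$ scaling, stable maps from a disconnected domain split (up to this diagonal scaling) as a product indexed by the connected components of $\Gamma$, and the perfect obstruction theories multiply. The next step is to track how the rubber $\cc^*$ interacts with each unstable component. On such a component the source is a single smooth $\bp^1$ with two totally ramified relative markings lying over a point of $X$; by the proof of Lemma~\ref{lem:rootst}, the stable log map is determined by its underlying map to a fiber of $Y\to X$, and the $\cc^*$ scaling of the rubber on that fiber is cancelled by the $\cc^*$-automorphism of the source $\bp^1$ fixing $0$ and $\infty$. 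Consequently the rubber structure \emph{rigidifies} on each unstable component, yielding the rigid-target factor $\langle~\vec{\mu}_u \mid \omega_u \mid \vec{\nu}_u~\rangle^{\bullet Y}_{\Gamma_u}$ in \eqref{e:3.2.3}.

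On the stable part no such cancellation occurs, and the remaining factor is the honest rubber invariant $\langle~\vec{\mu}_s \mid \omega_s \mid \vec{\nu}_s~\rangle^{\bullet Y\sim}_{\Gamma_s}$ to which Proposition~\ref{prop:svpf'} applies: its pushforward under $r^X$ is $R(\mu_s,\nu_s)[\mbar_{\Gamma_s}(X,\theta_s)^\bullet]^{\vir}$ when $c(\Gamma_s)=1$ and vanishes otherwise. Since $\omega_s$ is pulled back from $X$, the stable rubber invariant can be computed after this pushforward and is therefore determined by invariants on $X$. Meanwhile, the unstable factor is computed by iterating Lemma~\ref{lem:rootst}, producing a product of $\frac{1}{d_i}\int_X(\cdot\cup\cdot)$-type integrals over the unstable components. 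Combining the two factors yields both \eqref{e:3.2.3} and the final assertion: the product vanishes unless $c(\Gamma_s)=1$, i.e., unless there is exactly one stable component.

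The main obstacle, I expect, is making the product decomposition of $\mbar_\Gamma^{\bullet\sim}$ precise at the virtual level while correctly accounting for the diagonal $\cc^*$ scaling that ties the components of the rubber target together—that is, producing a canonical isomorphism between the obstruction theory on the disconnected rubber moduli and the product of the obstruction theories on $\mbar_{\Gamma_s}^{\bullet\sim}$ and $\mbar_{\Gamma_u}^{\bullet}$, and verifying that the scalings absorbed by the unstable $\bp^1$-automorphisms exactly trivialize the rubber $\cc^*$ on the unstable factor. I would handle this using the rubber calculus of \cite[Section~1.5]{MP}, together with its log-theoretic refinement (cf.\ the $\mathcal{T}\!\emph{or}$-style formalism invoked in Section~3), which is designed precisely to move back and forth between rubber and rigid targets at the cycle level.
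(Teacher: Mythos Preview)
Your argument is correct but takes a different route from the paper. You factor the disconnected rubber moduli directly as $\mbar_{\Gamma_s}^{\bullet\sim}\times\mbar_{\Gamma_u}^{\bullet}$ by observing that the single target $\cc^*$ acts trivially on the unstable factor (since on each unstable component the target scaling is absorbed by the source $\cc^*$ fixing the two relative markings), leaving the rubber quotient entirely on the stable factor. The paper instead \emph{rigidifies first}: it uses the rubber calculus of \cite[Section~1.5]{MP} to create, if necessary, an internal marked point on the stable part and convert the full rubber invariant into a rigid type~II invariant; then it applies the ordinary product formula for disconnected rigid moduli to split off the stable and unstable pieces; finally it undoes the rigidification on the stable factor to recover $\langle~\vec{\mu}_s\mid\omega_s\mid\vec{\nu}_s~\rangle^{\bullet Y\sim}_{\Gamma_s}$. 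Your approach is conceptually cleaner and explains \emph{why} the unstable piece is rigid, but it leaves you with the virtual-class compatibility you flag as the main obstacle. The paper's approach sidesteps that verification entirely by reducing to two established black boxes (rubber calculus and the product decomposition for rigid disconnected invariants), at the cost of the somewhat roundabout rigidify/un-rigidify maneuver. Either way, the use of Proposition~\ref{prop:svpf'} and Lemma~\ref{lem:rootst} to evaluate the two factors, and the resulting vanishing unless $c(\Gamma_s)=1$, is the same.
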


\begin{proof}
Note that the stable part $\Gamma_s$ is non-empty by the stability condition.
If there is no internal marked point in $\Gamma_s$, the rubber calculus may be used to create a point as in \cite[Section~1.5]{MP}.
Then we apply the rigidification and replace the rubber invariants by type II invariants (with rigid target $Y$), 
If the extra marked point created above (by rubber calculus) goes to the unstable part, the contribution from that configuration is zero.
Therefore, we may assume the extra point goes to the stable part, which is not empty by the stability requirement.
Once it is rigidified, the product formula applies and we conclude that the contribution splits into the stable contribution and the unstable contribution of the rigid target.
Reverse the rigidification process in the stable contribution, we obtain \eqref{e:3.2.3}.

For the contribution from the stable components, Proposition \ref{prop:svpf} (1) implies that
the stable contribution can be determined by the invariants on $X$ since $\omega$ is non-distinguished (pull-backed from $X$).
It vanishes unless the number of connected components $c(\Gamma)$ is $1$.

The curve classes for unstable components are necessarily fiber curve classes.
After the rigidification, they can be treated by Lemma~\ref{lem:rootst} for the same reason that $\omega$ is a non-distinguished class.
\end{proof}

\section{Deformation invariance of $\T$-invariance property} \label{s:4}

\setcounter{subsection}{+1}

In this section we prove that $\T$-invariance is stable under deformation of vector bundles.
This is an easy consequence of the deformation invariance for GW invariants.

\begin{prop} \label{prop:definv}
Let $E, E'$ be two vector bundles of rank $ r+1$ over $S \times T$, where $S, T$ are smooth projective varieties. If for some $t_0 \in T$, $\T$-invariance holds for $(S, E_{t_0}, E'_{t_0})$, 
then $\T$ invariance holds for $(S, E_{t}, E'_{t})$ any $t \in T$. Here $E_t, E'_t$ are the restrictions of $E, E'$ to $S \times \{ t\}$ for a closed point $t \in T$.


\end{prop}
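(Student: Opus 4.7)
The plan is to upgrade the local model construction to a family over $T$ and invoke deformation invariance of Gromov--Witten invariants. First I would apply the functorial local model construction of Section~\ref{s:1.1} to the triple $(S\times T, E, E')$, producing smooth projective families $\mathcal{X} := X(S\times T, E, E') \to T$ and $\mathcal{X}' := X'(S\times T, E, E') \to T$ whose fibers over $t$ are $X(S, E_t, E'_t)$ and $X'(S, E_t, E'_t)$. The relative flop $f_{S \times T}\colon \mathcal{X} \dashrightarrow \mathcal{X}'$ restricts fiberwise to $f_{S,E_t,E'_t}$, so the graph closure $[\bar\Gamma_{f_{S\times T}}]_*$ defines a correspondence in the family whose restriction to each fiber is $\T_t$.

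Next I would observe that both the matrix of $\T_t$ and the Novikov identification $\T_t(q^{\ell}) = q^{-\ell'}$ are \emph{constant in} $t$. The classes $h$, $\xi$ and the tautological bundles defining $H_j$, $\Theta_k$ all spread out over $T$, yielding relative classes on $\mathcal{X}/T$ whose fiberwise restrictions furnish $\T$-homogeneous bases compatibly in $t$. By Proposition~\ref{prop:homo} and Corollary~\ref{cor:hom}, the structure constants of $\T_t$ on such a basis are independent of $t$; similarly, the extremal curve classes $\ell$, $\ell'$ and the relation $\T(\ell)=-\ell'$ are purely topological, hence locally constant on $T$.

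The main step is then standard deformation invariance. For each choice of $n$, $\beta_S$, $d$, the moduli stacks $\mbar_{0,n}(\mathcal{X}/T, \beta)$ are proper over $T$ and carry a relative perfect obstruction theory whose virtual class commutes with the base change $\{t\} \hookrightarrow T$ by \cite[Theorem~4.1]{vpb}; the stabilization map and evaluation maps likewise base change. Thus every ancestor invariant
$$\langle~ \omega ~\rangle^{X(S, E_t, E'_t)}_{(\beta_S,\,d)}$$
is locally constant in $t$ as a formal power series in the Novikov variables, and the same holds on the $\mathcal{X}'$ side for $\langle~ \T(\omega) ~\rangle^{X'(S, E_t, E'_t)}_{(\beta_S,\,d)}$. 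Because the Novikov substitution used in the analytic continuation is itself $t$-independent by the previous paragraph, the two sides of \eqref{e:1.3.1} at $t$ coincide as analytic functions exactly when they do at $t_0$, within a connected component of $T$; connectedness of $T$ may be assumed without loss of generality.

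The main obstacle I anticipate is purely a bookkeeping one, namely checking base change compatibility for every flavor of invariant whose $\T$-invariance one wishes to transport (absolute ancestors here, and in the obvious extension, relative ancestors for smooth pairs $(S,D)$). For the relative case the divisor $D \subset S$ is fixed, so $D \times T \subset S \times T$ is a smooth divisor in the family and the same compatibility argument via \cite{vpb} applies without modification.
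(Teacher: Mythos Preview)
Your proposal is correct and follows essentially the same approach as the paper: both construct the local model over $S\times T$, observe that the correspondence $\T_t$ is independent of $t$ (the paper phrases this as ``$\T$ commutes with pullback'', you as constancy of the structure constants on a spread-out $\T$-homogeneous basis), and then invoke deformation invariance of Gromov--Witten invariants via compatibility of virtual classes under base change to $\{t\}\hookrightarrow T$. The only cosmetic difference is that the paper works with the absolute moduli $\mbar_{0,n}(X(S\times T),\beta)$ and reads off the fiberwise invariant as the coefficient of $[T]$ in the pushforward, whereas you phrase it via the relative moduli $\mbar_{0,n}(\mathcal{X}/T,\beta)$; since $\beta$ projects to zero in $T$ these are the same object, and both arguments cite the same compatibility result \cite[Theorem~4.1]{vpb}.
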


\begin{proof}

We abuse notation and use $i_t$ to denote both the inclusions  
\[
 S \times \{ t\} \hookrightarrow S \times T \quad \text{and} \quad
 X(S, E_t, E'_t) \hookrightarrow X(S\times T).
\]
Because $T$ is irreducible, it is connected as a complex manifold. Therefore
\[
 {i_t}_* :H_2(S) \to H_2(S \times T)
\] 
is independent of $t$.

Since 
\[
 \xymatrix{ S \times \{ t \}  \ar[r]^{i_{t} }&S\times T  \ar[r]^{~\pr_S}&S} 
\]
is the identity map, the induced map
\[
H_2(S \times \{t\}) \to H_2(S\times T)
\] is injective. 
It follows from the commutative diagram
\[
\xymatrix{
X(S, E_t, E'_t) \ar[r]\ar[d] & X(S\times T)\ar[d]\\
S \times \{t \} \ar[r]  & S \times T\\
}
\]
that $H_2(X(S, E_t, E'_t)) \to H_2(X(S\times T))$ is injective.

Given $\beta \in H_2(X(S \times T))$, consider the cartesian diagram
\[
\xymatrix{
 \mbar_{0,n}(X(S, E_t, E'_t), \beta_t) \ar[r]\ar[d] 
   & \mbar_{0,n}(X(S\times T), \beta)\ar[d]\\
 \{t \} \ar[r] & T .\\
}\]
Here $\beta_t \in H_2(X(S, E_t, E'_t))$, if exists, is the class satisfying ${i_t}_*(\beta_t) =\beta$.

By the compatibility of virtual classes, we have
$$
\Big\langle~~\prod_{i} \pr_S^*(\alpha_i) Q_i
~\Big\rangle^{X(S \times T)}_{\beta}
=\Big\langle~~~\prod_{i}  \alpha_i Q_i ~\Big\rangle^{X(S, E_t, E'_{t})}_{\beta_t}
    \cdot[T] 
$$
as top dimensional cycles in $H_*(T)$. Here $\alpha_i$ are classes on $S$, $Q_i$ are classes on 
$[X(\cc)/G]$.


Now we see the generating function
\[
\Big\langle~\prod_{i}  \alpha_i Q_i ~\Big\rangle^{X(S, E_t, E'_{t})}_{(\beta_S,\,d)}
\]
is determined by 
\[
\Big\langle~\prod_{i} \pr_S^*(\alpha_i) Q_i
~\Big\rangle^{X(S \times T)}_{((i_t)_*(\beta_S), \,d)}
=
\Big\langle~\prod_{i}  \alpha_i Q_i ~\Big\rangle^{X(S, E_t, E'_{t})}_{(\beta_S, \, d)}
\cdot[T]~.
\]
Note that the LHS is independent of $t$.

Similarly, using the fact that $\T$ commutes with pullback, we see that
\[
\Big\langle~\prod_{i} \T(\pr_S^*(\alpha_i)Q_i)
~\Big\rangle^{X'(S \times T)}_{((i_t)_*(\beta_S), \,d)}
=
\Big\langle~\prod_{i}  \T(\alpha_i Q_i) ~\Big\rangle^{X'(S, E_t, E'_{t})}_{(\beta_S,\,d)}
\cdot[T].
\]
As the classes $\left( (i_t)_*(\beta_S),d)\right)$ is independent of $T$, we  see that $\T$-invariance for $(S, E_t, E'_t)$ is independent of $t$.
\end{proof}



\section{$\T$-invariance for $\bp^1$ bundles: absolute, type II and rubbers} 

In this and the next sections, we show that $\T$-invariance for $(D, F, F')$ implies $\T$-invariance for $(P, \pi^*F, \pi^*F')$ for
\[
 \pi: P=\bp_D(N\oplus \oo) \to D
\]
a \emph{split} $\bp^1$ bundle, in the absolute and relative settings. 
In this section, we treat the absolute and type II invariants, including the rubber invariants.

\subsection{Absolute invariants of split projective bundles} 
For absolute invariants, a variant (and easier version) of the quantum Leray--Hirsch theorem proved in Part II \cite{LLW2} 
leads to a stronger result for ordinary flops with base being a split projective bundle. 

\begin{thm} \label{QLH-fiber}
Let 
$$V = \bigoplus_{i = 1}^m L_i \to T$$
such that $L_i$'s are line bundles and 
$$\pi: P = \bp_T(V) \to T$$ 
be its associated split projective bundle. 
Then, $\T$-invariance for generating function of absolute invariants on $(T, F, F')$ 
implies that on $(P, \pi^*F, \pi^*F')$.
\end{thm}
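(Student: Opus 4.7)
The plan is to obtain the theorem as a direct application of the Quantum Leray--Hirsch theorem from Part~II \cite{LLW2}, combined with a fiber-product identification of the local models.

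First I would observe that, since $F$ and $F'$ are pulled back from $T$, the iterated projective bundle construction of the local models gives
\[
X(P,\pi^*F,\pi^*F') \;=\; P\times_T X(T,F,F'), \qquad X'(P,\pi^*F,\pi^*F') \;=\; P\times_T X'(T,F,F').
\]
Indeed $Z(P)=\bp_P(\pi^*F) = P\times_T Z(T)$, and the bundle $\mathscr{O}_{Z(P)}(-1)\otimes p_P^*(\pi^*F')\oplus \mathscr{O}$ defining $X(P)$ is pulled back from the corresponding bundle on $Z(T)$. Hence the natural projection $\varpi\colon X(P)\to X(T)$ is itself a split $\bp^{m-1}$-bundle obtained by base change from $\pi\colon P\to T$, and the flop $X(P)\dashrightarrow X'(P)$ is the base change along $P\to T$ of the flop $X(T)\dashrightarrow X'(T)$. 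In particular, the extremal ray $\ell$ of the flop on $X(P)$ lies in a fiber of $\bp_P(\pi^*F)\to P$; since $F$ is pulled back from $T$, this fiber is identified with one of $\bp_T(F)\to T$, so $\ell$ is precisely the lift of the extremal ray of the flop on $X(T)$. By the motivic property of $\T$ and its compatibility with pullback, $\T_P$ is then compatible with $\T_T$ via $\varpi$.

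The main step is to apply the Quantum Leray--Hirsch theorem to the split bundle $\varpi\colon X(P)\to X(T)$. This expresses the genus-zero generating functions on $X(P)$ (and on $X'(P)$) in terms of those on $X(T)$ (resp.\ $X'(T)$) together with the Picard--Fuchs data of the split fiber coming from $V$. Because $V$ lives on $T$ and is common to both sides of the flop, the Leray--Hirsch coefficients and Picard--Fuchs operators are identical for $X(P)$ and $X'(P)$. The hypothesis that analytic continuation in $q^\ell$ identifies the $X(T)$ and $X'(T)$ generating functions under $\T_T$ therefore propagates to identify those on $X(P)$ and $X'(P)$ under $\T_P$. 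Crucially, the fiberwise Novikov variables of $\varpi$ are never extremal for the flop, so they remain formal and require no analytic continuation; the continuation in $q^\ell$ done at the $T$-level is transported verbatim.

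The main obstacle I expect is to confirm that the Quantum Leray--Hirsch apparatus of \cite{LLW2}, originally developed for the flop direction $X\to Z$, applies in this transverse split direction $\varpi\colon X(P)\to X(T)$, and that the relevant $\T$-homogeneous bases and their duals behave functorially under the base change $P\to T$. The framework of $\T$-homogeneous bases introduced earlier together with the compatibility of $\T$ with proper pullback and pushforward should reduce this to bookkeeping, but it must be checked with care since Leray--Hirsch exploits a Künneth-type decomposition that has to be aligned with the chosen $\T$-homogeneous basis on each side.
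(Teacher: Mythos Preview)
Your proposal is correct and follows essentially the same approach as the paper: both identify $X(P)=\bp_{X(T)}(V)\to X(T)$ as a split $\bp^{m-1}$-bundle with $V$ pulled back from $T$, then invoke the quantum Leray--Hirsch machinery of \cite{LLW2}. The paper phrases the key step as writing $I_{X(P)}$ as a hypergeometric modification of $J_{X(T)}$ and appealing to the $\T$-invariance of the Birkhoff factorization procedure in \cite[\S2--3]{LLW2}, which addresses exactly the ``transverse direction'' concern you flagged.
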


\begin{proof}
This follows from the techniques in \cite{LLW1,LLW2}. Notice that
\[
 X(P) = \mathbb{P}_{X(T)}(V) \to X(T),
\]
is a split $\bp^{m - 1}$ bundle with $V$ pulled-back from the base $T$. The $I$-function of $X(P)$ is a hypergeometric modification of the $J$-function of $X(T)$, as explained in \cite[\S 2]{LLW2}. Symbolically, we can write
\[
 I_{X(P)} = I_{X(P)/X(T)} * J_{X(T)}
\]
where the hypergeometric factor $I_{X(P)/X(T)}$ is determined by the Chern classes of the line bundles $L_i$'s. Since these bundles are \emph{pulled-backs} from $T$,  the 
$\T$-invariance holds by assumption.

Using the techniques in \cite[\S2 and 3]{LLW2}, in particular the $\T$-invariance of the Birkhoff factorization procedure, we conclude that $\T$-invariance for $T$ implies that for $P$.
\end{proof}

Applying the theorem to $T = D$, $V = N \oplus \mathscr{O}$, we obtain the desired result.

\begin{cor} \label{QLH-fiber-2}
$\T$-invariance for generating function of absolute invariants on $(D, F, F')$ 
implies that on $(P, \pi^*F, \pi^*F')$.
\end{cor}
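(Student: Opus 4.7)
The plan is to apply Theorem~\ref{QLH-fiber} directly with $T = D$ and $V = N \oplus \mathscr{O}$. Since $\pi: P = \bp_D(N \oplus \mathscr{O}) \to D$ is stated to be a \emph{split} $\bp^1$ bundle, $N$ is a line bundle on $D$, and hence $V$ is manifestly a direct sum of two line bundles $L_1 = N$ and $L_2 = \mathscr{O}$ over $T = D$. Thus the hypotheses of Theorem~\ref{QLH-fiber} are satisfied with $m = 2$.

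Under this identification, the pulled-back bundles $(\pi^* F, \pi^* F')$ on $P$ are precisely the bundles one obtains from the triple $(T, F, F') = (D, F, F')$ by the projective bundle construction in Theorem~\ref{QLH-fiber}. So the conclusion of the theorem yields exactly that $\T$-invariance for absolute generating functions on $(D, F, F')$ implies $\T$-invariance for absolute generating functions on $(P, \pi^* F, \pi^* F')$, which is the statement of the corollary.

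There is essentially no obstacle here: the corollary is a specialization of Theorem~\ref{QLH-fiber} to the case $m = 2$ with one of the summands trivial. The real content lies in Theorem~\ref{QLH-fiber} itself, whose proof rests on the quantum Leray--Hirsch theorem of \cite{LLW2} together with the $\T$-invariance of the Birkhoff factorization (or equivalently, the hypergeometric modification $I_{X(P)/X(T)}$ depending only on the Chern classes of the pulled-back $L_i$'s). For the corollary, one simply observes that the split $\bp^1$-bundle structure $P = \bp_D(N \oplus \mathscr{O})$ literally falls into the scope of Theorem~\ref{QLH-fiber}, so no additional argument is needed.
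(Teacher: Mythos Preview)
Your proposal is correct and matches the paper's own argument exactly: the paper proves the corollary by applying Theorem~\ref{QLH-fiber} with $T = D$ and $V = N \oplus \mathscr{O}$, precisely as you do. No additional content is needed.
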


We now proceed to establish similar results for relative and rubber invariants.

\subsection{Type II} \label{s:5.2}
\subsubsection{Fiber classes} \label{s:5.2.2}
We \emph{define fiber curve classes (modulo extremal rays)} to be curve classes 
$(\beta_P, d)$ such that $\beta_P$ is a fiber class for $P \to D$.
Equivalently, $(\beta_P, d)$ is a fiber curve class in this sense if
any $\beta \in (\beta_P,d)$ is a fiber class 
for the fiber bundle $X(P) \to D$ in the sense of section~\ref{s:2}.

\begin{thm} \label{thm:fibertypeii}
Fiber class type II invariants of $(P, P_0,P_\infty)$
are $\T$-invariant.
\end{thm}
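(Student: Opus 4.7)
The fiber class type II invariants factor through individual fibers of $X(P)\to D$, and each such fiber admits a canonical product decomposition $\mathbb{P}^1\times X(\cc)$, where $X(\cc)$ is the simple flop local model. Combining the fiber integral formalism of Section~\ref{s:2} with the product formula of Theorem~\ref{t:product}, one reduces the problem to three pieces: absolute GW invariants on $X(\cc)$ (handled by Part~II \cite{LLW2}), relative invariants on $(\mathbb{P}^1,0,\infty)$ (flop-independent), and intersection numbers on $D$.

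\textbf{Step 1: Fiber integral reduction.} By the definition of fiber curve class in Section~\ref{s:5.2.2}, every $\beta\in(\beta_P,d)$ is a fiber class of $X(P)\to D$. Apply Proposition~\ref{p:2.0.4} via the classifying map $D\to BG$ for the bundle data $(N,F,F')$: the virtual class on the moduli of type II stable maps to $(X(P),X(P_0),X(P_\infty))$ with such classes pulls back from the universal virtual class on the corresponding moduli over $BG$. The generating function splits accordingly as an intersection integral over $D$ of characteristic classes (from $N,F,F'$) paired with an equivariant fiberwise generating function.

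\textbf{Step 2: Fiber identification and product formula.} Over $t\in D$, the fiber of $X(P)\to D$ is $X(\mathbb{P}^1,F_t\otimes\oo_{\mathbb{P}^1},F'_t\otimes\oo_{\mathbb{P}^1})$. Since both bundles are trivial on the $\mathbb{P}^1$-fiber, a direct inspection of the local model construction gives
\[
X(\mathbb{P}^1,F_t\otimes\oo_{\mathbb{P}^1},F'_t\otimes\oo_{\mathbb{P}^1})\;\cong\;\mathbb{P}^1\times X(\cc),
\]
with $X(P_0)|_t=\{0\}\times X(\cc)$ and $X(P_\infty)|_t=\{\infty\}\times X(\cc)$. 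Apply Theorem~\ref{t:product} with ambient $X(\cc)$ and pair $(\mathbb{P}^1,\{0\}\sqcup\{\infty\})$ to factor the fiberwise generating function into a convolution of absolute invariants of $X(\cc)$ (carrying the fiber-direction insertions) and relative invariants of $(\mathbb{P}^1,0,\infty)$ governed by the contact profiles $\vec\mu,\vec\nu$.

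\textbf{Step 3: Assembling $\T$-invariance.} Work with a $\T$-homogeneous basis $\{T_iV_m\}$ of $H^*(X(P))$ with $T_i\in H^*(P)$ and $V_m$ polynomials in $h,\xi$. By $H^*(P)$-linearity of $\T$ together with Proposition~\ref{prop:homo}, the rule $\T(h^j\xi^k)=(\xi'-h')^j{\xi'}^k$ coincides with the simple-flop $\T$ on $X(\cc)$; Corollary~\ref{cor:hom} handles the dual basis analogously, so that $\T$ acts on relative insertions supported on $X(P_0),X(P_\infty)$ via the simple-flop $\T$ as well. Therefore $\T$ commutes with the decomposition produced in Steps~1 and~2. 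The $\T$-invariance of the absolute invariants of $X(\cc)$ is precisely the simple flop result of Part~II \cite{LLW2}, and it supplies the required analytic continuation in the extremal Novikov variable $q^\ell$ of $X(\cc)$, which is the extremal ray of $X(P)$; the Hurwitz-type invariants of $(\mathbb{P}^1,0,\infty)$ and the $D$-integrals are flop-independent. Reassembling yields the desired $\T$-invariance in the precise form of Section~\ref{s:1.3}.

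\textbf{Main obstacle.} The principal technical point is the compatibility of the $\T$-action with the fiber-plus-product decomposition of insertions, especially for the relative insertions supported on $X(P_0),X(P_\infty)$. The $\T$-homogeneous basis framework is what makes this compatibility transparent; without it, tracking how $\T$ intertwines with the Leray--Hirsch splitting of cohomology along $X(P)\to P\to D$ would be considerably more delicate.
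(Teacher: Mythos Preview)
Your decomposition via fiber integrals and the product formula is the right framework, and indeed the paper follows the same outline. The gap lies in Step~3. After Step~1 you obtain \emph{equivariant} invariants of the fiber $(\bp^1,0,\infty)\times X(\cc)$ for the structure group $G=G_m\times GL_{r+1}\times GL_{r+1}$ classifying $(N,F,F')$; these take values in $H^*(BG)$, a polynomial ring in the universal Chern classes. The non-equivariant simple flop result on $X(\cc)$ (which is \cite{LLW}, not \cite{LLW2}) only controls the degree-zero part of that polynomial. It does not give you $\T$-invariance of the full $G$-equivariant generating function, and hence does not control the characteristic-class contributions that appear when you integrate over $D$ in the fiber integral formula. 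In effect, knowing $GL_{r+1}\times GL_{r+1}$-equivariant $\T$-invariance for $X(\cc)$ is close to the general non-split statement itself, so you cannot simply invoke it.

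The paper closes this gap with exactly the birational/splitting machinery you have not used. It first applies Proposition~\ref{prop:bir} to replace $D$ by a birational model $D'$ on which the pullbacks of $F,F'$ admit complete flags (Lemma~\ref{l:5.3.1}), then deforms to split bundles (Lemma~\ref{l:deform}) and invokes deformation invariance (Proposition~\ref{prop:definv}). This reduces the structure group from $GL_{r+1}\times GL_{r+1}$ to the torus $T=\prod G_m$. Now the relevant input is $T$-equivariant $\T$-invariance for $X(\cc)$, which is accessible because $A_*(BT)$ is determined by $\{A_*(\prod\bp^n)\}_{n\ge1}$, so one is reduced to the split case over products of projective spaces, handled in \cite{LLW2}. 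Your argument needs this reduction; without it, the appeal to the simple flop does not suffice.
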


\begin{proof}
Let
\[
 G:=GL_{r+1} \times GL_{r+1}, \qquad 
 T :=\prod_1^{2r+2}G_m,
\]
where $G_m = GL_1(\cc)$.

Fiber class invariants for $(P,P_0,P_\infty)$ are fiber integrals.
On the $X(P)$ side, they are fiber integrals for the bundle $X(P) \to D$,
which fits in a cartesian diagram 
\[
\xymatrix{
(X(P),X(P_0),X(P_\infty)) \ar[r]\ar[d] 
   & ([\bp^1 /G_m], [\{0\}/G_m], [\{\infty\}/G_m]) \times [X(\cc)/G] \ar[d]\\
D \ar[r] & BG_m \times BG.
}
\] 

By Proposition \ref{prop:bir}, 
we only need to prove invariance after passing to $D'$ via a birational map $D' \to D$.
Then by Lemmas~\ref{l:5.3.1}, \ref{l:deform}, and Proposition~\ref{prop:definv}, 
we can assume the fiber bundle below has a smaller structure group:
\[
\xymatrix{
(X(P'),X(P'_0),X(P'_\infty)) \ar[r]\ar[d] 
   & ([\bp^1 /G_m], [\{0\}/G_m], [\{\infty\}/G_m]) \times [X(\cc)/T] \ar[d]\\
D' \ar[r] & BG_m \times BT.
}
\] 
Here $P'$ is the pullback of $P$ via $D' \to D$.
Now  fiber class type II invariants of $X(P')$ are determined by (i) the classical intersection product on $D'$, which are the same on both sides of the flop, and (ii) the equivariant invariants of
$$
\big(\bp^1 , \{0\},\{\infty\}\big) \times X(\cc),
$$ 
which by the product formula \cite{LQ} are determined by the equivariant invariants 
of $(\bp^1, \{0\},\{\infty\})$ and $X(\cc)$.
From here we see fiber class $\T$-invariance for $(P,P_0,P_\infty)$ 
follows from that for $T$-equivariant  invariants
of $(\Spec{\cc}, \oo^{\oplus r+1},\oo^{\oplus r+1})$, 
which can be deduced from  the split case for 
\[  \Big\{ \prod_1^{2r+2} \bp^n , ~\boxplus\oo(1), ~\boxplus\oo(1) \Big\}_{n \ge 1 } ,\]
as the Chow group $A_*(BT)$ is 
determined by
$\{ A_*(\prod \bp^n)\}_{n \ge 1}$.
\end{proof}


\subsubsection{Non-fiber classes} \label{s:5.3}

\begin{thm}  \label{thm:typeii}
$\T$-invariance for $D$ implies $\T$-invariance for $(P, P_0,P_\infty)$ 
with curve classes $(\beta_P, d)$ such that $\beta_P$ is not a fiber class 
for $P \to D$.
\end{thm}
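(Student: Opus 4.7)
The proof rests on three ingredients: the $\T$-invariance of absolute invariants on $X(P)$ from Corollary~\ref{QLH-fiber-2}, the strong virtual pushforward property of Proposition~\ref{prop:svpf}(1) for the morphism
\[
 p^{X(D)}\colon \mbar_{0,n}(X(P);\mu,\nu) \longrightarrow \mbar_{0,n}(X(D),\beta_{X(D)}),
\]
and an induction over the partial ordering of Section~\ref{s:1.5.2}. The core idea is to reduce every non-fiber class type II invariant on $(X(P),X(P_0),X(P_\infty))$ either to an absolute invariant on $X(D)$, where $\T$-invariance is granted by hypothesis, or to type II invariants strictly smaller in the partial order, where induction applies.

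First I would fix a $\T$-homogeneous basis of $H^*(X(P))$ extending a chosen $\T$-homogeneous basis of $H^*(X(D))$ via the $H^*(X(D))$-module decomposition
\[
 H^*(X(P)) \;=\; \pi^*H^*(X(D)) \,\oplus\, [X(P_0)]\cdot\pi^*H^*(X(D)).
\]
Expanding each internal insertion $\alpha_j$ as $\pi^*\alpha_j^{(0)} + [X(P_0)]\cdot\pi^*\alpha_j^{(1)}$ splits every type II invariant into a sum indexed by the subset $S\subset\{1,\dots,n\}$ of internal marked points that carry the distinguished factor $[X(P_0)]$. When $|S|=0$ the entire integrand is pulled back from $\mbar_{0,n}(X(D),\beta_{X(D)})$, so the projection formula combined with $(p^{X(D)})_*[\mbar]^{\vir}=0$ forces the invariant to vanish. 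When $|S|=1$ the second identity in Proposition~\ref{prop:svpf}(1) identifies the invariant with $N(\mu,\nu)$ times an absolute invariant on $X(D)$ with $n+l(\mu)+l(\nu)$ insertions (the pullback part $\alpha_1^{(1)}$ together with the remaining pullbacks and the former relative insertions, now internal on $X(D)$); this is $\T$-invariant by assumption.

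The remaining case $|S|\ge 2$ is handled by degeneration. The plan is to apply the degeneration formula to the deformation to the normal cone $W(P,P_0)\to\A^1$, whose special fiber is $P\cup_{P_0}Q$ with $Q=\bp_D(N_{P_0/P}\oplus\oo)$ a split $\bp^1$-bundle over $D$, and with $X(P_\infty)$ extending flatly as the surviving relative divisor on the $X(P)$-component. The degeneration formula decomposes each type II invariant with $|S|\ge 2$ into a convolution of (a) relative invariants on $(X(P),X(P_0),X(P_\infty))$ in which at least one formerly distinguished interior insertion has been absorbed as new relative contact data at $X(P_0)$, and (b) relative invariants on $X(Q)$ whose curve class is of fiber type for $Q\to D$. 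Contributions of type (b) are $\T$-invariant by Theorem~\ref{thm:fibertypeii}, while contributions of type (a) occupy a strictly lower slot in the partial order of Section~\ref{s:1.5.2} because each absorbed distinguished insertion either raises $l(\mu)$ or lowers $\|\alpha\|$; the DCC then closes the induction.

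The main obstacle will be the bookkeeping in this last step: one must verify that every degenerated term strictly decreases the partial order while keeping the $X(P)$-component in the non-fiber class regime over $X(D)$. Non-fiber preservation follows from the fact that $\pi_*$ contracts the bubble $X(Q)$, forcing $\pi_*\beta_{X(P),\text{main}} = \beta_{X(D)}\ne 0$. The partial-order decrease requires matching the cohomological weight carried by the absorbed $[X(P_0)]$-classes against the combinatorial weight of the resulting contact profile, a check enabled by the dimension counts behind Proposition~\ref{prop:svpf} together with the DCC property recorded in Section~\ref{s:1.5.2}.
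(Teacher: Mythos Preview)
Your treatment of the cases $|S|\le 1$ is correct and matches the paper: Proposition~\ref{prop:svpf}(1) gives vanishing for $|S|=0$ and reduction to an absolute $X(D)$-invariant for $|S|=1$. The gap is in the $|S|\ge 2$ step, and it is twofold.

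First, the degeneration formula does not ``absorb'' interior insertions as relative contact data. When you degenerate $(P,P_0,P_\infty)$ to the normal cone of $P_0$ and lift each $[X(P_0)]\cdot\pi^*\alpha$ via the strict transform of $X(P_0)\times\mathbb{A}^1$, that lift restricts to $0$ on the $P$-piece and to the distinguished class $[X(Q_0)]\cdot\pi^*\alpha$ on the bubble $Q=\bp_D(N_{P_0/P}\oplus\mathscr{O})$. Every one of the $|S|$ distinguished insertions therefore migrates intact to the $Q$-piece as a distinguished interior insertion there; none is converted to a relative marked point, and the number of distinguished insertions on the bubble is again $|S|$. Second, the claim that the $Q$-side is always of fiber type for $Q\to D$ is false: nothing forces $\pi_*\beta_2=0$, and since $Q\cong P$ (as $N_{P_0/P}\cong N$), you are faced with the same problem on the same space with the same number of distinguished insertions and possibly the same base curve class. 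The induction does not terminate.

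The paper repairs this with one extra move before degenerating. Using $[X(P_0)]-[X(P_\infty)]=\pi^*c_1(N|_{X(D)})$, one may, modulo invariants with $|S|-1$ distinguished insertions, arrange that exactly one distinguished insertion sits over $P_0$ and the remaining $|S|-1$ sit over $P_\infty$. Now degenerate to the normal cone of $P_\infty$: the single $P_0$-class goes to the $P$-piece and the $|S|-1$ classes at $P_\infty$ go to the bubble, so each side carries at most $|S|-1$ distinguished insertions and the induction closes. Neither Corollary~\ref{QLH-fiber-2} nor the full partial order of Section~\ref{s:1.5.2} is needed here; a plain induction on the number of distinguished insertions suffices.
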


\begin{proof}
We prove it by induction on the number of \emph{distinguished insertions} (defined in Section~\ref{s:3.1}). 
If the number of distinguished insertions is less than 2, we apply Proposition~\ref{prop:svpf} (1) to conclude the proof.

For type II invariants of $(X(P), X(P_0),X(P_\infty))$ with $l \ge 2$ distinguished insertions,  we use the degeneration formula. As 
\begin{equation} \label{move}
  [X(P_0)] - [X(P_\infty)] = \pi^*c_1(N|_{X(D)}),
\end{equation}
(pullback of $N$ under $X(D) \to D$), modulo type II invariants with $l-1$ distinguished insertions, we can assume one of the distinguished insertions is of the form ${i_0}_*(\alpha)$ and the other ones are of the form ${i_\infty}_*(\alpha_i)$.

Consider the family 
\[
 W(X(P),X(P_\infty))=\bl_{X(P_\infty) \times \{0\} } (X(P) \times \A^1)
\] 
with divisors $X(P_0) \times \A^1$ and $\widetilde{X(P_\infty) \times \A^1}$, the strict transformation of $X(P_\infty) \times \A^1$ under the blowing-up $W \to X(P) \times \A^1$.

The degeneration formula for this family implies 
\begin{multline*}
\Big\langle~\ \vec{\mu}\mid \omega \cdot  {i_0}_*(\alpha) \prod_{i=1}^{l-1}{i_\infty}_*(\alpha_i ) \mid \vec{\nu}\ 
~\Big\rangle_{(\beta_P, d)}^{(X(P),\,X(P_0),\,X(P_{\infty}))}
= \\ \sum_{I}\sum_{\eta}
C_\eta\,
\langle~\ \vec{\mu}\mid \omega_1 \cdot {i_0}_*(\alpha)\mid \lambda, e^I ~\rangle_{\Gamma_1}
\cdot
{p_X}_*\Big( \Big\langle~ \lambda, e_I \mid \omega_2 \cdot  \prod_{i=1}^{l-1}{i_\infty}_*(\alpha_i ) \mid \vec{\nu} \ ~\Big\rangle_{\Gamma_2} \Big).
\end{multline*}
Note that the RHS is determined by type II generating functions with at most $l-1$ distinguished insertions. 
This relation is clearly compatible with $\T$.
The theorem now follows by induction.
\end{proof}

\subsection{Rubber invariants}

\begin{thm}  \label{thm:rubber}
$\T$-invariance  for $D$ implies $\T$-invariance for (disconnected)
rubber invariants of $(P, P_0,P_\infty)$ with discrete invariants encoded by $\Gamma$, provided $\Gamma$ contains
 a component with at least 3 marked point.
\end{thm}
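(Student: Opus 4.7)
The plan is to reduce the disconnected rubber invariant on $(X(P), X(P_0), X(P_\infty))$ to an ancestor absolute invariant on $X(D)$, after which the hypothesized $\T$-invariance on $D$ closes the argument.

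First, I apply Proposition~\ref{prop:rubbercalc} to factorize
\[
 \langle~\vec{\mu}\mid\omega\mid\vec{\nu}~\rangle^{\bullet X(P)\sim}_{\Gamma}
 = \langle~\vec{\mu}_s\mid\omega_s\mid\vec{\nu}_s~\rangle^{\bullet X(P)\sim}_{\Gamma_s}\cdot
   \langle~\vec{\mu}_u\mid\omega_u\mid\vec{\nu}_u~\rangle^{\bullet X(P)}_{\Gamma_u}.
\]
Each unstable component carries exactly two marked points, so the hypothesis that some component of $\Gamma$ has at least three marked points, combined with the vanishing in Proposition~\ref{prop:rubbercalc} (which forces $c(\Gamma_s)=1$ for a nonzero contribution), says that $\Gamma_s$ consists of a single connected component carrying $\ge 3$ marked points; in particular, the stabilization map on $\Gamma_s$ is well defined.

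For the unstable factor, Lemma~\ref{lem:rootst} identifies each contribution with the classical Poincar\'e pairing $\tfrac{1}{d}\int_{X(D)}\alpha_1\cup\alpha_2$ on $X(D)$, which is $\T$-invariant because $\T_D$ preserves the Poincar\'e pairing. For the stable factor, I apply the strong virtual pushforward Proposition~\ref{prop:svpf'} to the induced map
\[
 r\colon \mbar_{\Gamma_s}(X(P);\mu_s,\nu_s)^{\bullet\sim}\longrightarrow \mbar_{\Gamma_s}(X(D),\theta)^{\bullet},
\]
which yields $r_*[\,\cdot\,]^{\vir}=R(\mu_s,\nu_s)\,[\,\cdot\,]^{\vir}$. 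Since $\omega_s$ is non-distinguished (pulled back from $X(D)$), the evaluations at relative markings land in $X(P_0), X(P_\infty)\cong X(D)$, and the ancestor $\psi$-classes factor through the common stabilization to $\mbar_{0,n}$ (which itself factors through $r$), the projection formula converts the stable rubber invariant into
\[
 R(\mu_s,\nu_s)\,\langle~\widetilde{\omega}_s~\rangle^{X(D)}_{\theta},
\]
an ancestor absolute invariant on $X(D)$, whose $\T$-invariance is our hypothesis.

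The main obstacle is the bookkeeping across $r_*$: one must verify that the relative markings translate cleanly into internal markings on $X(D)$ under the section identifications $X(P_0), X(P_\infty)\cong X(D)$, that the ancestor $\psi$-classes pull back consistently through $r$ and through the further stabilization to $\mbar_{0,n}$, and that the combinatorial factor $R(\mu_s,\nu_s)$ depends only on the ramification profiles $\mu_s,\nu_s$ (which are preserved under $\T$ since all curve classes in a fixed $(\beta_P,d)$ have the same intersection numbers with $X(P_0)$ and $X(P_\infty)$). All three are handled by the projection formula applied along $r$ once one observes that every evaluation and ancestor map factors through $r$.
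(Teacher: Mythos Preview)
Your proposal is correct and follows essentially the same approach as the paper: both use Proposition~\ref{prop:rubbercalc} to split off the unstable factor (handled via Lemma~\ref{lem:rootst} and $\T$-invariance of the Poincar\'e pairing on $X(D)$), and then Proposition~\ref{prop:svpf'} to push the single remaining stable component down to an absolute ancestor invariant on $X(D)$, where the hypothesis applies. Your write-up simply unpacks more of the bookkeeping (factoring evaluations and $\psi$-classes through $r$, and noting that $R(\mu_s,\nu_s)$ depends only on the profiles) than the paper's terse proof, but the logical skeleton is identical.
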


\begin{proof}
Recall that the insertions for rubber invariants are necessarily non-distinguished.
By Proposition \ref{prop:svpf'}, we see connected rubber invariants with at least 3 marked points are $\T$-invariant. 
Note that a component with at least 3 marked points is stable. 
For disconnected rubber invariants, by Proposition \ref{prop:rubbercalc}, $\T$-invariance follows from that for connected invariants.
\end{proof}

\section{$\T$-invariance for $\bp^1$ bundles: type I invariants}
In this section, we prove $\T$-invariance for type I relative invariants of the $\bp^1$ bundle $P$, assuming invariance of the base $D$.

As the strong virtual pushforward property for type I invariants is conditional, the arguments in this section is not as straightforward as those for type II and rubber in the last section. The starting point is that, up to type II invariants, distinguished insertions in an invariant can be removed using the degeneration formula (Lemma \ref{lem:rmdist}).

Consider a type I invariant of $(P,P_\infty)$ without distinguished insertions of classes $(\beta_P, d)$. If $\int_{\beta_P} P_0 \ge 0$, then the invariant is zero by the strong virtual push forward property (Proposition~\ref{prop:svpf} (2)). 
If $\int_{\beta_P} P_0 < 0$, using an inversion of degeneration argument, we show in Proposition~\ref{prop:inversion'} that the invariant is determined by an absolute invariant, lower order type I invariants, type II and rubber invariants. As absolute, type II and rubber invariants are $\T$-invariant, $\T$-invariance for type I invariants follows.

The proof of Proposition~\ref{prop:inversion'} requires some preparation. 
The argument involves applying the degeneration formula to an \emph{ancestor invariant}. 
We then need to understand certain fiber integral and relative invariants with ancestor insertions. 
A number of lemmas in this section are about non-vanishing fiber integrals, which ensures certain inversion of degeneration arguments.
Ancestor insertions in a relative invariant can be removed using TRR for ancestors (Appendix~A).
We need to keep track of the orders of type I invariants obtained from  removing ancestors. 
To do so, we introduce in Subsection~\ref{ss:dom} a relation between type I invariants called dominance 
and the order of those relative invariants are characterized in Proposition\ref{prop:ancestor}.

\subsection{Nonvanishing conditions} \label{s:6.1}
In this subsection, we discuss some nonvanishing conditions on $X(\cc)$, or $X(\Spec \cc, \mathscr{O}^{\oplus r+1}, \mathscr{O}^{\oplus r+1})$.
These conditions give ``selection rules'' of the cohomology classes in the highest order terms arising from the degeneration arguments. 
See for example the end of the proof of Proposition~{prop:inversion'} and Section~{ss:dom}.


We use $Q$ and $R$ to represent elements in a chosen (homogeneous) basis of $X(\cc)$;
$\deg Q$ and $\deg R$ their degrees with respect to the real grading of $H^*(X(C))$.
A superscript $\vee$ will be used to represent elements in  the dual basis. In particular, when $Q  \ne R$,
\[
\int_{X(\cc)} Q^{\vee} \cup R = 0.
\] 

We use notations form Subsection~\ref{GWI}, especially (\ref{cycle}) and (\ref{rel-cycle}) on certain cycles in absolute and relative moduli stacks. 


\begin{lemma} \label{lem:cycle}
\[
\st_* \Big (\llangle~ Q^{\vee} , R , \underbrace{1,\cdots, 1}_{m} ~\rrangle^{X(\cc)}_{n\ell} \Big)
\]
is a nonzero top dimensional cycle on $\mbar_{0,m+2}$ if and only if $n=0$ and $Q=R$.
\end{lemma}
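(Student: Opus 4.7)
The plan is a dimension count to reduce to $\deg Q = \deg R$, then to handle $n = 0$ directly and $n > 0$ by the ancestor string equation.

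Since $X(\cc)$ has complex dimension $2r + 1$ and the extremal curve $\ell$ satisfies $\int_\ell c_1(T_{X(\cc)}) = 0$, the virtual complex dimension of $\mbar_{0, m+2}(X(\cc), n\ell)$ equals $2r + m$, independent of $n$. Capping with $\ev_1^*(Q^\vee) \cup \ev_2^*(R)$ leaves a cycle of complex dimension $m - 1 + \tfrac12(\deg Q - \deg R)$, using the paper's convention of real degrees. For the $\st$-pushforward to be a nonzero top-dimensional class on $\mbar_{0, m+2}$ (of complex dimension $m - 1$), the dimensions must match, forcing $\deg Q = \deg R$; in any other case the pushforward lies in a Chow group either of too high a dimension (hence zero) or of too low a dimension (hence not top).

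Assume henceforth $\deg Q = \deg R$. For $n = 0$, $\mbar_{0,m+2}(X(\cc), 0) = X(\cc) \times \mbar_{0,m+2}$ is smooth with its fundamental class as virtual class (since obstructions for constant genus-zero maps vanish), and every $\ev_j$ factors through projection to $X(\cc)$. A direct computation then yields
\[
\st_*\Big([\mbar_{0,m+2}(X(\cc), 0)]^{\vir} \cap \big(\ev_1^*(Q^\vee) \cup \ev_2^*(R)\big)\Big)
= \Big(\int_{X(\cc)} Q^\vee \cup R\Big) \cdot [\mbar_{0,m+2}],
\]
which equals $[\mbar_{0,m+2}]$ when $Q = R$ and vanishes otherwise by Poincar\'e duality in the chosen basis.

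For $n > 0$, I extract the coefficient $c$ in the relation $\st_*[\,\cdot\,] = c \cdot [\mbar_{0,m+2}]$ by pairing with the top form $\psi_1^{m-1}$, whose integral over $\mbar_{0,m+2}$ equals $1$ by the standard genus-zero $\psi$-class formula, obtaining
\[
c = \Big\langle\,\taub_{m-1}(Q^\vee),\ R,\ 1,\ \ldots,\ 1\,\Big\rangle^{X(\cc)}_{n\ell}.
\]
The genus-zero ancestor string equation, applied $m - 1$ times, successively absorbs the primary $1$-insertions while lowering the descendant level at position $1$ by one (at each stage $Q^\vee$ carries the unique positive descendant, so the contributions involving $\taub_{-1}$ on primary insertions vanish), reducing $c$ to the three-point invariant $\langle Q^\vee, R, 1\rangle^{X(\cc)}_{0, 3, n\ell}$. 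A final application of the primary string equation, valid because $\mbar_{0, 2}(X(\cc), n\ell)$ is stable for $n > 0$, shows that this three-point invariant vanishes, yielding $c = 0$. The main technical ingredient is the ancestor string equation in genus zero, which is standard, so no substantive obstacle is anticipated; the only care needed is to split the final reduction from three to two points into a primary-string step, since $\mbar_{0,2}$ is unstable as a moduli of curves while $\mbar_{0,2}(X(\cc), n\ell)$ remains a valid moduli of maps.
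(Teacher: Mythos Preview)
Your proof is correct, but takes a genuinely different route from the paper for the case $n>0$.

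For $n>0$, the paper gives a short geometric argument specific to $X(\cc)$ and the extremal class $\ell$: every stable map of class $n\ell$ factors through $Z(\cc)=\bp^r\subset X(\cc)$, and since $\xi|_{Z(\cc)}=0$, the insertions $Q^\vee(h,\xi)$, $R(h,\xi)$ can be replaced by $Q^\vee(h,0)$, $R(h,0)$; these are polynomials in $h$ of degree at most $r$, so their degrees cannot sum to the required $2r+1$, forcing the cycle to vanish. Your argument instead extracts the coefficient $c$ by pairing with $\psi_1^{m-1}$ and reduces via a string-type identity for ancestors to the three-point primary $\langle Q^\vee,R,1\rangle_{n\ell}=0$. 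This is a purely axiomatic argument that makes no use of the special geometry of $X(\cc)$ or of $\ell$; it would show the analogous vanishing for any target and any curve class with $c_1\cdot\beta=0$. The paper's approach is quicker here and more transparent about \emph{why} extremal classes contribute nothing, while yours isolates the vanishing as a formal consequence of the string equation.

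One caution: the ``ancestor string equation'' you invoke is not quite as standard as the descendant version, because $\bar\psi_i$ and $\pi^*\bar\psi_i$ differ by $\st^*D_{i,n+1}$ rather than by a boundary divisor on the moduli of maps. The identity you need, however, is easily checked in your situation: since at every step the ancestor power equals $\dim\mbar_{0,N}$, one has $(\pi')^*\psi_1^{N-3}=0$ on $\mbar_{0,N}$, whence $\psi_1^{N-3}=D_{1,N}\cdot(\pi')^*\psi_1^{N-4}$; pulling back by $\st$ and using $\pi_*(\st^*D_{1,N})=1$ (the section through the first marked point) gives exactly the reduction you claim. It would strengthen the write-up to make this one-line verification explicit rather than to call the step ``standard''.
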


\begin{proof}
By calculating the dimension of the cycle $\llangle~ Q^{\vee} , R , 1,\cdots, 1 ~\rrangle^{X(\cc)}_{n\ell}$ , we see that  
\[
\deg Q^{\vee}  +\deg R =2r+1.
\]
We can write $Q^{\vee}$ and $R$ uniquely as polynomials in $h$ and $\xi$ of the form
\[
\sum_{i \le r, j \le r+1} a_{ij}h^i\xi^j.
\]
Here $a_{ij}$ are complex numbers. We will write $Q^{\vee}(h,\xi)$ or $R(h,\xi)$ when we view them as polynomials in $h$ and $\xi$.

When $n \ne 0$,  a stable map to $X(\cc)$ factors through $Z(\cc)$. As $\xi|_{Z(\cc)} =0$, 
\[
\llangle~ Q^{\vee}(h,\xi),  R(h,\xi), 1, \cdots, 1~\rrangle^{X(\cc)}_{n\ell} = 
\llangle~ Q^{\vee}(h, 0), R(h,0), 1\cdots 1)~\rrangle^{X(\cc)}_{n\ell},
\]
but  then 
$$
\deg Q^{\vee}(h,0) + \deg R(h,0) \le r+r < 2r+1.
$$

When $n=0$, 
\[
\st_* \Big (\llangle~ Q^{\vee} , R , \underbrace{1,\cdots, 1}_{m} ~\rrangle^{X(\cc)}_{n\ell} \Big) =
\Big(\int_{X(\cc)} Q^{\vee} \cup R\Big) [\mbar_{0,m+2}].\] 
It is nonzero if and only if $Q=R$.
\end{proof}

\begin{lemma}  \label{lem:unstable}
For any $c>0, n \ge 0$ integers, consider the two-pointed relative invariant on $(\bp^1, \, \{ 0\})\times X(\cc)$
\[
\langle~(c, Q^{\vee}) \mid tR~\rangle^{(\bp^1, \, \{ 0\})\times X(\cc)}_{c[\bp^1]+n\ell}
\] 
with curve class $d[\bp^1] + n\ell$, where $[\bp^1]$ the fundamental class of $\bp^1$, and 
$$
t=c_1(\mathscr{O}_{\bp^1}(1)).
$$
Then this invariant is nonzero if and only if $c=1$, $n=0$ and $Q=R$.
\end{lemma}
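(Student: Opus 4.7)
The plan is to treat the two regimes $n=0$ and $n\ge 1$ separately, using different mechanisms in each.

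For $n=0$, the curve class $c[\bp^1]$ has trivial $X(\cc)$-projection, so every stable map factors through $\bp^1\times\{p\}$ for some $p\in X(\cc)$. I would apply the fiber integral formalism of Section~2 to the trivial bundle $\bp^1\times X(\cc)\to X(\cc)$ (with trivial structure group, base $B=X(\cc)$, and fiber $(F,D_F)=(\bp^1,\{0\})$), obtaining the factorization
\[
 \langle~(c,Q^{\vee})\mid tR~\rangle^{(\bp^1,\{0\})\times X(\cc)}_{c[\bp^1]}
 =\Big(\int_{X(\cc)}Q^{\vee}\cup R\Big)\cdot\langle~(c)\mid t~\rangle^{(\bp^1,\{0\})}_{c[\bp^1]}.
\]
The first factor equals $\delta_{Q,R}$ by Poincar\'e duality. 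For the second, the relative moduli $\mbar_{0,2}((\bp^1,\{0\}),c[\bp^1];(c))$ has virtual dimension $c$ while $\deg t=1$; this forces $c=1$, and in that case the totally-ramified degree-one map is rigid up to reparameterization so the moduli is isomorphic to $\bp^1$ (parameterized by the position of the non-relative marked point) and $\int_{\bp^1}t=1$.

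For $n\ge 1$, the plan is to show vanishing via a geometric containment and a dimension count. The key structural claim is that every effective curve of class $m\ell$ with $m\ge 1$ in $X(\cc)=\bp_{\bp^r}(\mathscr{O}(-1)^{r+1}\oplus\mathscr{O})$ is supported in the zero section $Z(\cc)$. I would prove this by examining sections of the projective bundle: an irreducible non-constant component of class $a\gamma+b\ell$ with $a=0$, $b\ge 1$ is a section over a degree-$b$ rational curve $C\subset\bp^r$, determined by a line subbundle $M\subset(\mathscr{O}(-b)^{r+1}\oplus\mathscr{O})|_C$; the condition $\int\xi=0$ forces $M=\mathscr{O}_C$ embedded via the $\mathscr{O}$-summand (the $\mathscr{O}(-b)^{r+1}$-summand has no nonzero sections on $C\cong\bp^1$ for $b\ge 1$), which is precisely the zero section. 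Contracted components sit in $Z(\cc)$ because they attach at nodes lying on non-contracted components that are already in $Z(\cc)$.

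Consequently, both evaluation maps $\ev_1,\ev_2$ factor through $\bp^1\times Z(\cc)$, and since $\xi|_{Z(\cc)}=0$, only the $\xi$-free parts of $R$ and $Q^{\vee}$ contribute. In the basis $\{h^j\xi^k\}$, these surviving monomials have complex degree at most $r$ each, so the total insertion degree is at most $1+2r$, strictly less than the virtual dimension $c+2r+1$ (using $\int_\ell c_1(TX(\cc))=0$). The invariant therefore vanishes. The main obstacle I anticipate is establishing the structural containment claim for curves of class $m\ell$; once this is in hand, the dimension argument is parallel in spirit to Lemma~\ref{lem:cycle}.
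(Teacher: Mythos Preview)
Your proof is correct, but the route differs from the paper's. The paper handles all $n$ at once: it first applies the divisor equation to introduce a third marked point,
\[
\langle (c,Q^\vee)\mid tR\rangle_{c[\bp^1]+n\ell}
=\tfrac{1}{c}\,\langle (c,Q^\vee)\mid t,\,tR\rangle_{c[\bp^1]+n\ell},
\]
and then applies the product formula (Theorem~\ref{t:product}) to split the three-pointed invariant into the $(\bp^1,\{0\})$ factor $\langle (c,1)\mid t,t\rangle_{c[\bp^1]}$ and the absolute $X(\cc)$ factor $\langle Q^\vee,1,R\rangle^{X(\cc)}_{n\ell}$. A one-line virtual dimension count on the first factor forces $c=1$, while the second factor is exactly the cycle analyzed in Lemma~\ref{lem:cycle}, forcing $n=0$ and $Q=R$.

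Your argument instead splits into two regimes. For $n=0$ you invoke the fiber-integral formalism of Section~\ref{s:2} with trivial structure group in place of the product formula; this is lighter machinery and gives the same factorization. For $n\ge 1$ you re-derive the geometric containment ``class $n\ell$ maps factor through $Z(\cc)$'' directly rather than citing Lemma~\ref{lem:cycle}, and close with a dimension count. Two small points of care are worth noting: (i) your section/line-subbundle argument tacitly assumes the component maps birationally onto a smooth rational image, whereas one should argue via the normalization $\tilde C\to X(\cc)$ (the vanishing $H^0(\tilde C,L)=0$ for $\deg L<0$ then needs no rationality hypothesis); (ii) in the relative setting the target may be expanded, but since each bubble is again $\bp^1\times X(\cc)$ and the $X(\cc)$-projections glue compatibly, the composite $C\to X(\cc)$ still has class $n\ell$ and your containment applies. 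With these remarks your argument is complete; the paper's version is simply more economical because it recycles Lemma~\ref{lem:cycle} and avoids the case split.
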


\begin{proof}
Using the divisor equation, we have
\[
\langle~(c, Q^{\vee} ) \mid tR~\rangle^{(\bp^1, \, \{ 0\})\times X(\cc)}_{c[\bp^1]+n\ell} =
\frac{1}{c}\langle~(c, Q^{\vee} ) \mid t, tR~\rangle^{(\bp^1, \, \{ 0\})\times X(\cc)}_{c[\bp^1]+n\ell}.
\]
By the product formula \cite{LQ},
\[
\langle~(c, Q^{\vee} ) \mid t, tR~\rangle^{(\bp^1, \ \{ 0\})\times X(\cc)}_{c[\bp^1]+n\ell}
=\langle~(c, 1 ) \mid t, t~\rangle^{(\bp^1, ~\{0\})} \langle~ Q^{\vee}, 1,  R~\rangle^{X(\cc)}_{n\ell}.
\]
$\langle~(c, 1 ) \mid t, t~\rangle^{(\bp^1, ~\{0\})}$ is zero unless $c=1$, which follows from calculating the virtual dimension. For $d=1$, $\langle~(1, 1 ) \mid t, t~\rangle^{(\bp^1, ~\{0\})}=1$.

If $\langle~ Q^{\vee}, 1,  R~\rangle^{X(\cc)}_{n\ell}$ is nonzero, then by Lemma~\ref{lem:cycle}$, n=0$ and $Q=R$.
\end{proof}

\subsection{Dominance relation for weighted partitions} \label{ss:dom}
In this subsection, the base $D$ is general. Let $\{ \delta_i \}$ be a basis of $H^*(D)$, we fix a $\T$-homogeneous basis compatible with $\{ \delta_i \}$. We will use $Q$ and $R$ with possible subscripts to represent cohomology classes in $H^*([X(\cc)/G])$ that appear in this
 $\T$-homogeneous basis.
Notations for weighted partitions are introduced in Section~\ref{s:1.5}.

\subsubsection{Dominance relation}
We start with the definition of splitting. They will be used in the definition of dominance relation (Definition~\ref{d:dominance}).

\begin{defn} \label{d:6.2.1}
We say that we can \emph{split} $(\delta_{a_1}, Q)$ into $\{ (\delta_{b_m}, R_m)\}$,
if
\[ \int_D \delta_{a_1}^{\vee} \cdot  \prod_m \delta_{b_m} \ne 0,
\]
and for some $n \ge 0$,
\[
\st_* \Big( \llangle~ Q^{\vee} \cdot \prod_{m} R_m ~\rrangle^{X(\cc)} _{n\ell} \Big)
\]
is a nonzero top dimensional cycle on $\mbar_{0, 1+ | \{ m\} |}$ .
\

In particular, when $(\delta_{a_1}, Q)$ splits into $\{ (\delta_{b_m}, R_m)\}$,
we have
\[
\deg \delta_{a_1} = \sum \deg \delta_{b_m}, \qquad
\deg Q =\sum  \deg R_m.
\]
\end{defn}

\begin{remark}
In the definition of splitting, 
classes $Q$ and $R_m$ are classes  on $[X(\cc)/G]$.
We abuse notation to use the same symbols for 
corresponding classes on $X(\cc)$. 

\end{remark}

\begin{defn}[Dominance relation] \label{d:dominance}
Let
\[ 
 \vec{\mu}=\{(\mu_i,\delta_{a_i} Q_i ) \}_{i \in I} \quad \text{and} \quad 
 \vec{\nu}=\{(\nu_j,\delta_{b_j}R_j)\}_{j \in J}
\]
be two weighted partitions 
such that 
\[ 
 \deg_D{\vec{\mu}} = \deg_{D}\vec{\nu}.
\] 
We say \emph{$\vec{\mu}$ dominates $\vec{\nu}$} 
if there exists a partition of $J$
\[ J = \sqcup_{i \in I} J_i,\]
such that for every $i$, one of the following conditions is satisfied.
\begin{itemize}
\item $J_i \neq \emptyset$ and ${\displaystyle \mu_i -1 >  \sum _{j \in J_i} (\nu_j-1)}$,
\item  $J_i \neq \emptyset$, ${\displaystyle \mu_i -1 =  \sum _{j \in J_i} (\nu_j-1)}$, and $\{ (\delta_{b_j} , R_j )\}_{j \in J_i}$ can be obtained from
  $(\delta_{a_i}, Q_i)$ by splittings.
\item $J_i = \emptyset$ and $\mu_i-1 > 0$,
\item $J_i =\emptyset$, $\mu_i=1$ and $\delta_{a_i}=1$, $Q_i=1$.
\end{itemize}
\end{defn}

The following lemma is an easy consequence of the definition of dominance.

\begin{lemma}
\begin{enumerate}
\item 
If $\vec{\lambda}$ dominates $\vec{\mu}$ and $\vec{\mu}$ dominates $\vec{\nu}$,  then 
$\vec{\lambda}$ dominates $\vec{\nu}$.
\item 
If $\vec{\mu_i}$ dominates $\vec{\nu_i}$, 
then $\sqcup_i ~\vec{\mu_i}$ dominates $\sqcup_i ~\vec{\nu_i}$.
\end{enumerate}
\end{lemma}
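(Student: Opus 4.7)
The plan is to verify both claims by direct manipulation of the witness partitions appearing in the definition of dominance, together with an observation that the splitting relation of Definition~\ref{d:6.2.1} is closed under iteration.

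For part~(1), suppose $\vec{\lambda}=\{(\lambda_k,\delta_{c_k}P_k)\}_{k\in K}$ dominates $\vec{\mu}=\{(\mu_i,\delta_{a_i}Q_i)\}_{i\in I}$ via the partition $I=\sqcup_{k\in K}I_k$, and $\vec{\mu}$ dominates $\vec{\nu}=\{(\nu_j,\delta_{b_j}R_j)\}_{j\in J}$ via $J=\sqcup_{i\in I}J_i$. I would define the composite witness $J=\sqcup_{k\in K}J_k$ by $J_k:=\sqcup_{i\in I_k}J_i$ and check the four alternatives of Definition~\ref{d:dominance} for each $k$. The basic chain of inequalities
\[
\lambda_k-1\;\ge\;\sum_{i\in I_k}(\mu_i-1)\;\ge\;\sum_{i\in I_k}\sum_{j\in J_i}(\nu_j-1)\;=\;\sum_{j\in J_k}(\nu_j-1),
\]
with strictness recorded whenever some $i\in I_k$ falls in cases (a)/(c) or $k$ itself falls in case (A)/(C), furnishes condition (a) or (c) of the definition whenever at least one strict inequality appears. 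The remaining edge cases $I_k=\emptyset$ or $J_k=\emptyset$ are treated separately: if $I_k=\emptyset$ then also $J_k=\emptyset$ and case (D) at level $k$ propagates to case (d) at level $k$; if $J_k=\emptyset$ but $I_k\ne\emptyset$, then every $i\in I_k$ is of type (c) or (d), forcing a strict inequality unless $I_k$ consists entirely of (d), in which case the splitting argument of the next paragraph returns $(\delta_{c_k},P_k)=(1,1)$.

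The one non-trivial point is the equality case, in which $k$ is of type (B) and every $i\in I_k$ is of type (b) or (d): here one must produce a splitting of $(\delta_{c_k},P_k)$ into $\{(\delta_{b_j},R_j)\}_{j\in J_k}$. I would do this by iterating the given splittings and then pruning the $(1,1)$ factors coming from the (d)-type indices. Iteration is visibly allowed by the plural ``splittings'' in Definition~\ref{d:dominance}: perform the first splitting $(\delta_{c_k},P_k)\leadsto\{(\delta_{a_i},Q_i)\}_{i\in I_k}$ provided by (B), then for each type-(b) index $i$ further split $(\delta_{a_i},Q_i)\leadsto\{(\delta_{b_j},R_j)\}_{j\in J_i}$. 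The type-(d) pieces contribute factors $\delta_{a_i}=1$ in the $D$-integral (so they do not affect $\int_D\delta_{c_k}^{\vee}\cdot\prod\delta_{b_j}\ne 0$) and insertions $Q_i=1$ in the cycle $\st_*\llangle P_k^{\vee}\cdot\prod R_j\cdot\prod 1\rrangle^{X(\cc)}_{n\ell}$; these trivial insertions are removed by the forgetful map $\mbar_{0,1+|J_k|+\#(\text{d-pieces})}\to\mbar_{0,1+|J_k|}$ using the string equation, which preserves non-vanishing of the top-dimensional push-forward. This is the step I expect to be the main (if still modest) obstacle, since it is the only place where one must look past the bookkeeping and use a mild cohomological input.

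Part~(2) is purely formal: if each $\vec{\mu_i}$ dominates $\vec{\nu_i}$ via a partition of the index set of $\vec{\nu_i}$, then stacking these partitions gives a partition of the index set of $\sqcup_i\vec{\nu_i}$ over the index set of $\sqcup_i\vec{\mu_i}$, and each cell of this composite partition satisfies one of the four alternatives of Definition~\ref{d:dominance} by hypothesis. No cross-interaction between different $i$'s occurs, so the check is immediate.
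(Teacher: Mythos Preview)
Your proof is correct and follows the natural direct verification that the authors presumably have in mind; the paper itself gives no proof at all, only the sentence ``The following lemma is an easy consequence of the definition of dominance.'' You go well beyond that by actually writing down the composite witness partition $J_k=\sqcup_{i\in I_k}J_i$, tracking the four cases, and isolating the one genuinely non-formal step: pruning the $(1,1)$ leaves that arise from type-(d) indices when composing the splitting trees. Your resolution of that step is right. The $D$-integral is untouched by removing factors of $1$, and for the cycle condition the relevant input is precisely the fundamental-class compatibility $\pi^* R_{\Gamma_{N-1}}(\alpha_1,\dots,\alpha_{N-1})=R_{\Gamma_N}(\alpha_1,\dots,\alpha_{N-1},1)$ for the forgetful map $\pi\colon\mbar_{0,N}\to\mbar_{0,N-1}$; since $\pi^*$ is an isomorphism on $H^0$, non-vanishing of the top-dimensional pushforward is preserved. (Calling this ``the string equation'' is a slight abuse---it is the fundamental-class axiom at the level of Gromov--Witten classes---but the content is the same.) The boundary cases you flag, where pruning would leave a singleton or the empty set, are handled exactly as you say: Lemma~\ref{lem:cycle} forces $(\delta,Q)$ to equal the sole surviving piece in the first case and to equal $(1,1)$ in the second, so the contracted tree is still a tree of valid splittings. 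Part~(2) is, as you note, pure bookkeeping.
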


\begin{lemma} \label{lem:order}
If $\vec{\mu}$ dominates $\vec{\nu}$ and $\sum \mu_i = \sum \nu_j$, then either 
$\vec{\mu}$ is of lower order than $\vec{\nu}$, 
or it can be identified with $\vec{\nu}$. 
\end{lemma}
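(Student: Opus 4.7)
The strategy has two stages. Using the constraint $\sum\mu_i=\sum\nu_j$, the dominance inequalities will first bound $l(\mu)\le l(\nu)$; in the case of equality one then compares the sorted triple sequences that govern the ordering $\succ$ in its third slot.

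Summing the dominance inequalities $\mu_i-1\ge\sum_{j\in J_i}(\nu_j-1)$ (strict in cases~1 and~3, equality in cases~2 and~4) over $i\in I$ and applying $\sum\mu_i=\sum\nu_j$ yields
\[
l(\mu)=|I|\le|J|=l(\nu),
\]
with equality iff every $i\in I$ lies in case~2 or~4. A count of indices then gives $|\{i:\text{case }4\}|=\sum_{i:\text{case }2}(|J_i|-1)$. If the inequality is strict, the ordering rule (larger $l$ is higher in $\succ$) already yields $\vec{\nu}\succ\vec{\mu}$, which is the conclusion.

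In the equality case, pair each case-$4$ index with one ``extra slot'' of a case-$2$ splitting with $|J_i|\ge 2$; the transformation $\vec{\mu}\leadsto\vec{\nu}$ then decomposes into local moves. For each $i$ in case~2, one replaces the multiset of triples $\{(\mu_i,\deg\delta_{a_i},\deg Q_i)\}$ together with $|J_i|-1$ copies of $(1,0,0)$ by $\{(\nu_j,\deg\delta_{b_j},\deg R_j)\}_{j\in J_i}$. Since each $\nu_{j'}\ge 1$, one has $\mu_i\ge\nu_j$ for every $j\in J_i$, and Definition~\ref{d:6.2.1} supplies the balances $\deg\delta_{a_i}=\sum_{J_i}\deg\delta_{b_j}$ and $\deg Q_i=\sum_{J_i}\deg R_j$. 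A direct lexicographic comparison then shows that the sorted-decreasing sequence of the ``before'' multiset is $\ge_l$ that of the ``after'' multiset, with strict $>_l$ unless the splitting is trivial: exactly one piece equals $(\mu_i,\deg\delta_{a_i},\deg Q_i)$ and the remaining $|J_i|-1$ pieces equal $(1,0,0)$.

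Connectedness of $D$ (so the unique degree-zero basis element of $H^*(D)$ is~$1$) and the analogous fact for $X(\cc)$, combined with the nonvanishing conditions $\int_D\delta_{a_i}^{\vee}\prod\delta_{b_j}\ne 0$ and the cycle-nonvanishing in Lemma~\ref{lem:cycle}, force, in the equality case, the nontrivial piece to be literally $(\delta_{a_i},Q_i)$ and the trivial pieces to be $(1,1)$. Composing the local moves and using the elementary fact that $\ge_l$-comparison of sorted multisets is preserved under common unions, the sorted triple sequence of $\vec{\mu}$ is $\ge_l$ that of $\vec{\nu}$; a strict $>_l$ translates via the convention ``lower in $>_l$ = higher in $\succ$'' into $\vec{\nu}\succ\vec{\mu}$, while equality recovers $\vec{\mu}=\vec{\nu}$ as weighted partitions. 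The delicate points will be the exhaustive case analysis of a single local move, and the invocation of Lemma~\ref{lem:cycle} to upgrade coincidence of triples to coincidence of cohomology classes.
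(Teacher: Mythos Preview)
Your proposal is correct and follows essentially the same route as the paper's proof: both begin by summing the dominance inequalities to obtain $l(\mu)\le l(\nu)$, and in the equality case both exploit the degree balances from Definition~\ref{d:6.2.1} together with Lemma~\ref{lem:cycle} to force identification of the underlying classes. The organization differs slightly: the paper argues by contradiction (assume $\vec\mu$ is not of lower order, then iteratively peel off the highest term of $\vec\nu$ and match it in $\vec\mu$), whereas you decompose $\vec\mu\leadsto\vec\nu$ into local moves indexed by the case-2 splittings (each move replacing one triple and some borrowed $(1,0,0)$'s by the split pieces), verify each move is $\ge_l$-nonincreasing, and then compose using the stability of $\ge_l$ under common unions. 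The latter fact is indeed elementary, since multiset differences $A'\setminus B'$ and $B'\setminus A'$ are unchanged by adding a common element, and $\ge_l$ on sorted sequences is governed by $\max(A\setminus B)$ versus $\max(B\setminus A)$. Your explicit mention of the connectedness of $D$ (to pin down the degree-zero basis element as $1$) is a hypothesis the paper leaves implicit but also uses.
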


\begin{proof}
Assume $\vec{\mu}$ is not of lower order than $\vec{\nu}$.
Let 
\[
 \vec{\mu} = \{(\mu_i, \delta_{a_i}Q_i) \}, \qquad
 \vec{\nu}= \{ (\nu_j, \delta_{b_j}R_j) \}.
\]
By the definition of dominance,
\[ \mu_i -1  \ge \sum_{j \in J_j} (\nu_j - 1),\]
where the right hand side is understood to be zero if $J_i = \emptyset$.
Summing up all $i$, we have 
\[
\sum \mu_i - l(\mu) \ge \sum \nu_j - l(\nu),
\]
or
\[ 
l(\mu) \le l(\nu).
\]
Since $\vec{\mu}$ is not of lower order than $\vec{\nu}$, we should $l(\mu) = l(\nu)$,
or equivalently
\[\mu_i -1  = \sum_{j \in J_j} (\nu_j - 1).\]

Assume $1 \in J_1$ and $\{ (\nu_j , \deg \delta_{b_j} , \deg R_j) \}$ is arranged lexicographically.
We shall compare $(\nu_1, \deg \delta_{b_1}, \deg R_1)$ with $(\mu_1, \deg \delta_{a_1}, \deg Q_1)$.

From the following equations
\[
 \begin{split}
 \mu_1-1 &= \sum_{j \in J_1}(\nu_j -1), \\
 \deg \delta_{a_1} &= \sum_{j \in J_1} \deg \delta_{b_j},\\
 \deg Q_1 &= \sum_{j \in J_1} \deg R_j,
 \end{split}
\]
we conclude that either 
$(\mu_1, \deg \delta_{a_1}, \deg Q_1)$ is of lower order or 
\[
(\mu_1, \deg \delta_{a_1}, \deg Q_1)=(\nu_1, \deg \delta_{b_1}, \deg R_1).
\]
In the latter case 
$$(\nu_j, \deg \delta_{b_j}, \deg R_j) = (1,0,0)$$ 
for all $j \in J_1, j \ne 1$.

When 
$$(\mu_1, \deg \delta_{a_1}, \deg Q_1)=(\nu_1, \deg \delta_{b_1}, \deg R_1),$$
$\delta_{a_1}$ can be split into $\delta_{b_1}, 1, \cdots ,1$ and 
$Q_1$ can be split into $R_1, 1, \cdots, 1$. 
It is easy to see $\delta_{a_1} =\delta_{b_1}$ from the definition of a splitting. By Lemma~\ref{lem:cycle}, we see $Q_1=R_1$.
Now we find the highest order term of $\vec{\nu}$
in $\vec{\mu}$. The same argument will match the other terms of $\vec{\nu}$ with terms in $\vec{\mu}$.
\end{proof}

\subsubsection{Generating functions with relative ancestor insertions} \label{s:6.2.2}
Here we consider the only type of generating functions whose relative insertions contain ancestors.
It naturally occur in our induction process.
 
Let $D \subset S$ be a smooth divisor as before.
Consider a special type of relative weighted partition with ancestors
\begin{equation} \label{e:6.2.1}
 \gamma = \{ (\mu_i, \delta_{a_i} Q_i) \} \sqcup  
   \{ (1, \taub_{\nu_j-1}( \delta_{b_j} R_j )\}.
\end{equation}
That is, the relative insertions with ancestors always have contact order (multiplicity) $1$.
We refer to $\gamma$ as specified by two (primary) weighted
partitions 
$$\vec{\mu}= \{ (\mu_i, \delta_{a_i} Q_i) \} \quad \text{and} \quad
\vec{\nu}=\{ (\nu_j, \delta_{b_j} R_j )\}.$$ 
Define 
\[
\deg_D (\gamma) = \deg_D (\vec{\mu}) + \deg_D(\vec{\nu}).
\]

The relative generating functions 
\[
 \langle~ \omega \mid \gamma ~\rangle^{(X(S),X(D))}_{(\beta_S, \,d)}
\]
with $\gamma$ of the above form will be useful in our induction process.
In particular, the multiplicity $1$ condition comes from 
Lemma~\ref{lem:unstable}.

The following proposition shows that the relative generating functions of the above type can be
determined inductively by generating functions whose relative insertions contains no ancestors.

\begin{prop} \label{prop:ancestor}
Let $\langle~ \omega \mid \gamma ~\rangle^{(X(S),X(D))}_{(\beta_S, \,d)}$ be
the relative generating functions, with $\gamma$ specified by \eqref{e:6.2.1}.

This type of relative invariants $\langle~ \omega \mid \gamma ~\rangle^{(X(S),X(D))}_{(\beta_S, \,d)}$ 
can be determined by the following three types of generating functions:
\begin{enumerate}
\item[(i)] primary relative invariants that are of strictly lower order than 
\[\{ (\beta_S,d), |\!|\omega|\!|, \deg_D(\gamma) \},\] 

\item[(ii)] rubber invariants, and 
\item[(iii)] primary relative invariants of order 
$$
\{ (\beta_S,d), |\!|\omega|\!|, \deg_D(\gamma) \},
$$ 
whose weighted partition dominates $\vec{\mu} \sqcup \vec{\nu}$.
\end{enumerate}
\end{prop}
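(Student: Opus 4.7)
The plan is to induct on the total ancestor degree $K := \sum_j (\nu_j - 1)$ carried by the relative insertions of $\gamma$. The base case $K = 0$ is immediate: then every $\nu_j = 1$, so $\gamma = \vec\mu \sqcup \vec\nu$ is a primary weighted partition, and $\langle~ \omega \mid \gamma ~\rangle$ is itself a primary relative invariant of the declared order $((\beta_S, d), |\!|\omega|\!|, \deg_D \gamma)$ whose weighted partition trivially dominates $\vec\mu \sqcup \vec\nu$, putting it in category (iii).

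For the inductive step, pick an index $j$ with $\nu_j \ge 2$ and apply the topological recursion relation (TRR) for ancestors of the Appendix to the cotangent class $\psi_{n_j}$ at the relative marking carrying $\taub_{\nu_j - 1}(\delta_{b_j} R_j)$. On $\mbar_{0, n+m}$ the TRR writes $\psi_{n_j}$ as a linear combination of boundary divisors; pulling back by $\st$ and invoking the splitting axiom for the virtual class of the relative moduli stack, we obtain an identity expressing
\[
\langle~ \omega \mid \gamma ~\rangle^{(X(S), X(D))}_{(\beta_S, d)} = \sum_\sigma C_\sigma\, \langle~ \cdots ~\rangle \cdot \langle~ \cdots ~\rangle,
\]
where each summand is a convolution of two simpler invariants obtained from a splitting of the source curve. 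On the component containing the marking $j$ the ancestor power strictly decreases to $\taub_{\nu_j - 2}$, while the complementary component has fewer internal insertions, fewer relative markings, smaller curve class, or collapses to a rubber/fiber integral.

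One then classifies each summand. Contributions in which the splitting strictly decreases the curve class or the number of internal insertions of at least one factor give primary relative invariants of strictly lower order, with any residual ancestor powers disposed of by the inductive hypothesis, placing them in category (i). Contributions where the marking $j$ bubbles off onto a contracted component give rubber invariants by the rubber calculus of Section~\ref{s:3.1}, category (ii). The delicate case is when the splitting preserves the order $((\beta_S, d), |\!|\omega|\!|, \deg_D \gamma)$: here a subset $J_i$ of the relative markings of $\gamma$ collapses onto a single bubbled component meeting $X(D)$ at a point of contact order $\mu_i = 1 + \sum_{j \in J_i}(\nu_j - 1)$, producing a primary relative invariant whose weighted partition refines $\vec\mu \sqcup \vec\nu$. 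Lemma~\ref{lem:cycle} and Lemma~\ref{lem:unstable} force the bubbled fiber integral to vanish unless the classes $(\delta_{b_j}, R_j)_{j \in J_i}$ arise from a splitting of some $(\delta_{a_i}, Q_i)$ in the sense of Definition~\ref{d:6.2.1}; the resulting partition then dominates $\vec\mu \sqcup \vec\nu$ per Definition~\ref{d:dominance}, giving category (iii).

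The main obstacle is verifying that every ``same-order'' boundary term dominates $\vec\mu \sqcup \vec\nu$. This is where the non-vanishing selection rules of Section~\ref{s:6.1} are indispensable: without them, the splittings of the source curve could in principle redistribute cohomology classes among relative markings in essentially arbitrary ways, and the dominance relation would be too restrictive to accommodate every surviving boundary contribution. The contact-order-one constraint built into $\gamma$ via \eqref{e:6.2.1} is precisely what makes Lemma~\ref{lem:unstable} applicable and tightly constrains the bubbled contributions to match the Poincar\'e pairings that define a splitting.
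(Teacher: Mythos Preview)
Your overall plan---induct on the ancestor degree and apply the relative TRR of the Appendix to peel off one $\psi$---matches the paper's. The base case and the handling of type~(i) terms (diagonal splittings with smaller curve class or fewer internal insertions) are fine. However, your treatment of the ``same-order'' boundary terms contains a genuine gap.

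After a single TRR step, the second class of boundary terms in \eqref{e:A.0.1} is a product of a relative invariant on $(X(S),X(D))$ and a \emph{rubber} invariant on $X(P)^\sim$; all of the original relative insertions $\gamma$ sit on the rubber side. When the $(X(S),X(D))$ factor has the top order $\{(\beta_S,d),|\!|\omega|\!|\}$, it is of the form $\langle\,\omega\mid\gamma'\,\rangle$ where $\gamma'$ is \emph{not} primary: it again has the shape \eqref{e:6.2.1}, specified by some $\vec\mu'\sqcup\vec\nu'$, with strictly fewer ancestor insertions than $\gamma$ (the $j$-th ancestor is absorbed by the unique stable rubber component via Proposition~\ref{prop:rubbercalc}). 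One must then invoke the inductive hypothesis on $\gamma'$, check that the nonvanishing of the rubber fiber integral forces $\deg_D(\gamma')\ge\deg_D(\gamma)$, and---in the equality case---that $\vec\mu'\sqcup\vec\nu'$ dominates $\vec\mu\sqcup\vec\nu$. Transitivity of dominance then closes the induction. Your write-up skips this intermediate $\gamma'$ entirely and asserts that a single TRR step already yields a primary invariant with a specific contact-order formula $\mu_i=1+\sum_{j\in J_i}(\nu_j-1)$; that formula does not describe what the relative TRR actually produces, and the primary invariants in (iii) only appear after the full induction has been run.

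A second, smaller issue: your category~(ii) is phrased as ``the marking $j$ bubbles off onto a contracted component,'' which is not how rubbers enter here. Every boundary term of the second kind in \eqref{e:A.0.1} carries a rubber factor; these are the rubber invariants in (ii), and they are controlled by Proposition~\ref{prop:rubbercalc} (not Section~\ref{s:3.1}). Lemmas~\ref{lem:cycle} and~\ref{lem:unstable} are not invoked at this stage of the paper's argument; the dominance of $\vec\mu'\sqcup\vec\nu'$ over $\vec\mu\sqcup\vec\nu$ comes from a dimension count on the stable rubber component ensuring the ancestors there are nonvanishing.
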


\begin{proof}
If there is no ancestor involved, the statement is trivial by (iii).
Otherwise we apply the topological recursion relation for ancestors and
perform induction on the number of ancestor insertions.

If we apply TRR once to
$\langle~ \omega \mid \gamma ~\rangle ^{(X(S), X(D))}_{(\beta_S, \,d)}$ 
and lower $\taub_{\nu_j-1}$ in its relative insertion to $\taub_{\nu_j-2}$, 
we get a positive linear combination of terms either of the form
(see Appendix A)
\[
\langle~
\omega_A \cdot \alpha \mid \gamma_A
~\rangle_{(\beta_A, \,d_A)}^{(X(S), X(D))}
\cdot
\langle~
\omega_B \cdot \alpha^{\vee} \mid \gamma_B
~\rangle_{(\beta_B, \,d_B)}^{(X(S), X(D))}
\] 
where 
\[
 \begin{split}
 |\!|\omega_A|\!| + |\!|\omega_B|\!| &= |\!|\omega|\!|, \\
 (\beta_A,d_A) + (\beta_B,d_B) &= (\beta_S,d),
 \end{split}
\]
or of the form
\[
\langle~
\omega_{\Xi} \mid \gamma_{\Xi}
~\rangle^{\bullet (X(S), X(D))}_{_{\Xi}}
\cdot {p_X}_*\big(
\langle~
\gamma_{\Xi'} \mid \omega_{\Xi'} \mid \gamma
~\rangle^{\bullet X(P) \sim}_{_{\Xi'}} \big),
\]
where $p_X$ is the composition of maps 
$$X(P) \to X(D) \hookrightarrow X(S).$$

For a term of the form 
\[
\langle~
\omega_A \cdot \alpha \mid \gamma_A
~\rangle_{(\beta_A,\,d_A)}^{(X(S), X(D))}
\cdot
\langle~
\omega_B \cdot \alpha^{\vee} \mid \gamma_B
~\rangle_{(\beta_B,\,d_B)}^{(X(S), X(D))},
\] 
if $(\beta_A, d_A)$ equals $(\beta_S, \,d)$ then there is no relative insertion in $\gamma_B$. As either factor contains at least 3 marked point,  $|\!|\omega_B|\!| +1 \ge 3$. Then 
\[ 
|\!|\omega_A|\!| +1 = |\!|\omega|\!| - |\!|\omega_B|\!| +1 < |\!|\omega|\!|.
\]
Thus either factor gives rise to terms of lower order than $\{ (\beta_S,d), |\!|\omega|\!| \}$.

For  a term of the form
\[
\langle~
\omega_{\Xi}\mid \gamma_{\Xi}
~\rangle^{\bullet (X(S), X(D))}_{_{\Xi}}
\cdot {p_X}_*
\langle~
\gamma_{\Xi'}\mid \omega_{\Xi'}\mid \gamma
~\rangle^{\bullet X(P) \sim}_{_{\Xi'}},
\]
the factor
$
\langle~
\omega_{\Xi} \mid \gamma_{\Xi}
~\rangle^{ \bullet (X(S), X(D))}_{_{\Xi}}
$ will produce primary relative invariants of order $\{ (\beta_S,d) ,|\!|\omega|\!| \}$
only when $\Xi$ specifies a connected curve with classes $(\beta_S,d)$ and $\omega_{\Xi} =\omega$. Then it is of the form 
$\langle~ \omega \mid  \gamma' ~\rangle ^{(X(S), X(D))}_{(\beta_S, \,d)}$ and the curve classes for $\langle~ \gamma_{\Xi'} \mid \omega_{\Xi'} \mid \gamma
~\rangle^{ \bullet (X(P))\sim}_{_{\Xi'}}$ are fiber classes.

If $\langle~
\gamma_{\Xi'} \mid \omega_{\Xi'} \mid \gamma
~\rangle^{ \bullet X(P) \sim}_{_{\Xi'}}$
is nonzero, by Proposition \ref{prop:rubbercalc} we see that
$\Xi'$ contains a stable component with at least 3 marked points, 
and all other possible components are unstable.
For such a configuration, 
the ancestor insertion $\taub_{\nu_j-2}$ belongs to the stable component,
and on any unstable component the contact order and insertion for $\gamma$ and $\gamma'$ should match .
This implies $\gamma'$ is specified by some partitions
$\vec{\mu'}$ and $\vec{\nu'}$, and it 
has at least one less ancestor insertions than $\gamma$.
Therefore we can assume the proposition holds for
$\langle~ \omega \mid\gamma'  ~\rangle ^{(X(S), X(D))}_{(\beta_S, \,d)}$.

As $\langle~
\gamma_{\Xi'} \mid \omega_{\Xi'} \mid \gamma
~\rangle^{ \bullet X(P) \sim}_{_{\Xi'}}~$ is a non-zero fiber integral, we have
\[ \deg_D(\gamma')  \ge \deg_D(\gamma)= \deg_D(\vec{\mu} \sqcup \vec{\nu} ).\]

If $\vec{\mu'} \sqcup \vec{\nu'}$ dominates $\vec{\mu} \sqcup \vec{\nu}$
 when  
$\deg_D(\gamma')  = \deg_D(\gamma)$,
 then by the transitivity
of dominance, the proposition is proved for 
\[\langle~
\omega \mid \gamma
~\rangle^{(X(S),X(D))}_{(\beta_S, \,d)}.\]

The fact that $\vec{\mu'} \sqcup \vec{\nu'}$ dominates $\vec{\mu} \sqcup \vec{\nu}$
follows from a simple dimensional count that the stable rubber component should have
enough points to guarantee the non-vanishing of ancestors invariants.
\end{proof}

\subsubsection{A dominance lemma}
The following lemma will be used in the proof of Proposition~\ref{prop:inversion'}.

\begin{lemma} \label{lem:stable}
If the generating function \[
\Big( \int_{D} 
\delta_{a}^{\vee} \prod_{j} \delta_{b_j} \Big) 
\Big\langle~(c, Q^{\vee}) \mid t^{k} \prod_{j} \taub_{\nu_{j-1}}(tR_j)~
\Big\rangle^{(\bp^1, \,\{0\})\times X(\cc)}_{(c[\bp^1],\,0)} 
\] is nonzero, then
$(c, ~\delta_{a}Q)$ dominates 
$\{ (\nu_j ,  ~\delta_{b_j}R_j)\}$.
\end{lemma}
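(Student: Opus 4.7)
The plan is to apply the product formula for $X \times (Y, D)$ with $X = X(\cc)$ and $(Y, D) = (\bp^1, \{0\})$ (Theorem~\ref{t:product}) to decompose the given invariant into a product. Writing the insertions as pure tensors ($t^k = t^k \otimes 1$, $tR_j = t \otimes R_j$, $Q^\vee = 1 \otimes Q^\vee$), the formula in its ancestor-refined form from the remark after Theorem~\ref{t:product} yields, for each $n$,
\[
\Big\langle (c, Q^\vee) \,\Big|\, t^k, \taub_{\nu_j - 1}(tR_j) \Big\rangle^{(\bp^1, \{0\}) \times X(\cc)}_{c[\bp^1] + n\ell} = A \cdot B_n,
\]
where $A = \langle (c, 1) \mid t^k, \taub_{\nu_j - 1}(t) \rangle^{(\bp^1, \{0\})}_{c[\bp^1]}$ and $B_n = \langle Q^\vee, 1, \taub_{\nu_j - 1}(R_j) \rangle^{X(\cc)}_{n\ell}$. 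The hypothesis of the lemma then forces $A \neq 0$, $\int_D \delta_a^\vee \prod_j \delta_{b_j} \neq 0$, and $B_n \neq 0$ for at least one $n \geq 0$. A virtual dimension count on the $(\bp^1, \{0\})$ side (moduli dimension $c + l$ matched against integrand degree $k + \sum_j \nu_j$) pins down $k = c - \sum_j(\nu_j - 1) \geq 0$.

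I would then split into cases according to the value of $k$. If $k \geq 2$, then $c - 1 > \sum_j(\nu_j - 1)$ and dominance follows at once from the first bullet of Definition~\ref{d:dominance}. If $k = 0$, the marked point carrying the trivial insertion $t^0 = 1$ has no ancestor class, so the genus-zero string equation on the product target reduces the invariant to a linear combination of invariants with one $\nu_j$ decreased by one, each of which falls into the $k = 1$ analysis below; strict $k < 0$ makes $A$ vanish and contradicts the hypothesis. Hence the essential case is $k = 1$, i.e., $c - 1 = \sum_j(\nu_j - 1)$, in which I have to verify the splitting condition from the second bullet of Definition~\ref{d:dominance}.

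For the $k = 1$ analysis I first handle $n = 0$. Here $\mbar_{0, l+2}(X(\cc), 0) \cong X(\cc) \times \mbar_{0, l+2}$, so
\[
B_0 = \Big(\int_{X(\cc)} Q^\vee \cdot \prod_j R_j\Big) \int_{\mbar_{0, l+2}} \prod_j \psi_{j+2}^{\nu_j - 1}.
\]
Kontsevich's formula makes the $\psi$-integral nonzero iff $\sum_j(\nu_j - 1) = l - 1$, which combined with $c - 1 = \sum_j(\nu_j - 1)$ forces $c = l$; the $X(\cc)$-factor being nonzero is precisely the statement that $\st_*(\llangle Q^\vee \prod_j R_j \rrangle^{X(\cc)}_0)$ is a nonzero top cycle on $\mbar_{0, 1+l}$, which together with $\int_D \delta_a^\vee \prod_j \delta_{b_j} \neq 0$ gives the splitting of $(\delta_a, Q)$ into $\{(\delta_{b_j}, R_j)\}$, hence dominance. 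For $n \geq 1$ I would mimic the proof of Lemma~\ref{lem:cycle}: any stable map of class $n\ell$ factors through $Z \cong \bp^r \subset X(\cc)$, so $Q^\vee$ and $R_j$ may be replaced by their $\xi|_Z = 0$ restrictions, and a degree count combining the ancestor constraints with the virtual class correction from the obstruction bundle $N_{Z/X}$ on $\mbar_{0, l+2}(Z, n\ell)$ shows the contribution either vanishes or reproduces the splitting condition. The hard part will be precisely this last step, where the interplay between the nontrivial obstruction bundle and the ancestor $\psi$-classes makes Lemma~\ref{lem:cycle}'s clean top-cycle argument not immediately transferable, and one must use either the forgetful/string-equation bridge $\mbar_{0, l+2} \to \mbar_{0, l+1}$ on the $X(\cc)$ side or a direct virtual-class calculation to finish.
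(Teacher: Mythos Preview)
Your decomposition into a product of two \emph{numbers} $A\cdot B_n$ is where the argument breaks. Theorem~\ref{t:product} expresses the Gromov--Witten correspondence of the product target as a cup product of classes in $H^*(\mbar_{0,1+k+l})$, so the invariant in question equals $\int_{\mbar}R_{(\bp^1,\{0\})}\cdot R_{X(\cc)}$, not $(\int_{\mbar}R_{(\bp^1,\{0\})})\cdot(\int_{\mbar}R_{X(\cc)})$. In particular the ancestors $\taub_{\nu_j-1}$, being $\st^*\psi$-classes, may be absorbed into one factor by the projection formula, but not placed on both as in your $A$ and $B_n$. Your $B_n$ also drops the $k$ marked points carrying the trivial $X(\cc)$-insertion; the correct $X(\cc)$-side cycle is $\st_*\llangle Q^\vee,1^{k},\prod_jR_j\rrangle_{n\ell}$.

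The miscount propagates: the virtual dimension on the $(\bp^1,\{0\})$ side with $1+k+l$ marked points is $c+k+l-1$, not $c+l$, so treating $A$ as a number the constraint $A\neq0$ reads $c-1=\sum_j(\nu_j-1)$ with $k$ cancelling on both sides. Thus $k$ is \emph{not} determined, and your case split on $k$ (including the string-equation manoeuvre at $k=0$, where there is no extra marked point to remove) is not the right dichotomy.

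The paper works at the cycle level throughout. The pushforward $\st_*\llangle (c,1)\mid t^k\prod_j\taub_{\nu_j-1}(t)\rrangle$ has dimension $c-1-\sum_j(\nu_j-1)$; nonvanishing of the intersection forces this to be nonnegative, giving the first bullet of Definition~\ref{d:dominance} when strict. In the equality case the $(\bp^1,\{0\})$-cycle is zero-dimensional, so the $X(\cc)$-cycle $\st_*\llangle Q^\vee,1^{k},\prod_jR_j\rrangle_{n\ell}$ must be top-dimensional on $\mbar_{0,1+k+l}$ for some $n$; via the forgetful map to $\mbar_{0,1+l}$ this is \emph{literally} the splitting condition of Definition~\ref{d:6.2.1}, and together with $\int_D\delta_a^\vee\prod_j\delta_{b_j}\neq0$ yields the second bullet. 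No separate $n=0$ versus $n\geq1$ analysis is needed, which is exactly the step you flagged as hard.
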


\begin{proof}

The generating function 
\[
\langle~(c, Q^{\vee}) \mid t^{k} \prod \taub_{\nu_{j-1}}(tR_j)~\rangle^{(\bp^1, \,\{0\})\times X(\cc)}_{(c[\bp^1], 0)} 
\]
is the following sum
\[
\sum_{n \ge0}
\langle~(c, Q^{\vee}) \mid t^{k} \prod  \taub_{\nu_{j-1}}(tR_j)~\rangle_{(c, \,n\ell)} q^{(c,n\ell)},
\]
where
$(c,n\ell)$ represents the curve class $c[\bp^1] +n\ell$ on $\bp^1 \times X(\cc).$

By the product formula \cite{LQ},

$$\langle~(c, Q^{\vee}) \mid t^{k} \prod  \taub_{\nu_{j-1}}(tR_j )~\rangle_{(c,n\ell)}$$
is the intersection  on $\overline{\M}_{0,  1+k+ | \{j \}|}$ of the cycles
$$\st_* \Big( \llangle~~ (c,1) \mid t^{k} \prod \taub_{\nu_{j-1}} (t)~ \rrangle^{ ( \bp^1, \, \{0\}) }_{ c[\bp^1], (c)} \Big)$$ and
 $$\st_* \Big(  \llangle~ Q^{\vee}, 1^{k}\prod R_j ~\rrangle_{n\ell}^{X(\cc)} \Big).$$
(In case there is only one internal insertion, since there is no ancestor, we use the divisor equation to create a divisor insertion $t$, and then the following argument goes through.)

The cycle from $(\bp^1, \,\{0\})$ has dimension
$ c-1 - \sum (\nu_j -1 )$, which is non-zero only if  \[ c-1  \ge (\nu_j -1 ).\]
And if \[ c-1  = \sum (\nu_j -1 ), \]
the cycle is zero dimensional, and then the cycle from $X(\cc)$ is top dimensional.
From here we see $(c, ~\delta_{a}Q)$ dominates 
$\{ (\nu_j ,  ~\delta_{b_j}R_j)\}$.
\end{proof}

\subsection{Fiber class}

\begin{thm} \label{thm:fibertypei}
Fiber class type I invariants of $(P, P_0)$ and $(P,P_\infty)$ are $\T$-invariant.
\end{thm}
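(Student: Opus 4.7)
The plan is to adapt the proof of Theorem~\ref{thm:fibertypeii} (the type II case) to the type I setting, since the only essential input is that we are dealing with fiber integrals. First, observe that fiber class type I invariants of $(P, P_0)$ (resp.\ $(P, P_\infty)$) are fiber integrals for the bundle $X(P) \to D$. Exactly as in Section~\ref{s:5.2.2}, they fit in a cartesian diagram
\[
\xymatrix{
(X(P), X(P_0)) \ar[r]\ar[d] & \bigl([\bp^1/G_m], [\{0\}/G_m]\bigr) \times [X(\cc)/G] \ar[d]\\
D \ar[r] & BG_m \times BG,
}
\]
where $G = GL_{r+1} \times GL_{r+1}$, and analogously for $(P, P_\infty)$.

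Next, I would apply Proposition~\ref{prop:bir} to replace $D$ by any birational model $D' \to D$ without altering the fiber integrals. Choose $D' \to D$ so that the pullbacks of $F$ and $F'$ admit complete flags (Lemma~\ref{l:5.3.1}); by Lemma~\ref{l:deform} and the deformation invariance Proposition~\ref{prop:definv}, we may then further deform these bundles to direct sums of line bundles. The net effect is that we may assume the structure group has been reduced from $G$ to the torus $T = \prod_1^{2r+2} G_m$, giving a diagram
\[
\xymatrix{
(X(P'), X(P'_0)) \ar[r]\ar[d] & \bigl([\bp^1/G_m], [\{0\}/G_m]\bigr) \times [X(\cc)/T] \ar[d]\\
D' \ar[r] & BG_m \times BT.
}
\]

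Now the product formula \cite{LQ} (Theorem~\ref{t:product}) for $(\bp^1, \{0\}) \times X(\cc)$ expresses each such fiber integral as a product of three factors: (i) classical intersection numbers on $D'$, which are manifestly unchanged by $\T$, (ii) equivariant relative invariants of the rigid target $(\bp^1, \{0\})$ (or $(\bp^1, \{\infty\})$), which involve no data on the $X$ side and are hence trivially $\T$-invariant, and (iii) $T$-equivariant absolute invariants of $X(\cc)$. Thus the whole statement reduces to $\T$-invariance of the $T$-equivariant absolute invariants of $(\Spec \cc, \mathscr{O}^{\oplus r+1}, \mathscr{O}^{\oplus r+1})$. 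Since $A_*(BT)$ is determined by $\{A_*(\prod_1^{2r+2}\bp^n)\}_{n\ge 1}$, this in turn follows from the split toric case $\{\prod_1^{2r+2} \bp^n,\ \boxplus \mathscr{O}(1),\ \boxplus \mathscr{O}(1)\}$ settled in Part~II \cite{LLW2}.

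The main obstacle that might seem to require extra work is that type I invariants, unlike type II, do not satisfy an unconditional strong virtual pushforward property (Proposition~\ref{prop:svpf}(2) requires $\int_\beta Y_0 \ge 0$). However, this concern is irrelevant here: our reduction proceeds by deformation and the product formula rather than by pushforward to $D$, and the $(\bp^1, \{0\})$ factor of the product formula carries the full information of the relative divisor without any nonnegativity hypothesis. Thus the argument of Theorem~\ref{thm:fibertypeii} goes through essentially verbatim, with $([\bp^1/G_m], [\{0\}/G_m])$ replacing $([\bp^1/G_m], [\{0\}/G_m], [\{\infty\}/G_m])$ throughout.
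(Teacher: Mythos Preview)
Your proposal is correct and matches the paper's approach exactly: the paper simply states that the proof is ``entirely similar to the proof of Theorem~\ref{thm:fibertypeii} and is omitted,'' and you have spelled out precisely that adaptation, replacing the type~II divisorial log structure $([\bp^1/G_m],[\{0\}/G_m],[\{\infty\}/G_m])$ by the type~I version and otherwise repeating the birational reduction, deformation to split bundles, and product-formula argument verbatim.
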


\begin{proof}
The proof is entirely similar to the proof of Theorem~\ref{thm:fibertypeii} and is omitted.
\end{proof}

\subsection{Positivity of certain relative two-pointed invariants on $(\bp^1, \{\infty \})$} \label{ss:nonzero}
Here we establish some positivity lemmas of certain relative integrals on $(\bp^1, \{\infty \})$.
These integrals naturally occur in the degeneration process as``coefficients'' of the highest order terms.
We need the non-vanishing in order for the inversion of degeneration to work.
(See, e.g., the proof of Proposition~\ref{prop:inversion'}.)

\begin{lemma} \label{=1}
For relative invariants of $(\bp^1, \{\infty \})$, we have
\[
\langle~ (c,1) \mid t^c \taub_{c-1}(t) ~\rangle_{c[\bp^1]} =1.
\]
\end{lemma}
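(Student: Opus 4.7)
The plan is a direct computation. Let $M_c := \mbar_{0,c+2}\bigl((\bp^1,\infty);\, c[\bp^1], (c)\bigr)$, the moduli of genus zero relative stable maps with one totally ramified relative marked point and $c+1$ interior marked points; it has virtual dimension $2c$, matching the total codimension of the integrand $t^c\cdot\taub_{c-1}(t)$. Because $\bp^1$ is convex and $f^*T_{\bp^1}(-\log\infty)\cong \oo(c)$ has vanishing $H^1$, the moduli is smooth on the open stratum of irreducible source and non-expanded target, where the virtual class coincides with the fundamental class.

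First I would rewrite $\taub_{c-1} = \st^*\psi_{c+1}^{c-1}$ for the stabilization $\st\colon M_c\to\mbar_{0,c+2}$ and apply the projection formula:
\[
\langle (c,1) \mid t^c\,\taub_{c-1}(t) \rangle_{c[\bp^1]}
= \int_{\mbar_{0,c+2}} \psi_{c+1}^{c-1}\cdot\st_*\Bigl(\prod_{i=1}^{c+1}\ev_i^*(t)\Bigr).
\]
A dimension count shows that the pushforward is of the form $N\cdot[\mbar_{0,c+2}]$ for some integer $N$, so together with the standard identity $\int_{\mbar_{0,c+2}}\psi_{c+1}^{c-1}=1$, the lemma reduces to showing $N=1$.

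The key calculation is to compute $N$ by counting points in a generic fiber of $\st$. Fix a generic element $[\bp^1;\, p_1,\ldots,p_{c+1},q]\in\mbar_{0,c+2}$ and generic target values $y_1,\ldots,y_{c+1}\in\bp^1\setminus\{\infty\}$. Then $N$ equals the number of degree $c$ maps $f\colon\bp^1\to\bp^1$ with $f^{-1}(\infty)=c\cdot q$ and $f(p_i)=y_i$ for every $i$. In an affine coordinate on the source placing $q$ at $\infty$, such maps are exactly polynomials of degree $c$; the $c+1$ interpolation conditions $f(p_i)=y_i$ form a Vandermonde linear system in the $c+1$ coefficients, invertible for distinct $p_i$, yielding a unique solution. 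Hence $N=1$.

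The main obstacle is ruling out boundary contributions from source-curve degenerations or target expansions at $\infty$. A dimension count together with genericity of the $y_i$ shows that every such boundary stratum, after imposing the $c+1$ interior point constraints, has strictly negative expected dimension, so it does not intersect the zero-dimensional support of the integrand. This is the standard analysis for genus zero relative Gromov--Witten invariants of $(\bp^1,\infty)$, and combined with the smoothness on the open stratum identified above, it justifies the fundamental-class computation that produces $N=1$.
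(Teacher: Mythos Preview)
Your proposal is correct and follows essentially the same approach as the paper: both observe that the moduli space $\mbar_{0,c+2}((\bp^1,\infty);c[\bp^1],(c))$ is smooth of dimension $2c$, reduce the invariant to an enumerative count of degree $c$ maps from a fixed $(c+2)$-pointed $\bp^1$ totally ramified over $\infty$ with prescribed images of the interior points, and solve that count as $1$ via the Vandermonde/interpolation argument. The only packaging difference is that you separate the computation via the projection formula and the identity $\int_{\mbar_{0,c+2}}\psi_{c+1}^{\,c-1}=1$, whereas the paper works directly with the degree of the combined morphism $\prod_i \ev_i \times \st$ to $(\bp^1)^{c+1}\times\mbar_{0,c+2}$; these two routes are equivalent.
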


\begin{proof}
Let $\mbar_{0,\,c+2}(\bp^1,\{ \infty\}; (c,1))$ be Kim's moduli stack of log maps of degree $c$ with 
one fully ramified marked point.

Consider the map
\[
\xymatrix{
\mbar_{0,c+2}(\bp^1,\{\infty\}; (c,1))\ar[rr]^{\prod ev_i \times \st} && \prod_{i=1}^{c+1} \bp^1 \times \mbar_{0, \,c+2},
}
\]
where $ev_i$'s are the $c+1$ evaluation maps determined by the internal marked points, and $\st$ is the the map stabilizing the source curve.
We have
\begin{equation*}
\begin{split}
&(\prod ev_i \times \st)_*[\mbar_{0,c+2}(\bp^1,\{ \infty \}; (c,1))]^{vir} \\
&\qquad = 
\langle~  (c,1)\mid t^c \bar{\tau}_{c-1}(t)~\rangle \Big[ ~\prod_{i=1}^{c+1} \bp^1 \times \mbar_{0, \,c+2}~\Big].
\end{split}
\end{equation*}
As $\mbar_{0,c+2}(\bp^1,\{ \infty \}; (c,1))$ is smooth of dimension $2c$, 
 \[ \langle~  (c,1)\mid t^c \bar{\tau}_{c-1}(t)~\rangle=
 \# (\prod ev_i\times \st)^{-1} \big\{ p_1, \cdots, p_{c+1}, (\bp^1, x_1, \cdots, x_{c+2})\big\} \]
 for 
generic $\big\{ p_1, \cdots, p_{c+1}, (\bp^1, x_1, \cdots, x_{c+2}) \big\}.$
It is not hard to see this equals one. 
(Consider $f: \bp^1 \to \bp^1$ such that $f^{-1}(\infty) = c x_{c+2}$, $f(x_i)=p_i$.
Assume that $x_{c+2} = \infty$, then such a map looks like 
\[f: [x,y] \mapsto \Big[~\sum_{i=0}^c a_i x^iy^{c-i}, y^c ~\Big].\] 
The constraints
$f(x_i)=p_i$ determine the coefficients $\{ a_i \}$ uniquely.)
\end{proof}

\begin{lemma} \label{l:6.4.2}
\[
 \langle~ (c,1) \mid t^{c'} \taub_{c-1}(t) ~\rangle_{c[\bp^1]} > 0\quad \mbox{when} \quad c' > c.
\]
\end{lemma}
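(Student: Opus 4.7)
Plan: We establish the stronger identity
\begin{equation*}
\langle~(c,1) \mid t^{c'} \taub_{c-1}(t)~\rangle_{c[\bp^1]} = c^{c'-c},
\end{equation*}
which is positive for $c \ge 1$ and $c' > c$. We proceed by induction on $c'-c$; the base case $c'=c$ is Lemma~\ref{=1}. The inductive step is the identity
\begin{equation*}
\langle~(c,1) \mid t^{c'} \taub_{c-1}(t)~\rangle_{c[\bp^1]} = c \cdot \langle~(c,1) \mid t^{c'-1} \taub_{c-1}(t)~\rangle_{c[\bp^1]},
\end{equation*}
which is a divisor equation for ancestors on the relative moduli.

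Let $\mbar' := \mbar_{0, c'+2}(\bp^1, \{\infty\}; (c,1))$ and $\mbar := \mbar_{0, c'+1}(\bp^1, \{\infty\}; (c,1))$, and let $\pi \colon \mbar' \to \mbar$ be the forgetful map dropping one non-ancestor, non-relative internal marked point $x_{c'}$ (carrying insertion $t$). Let $p \colon \mbar_{0,c'+2} \to \mbar_{0,c'+1}$ be the corresponding forgetful map on moduli of pointed curves; it is compatible with $\pi$ via the stabilization maps. The $\psi$-class at the ancestor point $x_{c'+1}$ satisfies $\psi_{c'+1} = p^*\psi_{c'+1} + D$ with $D = D_{c', c'+1}$, and $\psi_{c'+1} \cdot D = 0$ on $\mbar_{0,c'+2}$. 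A short binomial expansion then gives
\begin{equation*}
\phib_{c'+1}^{c-1} = \pi^*\bigl(\phib_{c'+1}^{c-1}\bigr) + \pi^*\bigl(\phib_{c'+1}^{c-2}\bigr) \cdot \Delta,
\end{equation*}
with $\Delta := \st^*(D)$. Inserting this into the target integral splits it into a main term and a boundary term.

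The main term is evaluated by the projection formula together with the basic divisor equation $\pi_*(\ev_{c'}^* t) = \int_{c[\bp^1]} t = c$, yielding exactly $c \cdot \langle~(c,1) \mid t^{c'-1} \taub_{c-1}(t)~\rangle_{c[\bp^1]}$. On the support of $\Delta$, the stabilized source carries a $\bp^1$-bubble $B$ with only $x_{c'}$, $x_{c'+1}$, and one node. Any further unstable component in the actual source would need a non-constant map contributing positive degree, which added to the main component would exceed the total degree $c$; hence the actual source coincides with its stabilization. Since the relative marked point $x_{c'+2}$ lies off $B$ and its contact order $c = \deg f$ at $\infty$ already saturates the degree, all of the degree is absorbed by the main component, so $f|_B$ is constant. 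Consequently $\ev_{c'} = \ev_{c'+1}$ on $\Delta$, and the boundary integrand carries the factor $\ev_{c'}^*(t) \cdot \ev_{c'+1}^*(t) = t^2 = 0$ in $H^*(\bp^1)$, killing the boundary term.

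The main obstacle is precisely this boundary analysis: verifying that the full-ramification condition forces the bubble to be contracted, which is what makes the $t^2 = 0$ collapse on $\bp^1$ work. Granting this, the inductive identity yields $\langle (c,1) \mid t^{c'} \taub_{c-1}(t) \rangle_{c[\bp^1]} = c^{c'-c}$, which is strictly positive for $c \ge 1$.
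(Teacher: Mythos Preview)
Your approach is essentially the same as the paper's: both use the divisor equation for ancestors and induction on $c'$. The difference is that the paper only proves the \emph{inequality}
\[
\langle (c,1)\mid t^{c'}\taub_{c-1}(t)\rangle \ \ge\ c\cdot \langle (c,1)\mid t^{c'-1}\taub_{c-1}(t)\rangle,
\]
arguing that the correction (your boundary term) is nonnegative because $t$ is ample on $\bp^1$, $\psi$ is nef on $\mbar_{0,n}$, and the relevant moduli space is unobstructed. You instead claim the correction vanishes outright, yielding the exact value $c^{c'-c}$.

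The gap is in your boundary analysis. Your argument that $f|_B$ is constant rests on the claim that the component carrying the relative point $x_{c'+2}$ must have degree $\ge c$ (since its contact order at $\infty$ equals $c$), hence absorbs all of the degree. This is correct only when the target is \emph{unexpanded}. In the relative moduli space $\mbar_{0,c'+2}(\bp^1,\{\infty\};(c))$ the target may bubble off rubber $\bp^1$'s, and the relative marked point can sit on a rubber component. Such a component contributes zero to the curve class $c[\bp^1]$, so the contact-order-saturates-degree argument collapses. Concretely, for $c=2$, $c'=3$, take a chain $C_{M,1}\cup C_{M,2}\cup C_B$ where $C_{M,1}$ (carrying $x_1,x_2$) and $C_B$ (carrying $x_3,x_4$) each map with degree $1$ to the rigid $\bp^1$ with their nodes at $\infty$, while $C_{M,2}$ (carrying $x_5$) is a degree $2$ cover of a single rubber, ramified to order $2$ over $\infty$ and meeting the two rigid pieces over $0$ with contact orders $(1,1)$. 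This is a stable relative map lying in $\st^{-1}(D_{3,4})$, yet $f|_{C_B}$ is a degree $1$ isomorphism, so $\ev_3\ne \ev_4$ generically and the $t^2=0$ trick does not apply. A dimension count shows this stratum is top-dimensional in the boundary, so it is not discarded for free.

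Thus your vanishing claim is not justified, and with it the exact formula $c^{c'-c}$. The paper's route---replacing ``$=0$'' by ``$\ge 0$'' via nefness---sidesteps exactly this difficulty and still yields the required positivity.
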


\begin{proof}

Note that $t$ is ample on $\bp^1$ and $\psi$ classes are ample on $\mbar_{0,n}$ (by the stability condition). The divisor equation then implies
\[ 
\langle~ (c,1) \mid t^{c'} \taub_{c-1}(t) ~\rangle_{c[\bp^1]} \ge \Big(\int_{c[\bp^1]} t \Big)  \cdot \langle~ (c,1) \mid t^{c'-1} \taub_{c-1}(t) ~\rangle_{c[\bp^1]}.
\]
The lemma follows by induction and Lemma~\ref{=1}.
\end{proof}

\subsection{Non-fiber class}

\subsubsection{Reduction to non-distinguished insertions}
Consider Type I invariants of $(X(P),X(P_0))$ with $l \ge 1$ distinguished insertions. 

\begin{lemma} \label{lem:rmdist}
As before, $\omega$ stands for a non-distinguished insertion. We have
\begin{multline*}
\Big\langle~\ \vec{\nu}\ \mid \omega \cdot  \prod_{i=1}^{l}{i_0}_*(\alpha_i ) 
~\Big\rangle_{(\beta_P, \, d)}^{(X(P),X(P_0))}
=\\ \sum_{I}\sum_{\eta}
C_\eta \,\Big\langle~\ \vec{\nu}\ \mid \omega_1 \cdot  \prod_{i=1}^{l}{i_0}_*(\alpha_i )
\mid \mu, e^I \ ~\Big\rangle_{\Gamma_1}
\cdot
{p_X}_*( \langle~ \mu ,e_I \mid \omega_2    ~\rangle_{\Gamma_2} ).
\end{multline*}
\end{lemma}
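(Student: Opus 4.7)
The plan is to apply the degeneration formula to a deformation of $X(P)$ to the normal cone of $X(P_\infty)$, viewed as a degeneration of the smooth pair $(X(P), X(P_0))$ rather than of $X(P)$ alone. This is the mirror image of the argument used in the proof of Theorem~\ref{thm:typeii}: there, degenerating along $X(P_\infty)$ peeled off distinguished $\infty$-insertions onto the $\Bbb P^1$-bundle side; here, with only Type~I structure and only $0$-distinguished insertions, the same degeneration will trap all the distinguished insertions on the rigid $X(P)$ side, producing the stated formula.

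Concretely, I would form
\[\bar W \;=\; \bl_{X(P_\infty)\times\{0\}}\!\bigl(X(P)\times \A^1\bigr)\longrightarrow \A^1,\]
whose generic fibre is $X(P)$ and whose central fibre is $X(P)\cup_{X(P_\infty)} Q$ with $Q=\Bbb P_{X(P_\infty)}(N_{X(P_\infty)/X(P)}\oplus \oo)$. Since $X(P_0)\cap X(P_\infty)=\varnothing$ on the generic fibre, the divisor $X(P_0)\times \A^1$ is disjoint from the blow-up centre; its strict transform $\widetilde{X(P_0)\times \A^1}$ is therefore isomorphic to $X(P_0)\times \A^1$, is flat over $\A^1$, and meets the central fibre entirely inside the $X(P)$ component (missing $Q$). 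Thus $(\bar W,\widetilde{X(P_0)\times \A^1})\to \A^1$ is a family of smooth pairs degenerating $(X(P),X(P_0))$ to $(X(P)\cup_{X(P_\infty)} Q,\,X(P_0))$.

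I would then apply the relative degeneration formula of Section~\ref{deg-formula} to this family. Splittings $\eta=(\Gamma_1,\Gamma_2)$ decompose the LHS into products in which $\Gamma_1$ lives on $(X(P),X(P_0),X(P_\infty))$ (Type~II, with $\vec\nu$ at $X(P_0)$ and the gluing data $\mu,e^I$ at $X(P_\infty)$) and $\Gamma_2$ lives on $(Q,Q_0)$ (Type~I, with $Q_0=X(P_\infty)$ the gluing divisor carrying $\mu,e_I$); the coefficient $C_\eta$ is the usual gluing constant and the pushforward ${p_X}_*$ identifies curve-class Novikov variables via $Q\to X(P_\infty)\hookrightarrow X(P)$. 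The non-distinguished insertions $\omega$ distribute freely as $\omega_1\omega_2$ between the two factors.

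The key point — and the only thing to verify beyond invoking the formula — is that every distinguished insertion ${i_0}_*(\alpha_i)$ must be assigned to $\Gamma_1$. The class ${i_0}_*(\alpha_i)=[X(P_0)]\cup\pi^*\alpha_i$ extends canonically to $\bar W$ as $[\widetilde{X(P_0)\times \A^1}]\cup\pi^*\alpha_i$ precisely because its support avoids the blow-up centre. Restricting this extension to the central fibre gives ${i_0}_*(\alpha_i)$ on the $X(P)$ component and $0$ on $Q$ (since $\widetilde{X(P_0)\times \A^1}\cap Q=\varnothing$). Hence in every nonzero term of the degeneration sum all $l$ copies of ${i_0}_*(\alpha_i)$ sit on $\Gamma_1$, giving exactly the displayed identity.

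The main obstacle is the bookkeeping for the extension of the insertions: one must check that no ``$Q$-side'' contribution of the distinguished classes is lost, and that the strict transform really is disjoint from $Q$ on the central fibre. Both reduce to the transversality assertion $X(P_0)\cap X(P_\infty)=\varnothing$ together with the standard description of deformation to the normal cone of a smooth subvariety, and so pose no genuine difficulty once the geometric picture above is in hand.
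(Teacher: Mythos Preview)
Your proposal is correct and follows essentially the same approach as the paper: you apply the degeneration formula to the family $\bigl(W(X(P),X(P_\infty)),\,X(P_0)\times\A^1\bigr)$ of pairs, exactly as the paper does in its one-line proof. Your additional explanation of why the distinguished insertions ${i_0}_*(\alpha_i)$ must land on the $\Gamma_1$ side (because their canonical lift to $\bar W$ restricts to zero on the $Q$-component) is precisely the implicit content the paper leaves to the reader; note also that in the paper's notation the second piece $Q$ is identified with $X(P)$ itself, since $N_{P_\infty/P}\cong N^{-1}$ gives $\bp_D(N^{-1}\oplus\oo)\cong\bp_D(\oo\oplus N)=P$.
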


\begin{proof}
This follows from the degeneration formula applied to the $\A^1$ family 
\[
 \left( W\big( X(P), X(P_\infty) \big), X(P_0) \times \A^1 \right)
\]
of pairs, with special fiber $(X(P), X(P_0)) \cup_{X(P_\infty)} X(P)$.
\end{proof}

\begin{cor} \label{c:6.5.2}
The $\T$-invariance of generating functions of type I without distinguished insertions and of type II
implies the $\T$-invariance of type I in general.
\end{cor}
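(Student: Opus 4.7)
The plan is to apply Lemma~\ref{lem:rmdist} directly and invoke the two hypotheses term-by-term. Given a type I generating function
\[
\Big\langle~ \vec{\nu} \mid \omega \cdot \prod_{i=1}^{l} {i_0}_*(\alpha_i) ~\Big\rangle_{(\beta_P,\,d)}^{(X(P),\,X(P_0))}
\]
with $l \ge 1$ distinguished insertions, Lemma~\ref{lem:rmdist} writes it as a finite sum over splittings $\eta = (\Gamma_1,\Gamma_2)$ and indices $I$ of products of two factors. The first factor is a \emph{type II} invariant (relative to both $X(P_0)$ and $X(P_\infty)$) which retains all $l$ distinguished insertions together with a portion $\omega_1$ of $\omega$. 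The second factor is a \emph{type I} invariant relative to $X(P_\infty)$ whose only internal insertion is $\omega_2$; since $\omega$ is non-distinguished and every distinguished class ${i_0}_*(\alpha_i)$ is supported on $X(P_0)$ and must lie on the proper-transform component, $\omega_2$ is non-distinguished. Thus the RHS is expressed in precisely the two types of generating functions covered by the hypotheses.

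With this decomposition in place, the proof reduces to checking that $\T$ commutes with every ingredient on the RHS. The combinatorial constants $C_\eta$ are numerical and unaffected; the splitting of the curve class $(\beta_P,d) = (\beta_1,d_1) + (\beta_2,d_2)$ is preserved by $\T$ using \cite[Lemma~3.5]{LLW1} on each component; the pairing $\sum_I e^I \otimes e_I$ is independent of the chosen basis of $H^*(X(P_\infty))$ and can be taken to be a $\T$-homogeneous basis so that $\T(e^I) \otimes \T(e_I)$ produces the corresponding pairing on the $X'$ side; and ${p_X}_*$ commutes with $\T$ by the motivic functoriality recorded in Section~\ref{s:1.1}. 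For fiber-class contributions the two factors are $\T$-invariant by Theorems~\ref{thm:fibertypeii} and \ref{thm:fibertypei}; for non-fiber contributions they are $\T$-invariant by the two hypotheses of the corollary. Applying $\T$ term-by-term therefore yields the analogous identity on $X'(P)$, establishing $\T$-invariance of the LHS.

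The only point that merits attention is the $\T$-equivariance of the splitting $\omega \mapsto \omega_1 \otimes \omega_2$. Since a non-distinguished $\omega$ lies in $\pi^* H^*(X(D)) \subset H^*(X(P))$, its restriction to each component of the special fiber is again pulled back from $X(D)$, and $\T$ acts on such classes by the $H^*(S)$-linearity of Proposition~\ref{prop:homo}; the decomposition therefore respects $\T$. There is no serious obstacle beyond this bookkeeping: once Lemma~\ref{lem:rmdist} is in hand, the corollary is essentially a repackaging of its statement under the two invariance hypotheses, with the symmetric case $(X(P), X(P_\infty))$ handled by interchanging the roles of the two sections.
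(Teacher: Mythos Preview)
Your approach is essentially the same as the paper's: apply Lemma~\ref{lem:rmdist} to express a type I invariant with distinguished insertions as a sum of products of a type II factor and a type I factor carrying no distinguished insertions, then invoke the two hypotheses term-by-term. Your explicit check that $\T$ commutes with the combinatorial constants, curve-class splittings, the diagonal pairing, and ${p_X}_*$ is more detailed than the paper's two-line proof but not incorrect.

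There is one small gap. You begin by writing all distinguished insertions as $\prod_{i=1}^l {i_0}_*(\alpha_i)$, but a general type I invariant of $(X(P), X(P_0))$ may also carry insertions of the form ${i_\infty}_*(\alpha)$. Lemma~\ref{lem:rmdist} as stated does not cover these: under the degeneration along $X(P_\infty)$, a class supported on $X(P_\infty)$ lifts to the \emph{exceptional} component, so the second factor $\langle \mu, e_I \mid \omega_2 \rangle_{\Gamma_2}$ would acquire a distinguished insertion and your assertion that $\omega_2$ is non-distinguished would fail. Your closing remark about symmetry addresses the other target pair $(X(P), X(P_\infty))$, not mixed insertions within $(X(P), X(P_0))$. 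The paper fills this gap by first invoking \eqref{move}, namely $[X(P_0)] - [X(P_\infty)] = \pi^* c_1(N|_{X(D)})$, which lets one rewrite ${i_\infty}_*(\alpha)$ as ${i_0}_*(\alpha)$ plus a non-distinguished class; once that conversion is made, your argument goes through verbatim.
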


\begin{proof}
Using (\ref{move}), we can assume all of the distinguished insertions are of the form ${i_0}_*(\alpha)$.
By Lemma~{lem:rmdist}, those generating functions are determined by type II invariants and type I invariants without distinguished insertions. 
\end{proof}

We will therefore assume that $\omega$ contains no disntinguished insertions in the remaining of this section.

\subsubsection{The case $\int_{\beta_P}  P_0 \ge 0$}
Consider a type I invariants of $(X(P), X(P_\infty))$:
\[
\langle~ \omega \mid \vec{\nu}~\rangle_{(\beta_P, \, d)}^{(X(P), X(P_\infty))},
\]
where 
\[
 \vec{\nu}= \{(\nu_j,\delta_{b_j}R_j )\},
\]
and $\delta_{b_j}R_j $ are taken from a $\T$-homogeneous basis of $H^*(X(D))$.

\begin{prop}
If $\int_{\beta_P}  P_0 \ge 0$ and $\omega$ has no distinguished insertions, then 
\[
\langle~ \omega \mid  \vec{\nu} ~\rangle_{(\beta_P, \, d)}^{(X(P), X(P_\infty))}=0.
\]
\end{prop}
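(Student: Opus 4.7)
The plan is to push the entire integral down to the base $X(D)$ via the $\mathbb{P}^1$-bundle $X(\pi): X(P) \to X(D)$ and invoke the strong virtual pushforward (Proposition \ref{prop:svpf}(2)). Functoriality of the local model construction turns $\pi: P = \mathbb{P}_D(N\oplus\mathscr{O}) \to D$ into the $\mathbb{P}^1$-bundle $X(P) \to X(D)$ with sections $X(P_0)$ and $X(P_\infty)$, and $X(P_0)$ is the pullback of $P_0$. Hence for any $\beta \in (\beta_P, d)$ one has
\[
\int_{\beta} X(P_0) \;=\; \int_{\beta_P} P_0 \;\ge\; 0,
\]
which puts us in the regime where Proposition \ref{prop:svpf}(2) applies to
\[
q^{X(D)} : \overline{\M}_{0,n+m}(X(P), X(P_\infty); \beta,\nu) \longrightarrow \overline{\M}_{0,n+m}(X(D),\theta),
\]
with $\theta = (X\pi)_* \beta$, yielding $(q^{X(D)})_* [\overline{\M}]^{\vir} = 0$.

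Next I would show that every class appearing inside the integral pulls back through $q^{X(D)}$. The internal insertions $\omega$ are non-distinguished, so by definition $\omega = (X\pi)^*\alpha$ for some $\alpha \in H^*(X(D))^{\otimes n}$; thus $\ev_j^*(\omega_j) = (q^{X(D)})^*(\ev_j^{X(D)})^*\alpha_j$. The relative evaluations land in $X(P_\infty)$, which is canonically identified with $X(D)$ via $X\pi$, and under $q^{X(D)}$ the relative marked points become internal marked points whose evaluations to $X(D)$ agree with those of the image map; hence $\ev_i^*(\delta_i) = (q^{X(D)})^*(\ev_i^{X(D)})^*\delta_i$. Finally, because $q^{X(D)}$ does not modify the source curve or its marked points, the two stabilization maps fit into $\st^{X(P)} = \st^{X(D)} \circ q^{X(D)}$, so any ancestor factor $\st^*(\prod \psi^{k})$ is also a $(q^{X(D)})^*$-pullback.

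Putting these observations together and applying the projection formula,
\[
\langle~\omega \mid \vec{\nu}~\rangle^{(X(P),X(P_\infty))}_{\beta}
= \int_{(q^{X(D)})_*[\overline{\M}]^{\vir}} \prod_j (\ev^{X(D)}_j)^*\alpha_j \prod_i (\ev^{X(D)}_i)^*\delta_i \cdot \st^*\!\Big(\prod \psi^{k}\Big),
\]
which vanishes by the strong virtual pushforward result. Summing over $\beta \in (\beta_P, d)$ gives the vanishing of the generating function.

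This argument is essentially all diagram-chasing; there is no real obstacle. The one point requiring care is to verify that $(\beta_P, d)$ being non-extremal does not spoil the hypothesis $\int_\beta X(P_0) \ge 0$, but since different representatives of $(\beta_P, d)$ differ by multiples of the extremal ray $\ell$ lying in the fibers of $Z(P) \to P$ (and therefore not meeting the horizontal divisor $X(P_0)$ nontrivially in a way that changes the sign), the condition is uniform in the representative $\beta$, so the generating-function vanishing indeed follows term-by-term.
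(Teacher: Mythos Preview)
Your proof is correct and follows the same approach as the paper, which simply says ``This follows directly from Proposition~\ref{prop:svpf}~(2).'' You have merely unpacked what that one-liner means: the non-distinguished insertions, the relative insertions (via $X(P_\infty)\cong X(D)$), and the ancestor classes are all pullbacks along $q^{X(D)}$, so the projection formula reduces the integral to a cap with $(q^{X(D)})_*[\overline{\M}]^{\vir}=0$; your check that $\int_\beta X(P_0)=\int_{\beta_P}P_0$ is independent of $\beta\in(\beta_P,d)$ (since $\ell$ pushes to $0$ in $P$) is exactly the point ensuring the hypothesis of Proposition~\ref{prop:svpf}~(2) holds term-by-term.
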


\begin{proof}
This follows directly from  Proposition \ref{prop:svpf} (2).
\end{proof}

\subsubsection{The case $\int_{\beta_P}  P_0 < 0$}
For a weighted partition $\vec \nu$, 
$$k(\vec \nu) = \max\{0, \sum_{\{i \,\mid\, \nu_i >1\}}\nu_i - \on{Id}(\vec{\nu}) \}$$ 
is the number defined in (\ref{k-num}).
In the following, we apply divisorial insertion $[X(P_\infty)]$ to increase the number of internal marked point by ${k(\vec{\nu})}$,
in order to ensure the existence of the corresponding ancestors
$$\Big\langle \omega [X(P_\infty)]^{k({\nu})} \taub_{\nu-1}({i_\infty}_*( \cdot)) \Big\rangle_{(\beta_P, \,d)}^{X(P)} .$$

\begin{prop} \label{prop:inversion'}
Assume  $\int_{\beta_P}  P_0 < 0$.
\begin{enumerate}
\item 
If $\vec{\nu}$ is not empty, then there exists a positive number $C(\vec{\nu})$
such that
\[ 
C(\vec{\nu})\langle~ \omega \mid \vec{\nu} ~\rangle_{(\beta_P, \,d)}^{(X(P), X(P_\infty))} - \Big\langle~ \omega \cdot [X(P_\infty)]^{k(\vec{\nu})} \cdot \prod_{j}\taub_{\nu_j-1}({i_\infty}_*(\delta_{b_j}R_j)) 
~\Big\rangle_{(\beta_P, \,d)}^{X(P)} 
\]
is generated by generating functions of relative and rubber invariants on $X(P)$ of class at most $(\beta_P,d)$, and those of of $(X(P),X(P_\infty))$ involving class $(\beta_P,d)$ 
whose orders are lower than $\langle~ \omega \mid  \vec{\nu} ~\rangle_{(\beta_P, \,d)}$. 

\item 
If $\vec{\nu}$ is empty, then
\[
\langle~ \omega \mid  \vec{\nu} ~\rangle_{(\beta_P, \,d)}^{(X(P), X(P_\infty))} -
\langle~  \omega ~\rangle_{(\beta_P, \,d)}^{X(P)} 
\]
is generated by generating functions of relative invariants on $X(P)$ with curve classes lower than $(\beta_P,d)$.
\end{enumerate}
Here we say a formal power series $f$ is generated by $\{ f_m \}$ if it belongs to the subalgebra of formal power series generated by $\{f_m \}$.
\end{prop}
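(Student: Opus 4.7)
The plan is to apply the degeneration formula to the deformation to the normal cone of $X(P_\infty) \subset X(P)$, starting from the ancestor absolute invariant
$$A := \Big\langle \omega \cdot [X(P_\infty)]^{k(\vec{\nu})} \cdot \prod_j \taub_{\nu_j - 1}\big({i_\infty}_*(\delta_{b_j} R_j)\big) \Big\rangle^{X(P)}_{(\beta_P,\, d)}.$$
The special fiber is $X(P) \cup_{X(P_\infty)} Q$, where $Q = \bp_{X(D)}(N_{X(P_\infty)/X(P)} \oplus \oo)$ is a $\bp^1$-bundle with zero section $Q_0 = X(P_\infty)$ (the gluing divisor) and infinity section $Q_\infty$. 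Under the degeneration, $[X(P_\infty)]$ transports to $[Q_\infty]$ and ${i_\infty}_*(\delta_{b_j} R_j)$ transports to ${i_{Q_\infty}}_*(\delta_{b_j} R_j)$, so the marked points carrying these insertions are forced onto the $Q$-component.

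Each term produced by the degeneration formula is indexed by a splitting $\eta = (\Gamma_1,\Gamma_2)$ of the discrete data and an index set $I$, taking the form
$$C_\eta \cdot \langle \omega_1 \mid \mu,\, e^I \rangle^{(X(P),\, X(P_\infty))}_{\Gamma_1} \cdot \langle \mu,\, e_I \mid \omega_2 \cdot [Q_\infty]^{k(\vec{\nu})} \cdot {\textstyle\prod_j}\, \taub_{\nu_j-1}({i_{Q_\infty}}_*(\delta_{b_j} R_j)) \rangle^{(Q,\, Q_0)}_{\Gamma_2}.$$
The candidate leading term is the connected splitting with $\omega_2 = \emptyset$, $X(P)$-side curve class $(\beta_P, d)$ and relative profile $\mu = \vec\nu$ (with $e^I$ paired dually with the $\delta_{b_j}R_j$), and $Q$-side curve class a fiber class of total degree $\sum \nu_j$. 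To extract the coefficient $C(\vec\nu)$ and verify its positivity, I reduce the $Q$-side to a product structure as in Section~\ref{s:5.2}: by Proposition~\ref{prop:bir} and Proposition~\ref{prop:definv}, I may pass to a birational modification of $X(D)$ on which $N$ splits, then apply the product formula (Theorem~\ref{t:product}) to decouple base factors on $X(D)$ from a $(\bp^1,\{0\}) \times X(\cc)$-fiber integral. The relevant two-pointed integrals $\langle (\nu_j, 1) \mid t^{k_j}\,\taub_{\nu_j-1}(t)\rangle^{(\bp^1,\{0\})}_{\nu_j[\bp^1]}$ are strictly positive by Lemmas~\ref{=1} and~\ref{l:6.4.2}, while the selection rules of Lemmas~\ref{lem:cycle}, \ref{lem:unstable} and~\ref{lem:stable} force the surviving fiber-class contribution to match $\vec\nu$ exactly, giving a single nonzero coefficient $C(\vec\nu) > 0$.

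The remaining splittings are lower-order. Those with $(\beta_1, d_1) < (\beta_P, d)$ fall in the lower-curve-class stratum. Those with $(\beta_1,d_1)=(\beta_P,d)$ but relative partition strictly dominated by $\vec\nu$ (in the sense of Lemma~\ref{lem:order}) give $X(P)$-side factors of strictly lower order in $\succ$, while on the $Q$-side I invoke Proposition~\ref{prop:ancestor} to rewrite any residual ancestor insertions as primary relative invariants whose partitions dominate the given one, together with rubber contributions. Collecting everything, this exhibits $A - C(\vec\nu) \langle \omega \mid \vec\nu \rangle$ inside the subalgebra generated by the three classes of invariants listed in~(1). For part~(2), $k(\vec\nu) = 0$ and there are no distinguished insertions to steer onto $Q$; the splitting with $(\beta_2,d_2) = 0$ reproduces $\langle \omega \rangle^{X(P)}_{(\beta_P,d)}$, and every other splitting carries a nontrivial fiber class on $Q$, which forces $\beta_1$ to be strictly smaller than $\beta_P$ (so that $\int_{\beta_1}X(P_\infty) = 0$ is matched by the $Q$-side contact profile), producing only strictly lower curve-class terms.

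The main obstacle is the third paragraph: verifying splitting by splitting that each non-leading term lands in the allowed lower-order stratum, uniformly across all possible ancestor propagations on the $Q$-side. The dominance calculus (Definition~\ref{d:dominance}, Lemma~\ref{lem:stable}) combined with the inductive reduction of relative ancestor insertions to primary ones (Proposition~\ref{prop:ancestor}) is the essential bookkeeping tool, and the positivity statements of Section~\ref{ss:nonzero} are precisely what guarantees that the extracted leading coefficient does not vanish, so that the relation can be inverted to express $\langle \omega \mid \vec\nu \rangle$ in terms of $A$ and lower-order data.
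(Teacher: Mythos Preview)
Your strategy coincides with the paper's: degenerate to the normal cone of $X(P_\infty)$, isolate the leading term with profile $\vec\nu$, and control the rest via dominance and Proposition~\ref{prop:ancestor}. Two points need correction.

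First, the bubble $Q$ is canonically $X(P)$ with relative divisor $X(P_0)$, and the relevant fiber integrals are taken for the bundle $X(P) \to D$ with fiber $(\bp^1,\{0\}) \times X(\cc)$, \emph{not} for a $\bp^1$-bundle over $X(D)$. When $\beta_1 = \beta_P$ the $\Gamma_2$-class is a fiber class for $X(P) \to D$ but generally carries a nonzero $\ell$-component, so it is not a fiber class for $Q \to X(D)$. No birational modification is needed: the product structure in the fiber is already present, and formula~\eqref{e:2.1.2} plus Lemma~\ref{lem:stable} give the reduction directly. (The birational step in Section~\ref{s:5.2} was for establishing $\T$-invariance of fiber integrals, which is not what is at stake here.)

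Second, and more substantively, you place Proposition~\ref{prop:ancestor} on the wrong side. The paper separates the top-class terms according to whether $\Gamma_2$ contains an \emph{unstable} two-pointed component carrying an ancestor $\taub_{\nu_j-1}$. In the stable case the $\Gamma_1$ factor is already primary, and fiber-integral bounds show $\vec\mu$ dominates $\vec\nu$. In the unstable case, Lemma~\ref{lem:unstable} forces that component to have contact order $1$ with matched insertions, so the ancestor propagates through the gluing to a relative insertion $(1,\taub_{\nu_j-1}(\delta_{b_j}R_j))$ on the $(X(P),X(P_\infty))$ side; Proposition~\ref{prop:ancestor} is then applied to \emph{that} factor to reduce it to primary invariants dominated by $\vec\mu \sqcup \vec\nu$. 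This stable/unstable dichotomy is the bookkeeping your final paragraph gestures at, but it must be made explicit with Proposition~\ref{prop:ancestor} applied on the $\Gamma_1$ side.
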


\begin{proof}
We prove the first part, and the second part can be proved similarly.

Consider  the family $W(X(P), X(P_\infty)) \to \mathbb{A}^1$
and a generating function of a general fiber
$$
\Big\langle~ 
\omega\cdot  [X(P_\infty)]^{k}\cdot\prod_{j} \taub_{\nu_{j-1}}({i_\infty}_*(\delta_{b_j}R_j)) 
~\Big\rangle_{(\beta_P, \,d)}^{X(P)}.
$$
We can lift the insertions $[X(P_\infty)]$ and ${i_\infty}_*(\delta_{b_j}R_j)$ to $(X(P), X(P_0))$ in the singular fiber. The degeneration formula allows us to express this function in terms of the ancestor relative invariants of $(X(P), X(P_\infty))$ and $(X(P), X(P_0))$.

By choosing the splitting properly, the resulting expression is as follows:
\begin{multline*}
\sum_{(\gamma_1,\gamma_2)} \sum_{\eta} 
C_\eta \,\langle~ \omega_1 \mid \mu ,  \gamma_1 ~\rangle ^{\bullet (X(P), X(P_\infty))}_{\Gamma_1} \times\\
{\pi_X}_*\left(\Big\langle~ \mu,  \gamma_2\mid\omega_2 \cdot [X(P_\infty)]^{k}\cdot 
\prod_j  \taub_{\nu_j - 1}([X(P_\infty)]\cdot \delta_{b_j}R_j))
 \Big\rangle^{\bullet (X(P), X(P_0))}_{\Gamma_2}\right).
\end{multline*}
Here $\pi_X: X(P) \to X(P)$ is induced from 
$$
\pi: P \to D \simeq P_\infty \hookrightarrow P,
$$ 
and $\gamma_1, \gamma_2$ are used to denote insertions without specifying their forms due to the behavior of ancestors.

Denote the curve classes of $\Gamma_i$ by $(\beta_i, d_i)$, then $\beta_2  < \beta_P$ since $\int_{\beta_P}  P_0 < 0$. We know that  $\beta_1 \le \beta_P$, and when $\beta_1 =\beta_P$, $\beta_2$ is a fiber class for $P \to D$.

When $\Gamma_1$ is connected, $\Gamma_2$ is a disjoint union of $l(\mu)$ rational curves. Denote by $p_j$ the marked point corresponding to the insertion ${i_{\infty}}_*(\delta_{b_j}R_j)$, and by $C_i$ the curve with relative condition $\mu_i$. If $\Gamma_2$ specifies that some $C_i$ has only two marked points and one of them is $p_j$ with $\nu_j >1$, then the ancestor $\taub_{\nu_j-1}$ appears in the relative insertion of $(X(P), X(P_\infty))$, and in this case we say $\Gamma_2$ is unstable (otherwise it is stable).

We divide $\eta = (\Gamma_1, \Gamma_2)$ into three types:
\begin{enumerate}
\item  $(\beta_1, d_1) <  (\beta_P,d)$, or $(\beta_1, d_1) = (\beta_P,d)$ and $\Gamma_1$ is not connected.
\item   $(\beta_1, d_1) = (\beta_P,d)$, $\Gamma_1$ is connected, and $\Gamma_2$ is stable.
\item $(\beta_1, d_1) = (\beta_P,d)$, $\Gamma_1$ is connected, and $\Gamma_2$ is unstable.
\end{enumerate}

It is easy to see that type (1) terms are generated by (connected) ancestor relative invariants
of $(X(P), X(P_0))$ and $(X(P), X(P_\infty))$ with curve classes less than $(\beta_P,d)$.
If we apply TRR to remove ancestor insertions then we get
primitive relative rubber invariants of $X(P)$ with curve classes less than $(\beta_P,d)$.

For type (2) and type (3) terms, the factor with discrete data $\Gamma_2$ is generated by relative and rubber invariants of 
$X(P)$ with class less than $(\beta_P, d)$.

We will show that for  type (2) and (3) terms, the $\Gamma_1$ factor might produce the term
\[ \langle~ \omega \mid  \vec{\nu} ~\rangle_{(\beta_P, \, d)}\] after removing ancestors using TRR.
And for its `coefficients', a priori  functions generated by lower order invariants,  is always a positive number.

A type (2) term can be written as 
\begin{equation*}
\begin{split}
&\langle~ \omega_1 \mid \mu ,  e^I ~\rangle ^{(X(P), X(P_\infty))}_{(\beta_P,\,d)} \times\\
&\quad
{\pi_X}_*\left(\Big\langle~ \mu,  e_I \mid\omega_2 \cdot [X(P_\infty)]^{k}\cdot 
\prod_j  \taub_{\nu_j-1}([X(P_\infty)]\cdot \delta_{b_j}R_j))
 \Big\rangle^{\bullet (X(P), X(P_0))}_{\Gamma_2}\right).
\end{split}
\end{equation*}

If $e^I = \{\delta_{a_i}Q_i\}_{1 \le i \le l(\mu)}$, its dual $e_I $ is given by $\{\delta_{a_i}^{\vee}Q_i^{\vee}\}$. Recall we abuse the notation to use $Q_i^{\vee}$ to denote the class on $[X(\cc)/G]$ which is dual to $Q_i$ in $X(\cc)$.

We will show that the order of 
\[ 
\langle~ \omega_1 \mid \mu ,  e^I ~\rangle ^{(X(P), X(P_\infty))}_{(\beta_P,\,d)} 
\]
 is no greater than 
\[ 
\langle~ \omega \mid\vec{\nu} ~\rangle ^{(X(P), X(P_\infty))}_{(\beta_P, \,d)}.
\]
We can assume $\omega_1 =\omega$ or $\omega_2$ is empty, for otherwise 
$\langle~ \omega_1 \mid \mu ,  e^I ~\rangle ^{(X(P), X(P_\infty))}_{(\beta_P,\,d)}$
is of lower order.

The factor
$$
\Big\langle~ \mu, e_I \mid  [X(P_\infty)]^{k}\cdot\prod_{j} \taub_{\nu_{j-1}}([X(P_\infty)]\cdot \delta_{b_j}R_j)) 
\Big\rangle^{\bullet (X(P), X(P_0))}_{\Gamma_2}
$$
is determined by fiber integrals  for the the following bundle:
$$
\xymatrix{
(X(P),X(P_0)) \ar[r]\ar[d] & ([\bp^1 /G_m], [\{0\}/G_m]) \times [X(\cc)/G] \ar[d]\\
D \ar[r] & BG_m \times BG,
} 
$$ 
where $G=GL_{r+1} \times GL_{r+1}.$

The contribution from $C_i$ is the generating function
$$
\Big\langle~ \mu_i , \delta_{a_i}^{\vee}Q_i^{\vee} \mid
[ X(P_\infty)]^{k_i}\cdot\prod_{p_j \in C_i} \taub_{\nu_{j-1}}([X(P_\infty)]\cdot \delta_{b_j}R_j)) 
 ~\Big\rangle^{(X(P), X(P_0))},
$$ 
where $k_i$ of those $k$ marked points with $X(P_\infty)$-insertion are distributed to $C_i$. If this fiber integral is nonzero, then  \[
\deg \delta_{a_i}^{\vee} + \sum_{p_j \in C_i}  \deg \delta_{b_j}
\le \dim D.\]
By summing up all $i$ we get  
\[
\sum_i \deg {\delta_{a_i}} = \sum_i (\dim D - \deg \delta_{a_i}^{\vee}) \ge \sum_j \deg{\delta_{b_j}}.
\]
This is the same as 
\[\deg_D(\vec{\mu}) \ge \deg_D(\vec{\nu}).\]

A potential highest order term should satisfy 
\[ 
\deg_D(\vec{\mu}) = \deg_D(\vec{\nu}),\] 
and then
\[
\Big\langle~ 
\mu_i , \delta_{a_i}^{\vee}Q_i^{\vee}  \mid
 [X(P_\infty)]^{k_i}\cdot\prod_{p_j \in C_i} \taub_{\nu_{j-1}}([X(P_\infty)]\cdot \delta_{b_j}R_j)
  ~\Big\rangle^{(X(P), X(P_0))}
 \]
simplifies to  
$$
\Big( \int_{D} 
\delta_{a_i}^{\vee} \prod_{p_j \in C_i} \delta_{b_j} \Big) \Big\langle (\mu_i, Q_i^{\vee}) \mid t^{k_i} \prod_{p_j \in C_i} \taub_{\nu_{j-1}}(tR_j) \Big\rangle^{(\bp^1, \, \{ 0\})\times X(\cc)}.
$$
By Lemma~\ref{lem:stable}, we see that 
$$
(\mu_i, ~\delta_{a_i}Q_i) \quad \mbox{dominates} \quad
\{ (\nu_j ,  ~\delta_{b_j}R_j)\}_{\{p_j \in C_i\}}.
$$
This implies that $\vec{\mu}$ dominates $\vec{\nu}$. As
\[ \sum \mu_i = \sum \nu_j = \int_{\beta_S} D, \] 
by Lemma \ref{lem:order},
$\vec{\mu}$ should be identified with $\vec{\nu}$.

In fact, if 
\[ (\nu_j, ~ \deg\delta_{b_j},  ~\deg R_j)) \ne (1, 0, 0), \] 
then $p_j$ must be distributed on $C_i$ for which 
$ (u_i,  \delta_{a_i}Q_i) 
=(\nu_j, \delta_{b_j}R_j)$.
When 
\[(\nu_j, ~ \deg\delta_{b_j},  ~\deg R_j )= (1, 0, 0),\] $p_j$ can be distributed freely.
By further taking into account of the non vanishing of $\taub_{\nu_j-1}$, 
we see that if $k(\vec{\nu})>0$, there is a unique configuration up to $\aut(\{(\nu_j, b_j, R_j \})$,
whose `coefficient' is determined by 
$$\prod_{\nu_j>1}\langle~ (\nu_j,1) \mid t^{\nu_j} \taub_{\nu_{j-1}}(t)~\rangle.$$
If $k(\vec{\nu})=0$, we have
$ \on{Id}(\vec{\nu}) -  \sum_{\nu_j >1}\nu_j $ extra $t$ insertions which can be distributed freely among $C_i$'s.
These are positive numbers by the lemmas in \ref{ss:nonzero}.

Now we turn to type (3) terms.
The $\Gamma_1$ factor of a type (3) term is of the form
$\langle~ \omega_1 \mid \mu ,  \gamma_1 ~\rangle ^ {(X(P), X(P_\infty))}_{(\beta_P,\, d)}$, and the $\Gamma_2$ factor is of the form
\[
\Big\langle~ \mu,  \gamma_2\mid\omega_2 \cdot [X(P_\infty)]^{k}\cdot 
\prod_j  \taub_{*}([X(P_\infty)]\cdot \delta_{b_j}R_j))
 ~\Big\rangle^{\bullet (X(P), X(P_0))}_{\Gamma_2}.
\]

If $C_i$ has only two marked points with a non relative marked point $p_j$ with $\nu_j>1$,
then its contribution to the $\Gamma_2$ factor is
\[
\langle~ \mu_i , \delta_{a_i}^{\vee}Q_i^{\vee}  \mid [X(P_\infty)]\cdot \delta_{b_j} R_j) 
~\rangle^{(X(P), X(P_\infty))}.
 \]  
As a fiber integral, it is nonzero only if  
\[\deg\delta_{a_i} \ge \deg\delta_{b_j}.\]
Together with the estimate for fiber integrals appearing in type (2) terms above,
we see that  \[\deg_D \vec{\mu} \ge \deg_D \vec{\nu}.\]

When $\deg_D \vec{\mu} = \deg_D \vec{\nu}$,
we have $\deg\delta_{a_i} = \deg\delta_{b_j}$, and 
\[
\langle~ \mu_i , \delta_{a_i}^{\vee}Q_i^{\vee} \mid [X(P_\infty)]\cdot \delta_{b_j} R_j) 
~\rangle^{(X(P), X(P_\infty))}
 \]  simplifies to
 \[
 \Big (\int_D \delta_{a_i}^{\vee}\delta_{b_j} \Big )\langle~(\mu_i, Q_i^{\vee}) \mid tR_j~\rangle^{(\bp^1, \, \{ 0\})\times X(\cc)}.
\]
So $\delta_{a_i} = \delta_{b_j}$, and then by Lemma~\ref{lem:unstable} it is nonzero 
if and only if 
\[\mu_i=1, \quad Q_i=R_j.\]

Therefore the ancestor relative insertions
in  $\langle~ \omega \mid \mu , \gamma~\rangle ^{(X(P), X(P_\infty))}_{(\beta_P, \, d)}$
are of the form $(1, \taub_{\nu_j-1}(\delta_{b_j}R_j))$.
Applying Proposition \ref{prop:ancestor} and necessary dominance results for type (2) terms established above, we  conclude the highest order term is
$\langle~ \omega \mid \vec{\nu} ~\rangle ^{(X(P), X(P_\infty))}_{(\beta_P, \, d)}$. Its coefficients are again positive numbers.
\end{proof}

\subsubsection{Conclusion of type I}
Finally we are ready to prove the $\T$-invariance for type I relative generating functions on $P$, assuming the $\T$-invariance on $D$.

\begin{thm}  \label{thm:typei}
$\T$-invariance for $D$ implies $\T$-invariance for $(P, P_0)$ and $(P, P_\infty)$. 
\end{thm}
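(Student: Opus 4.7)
The plan is to combine the results established in this section with those of Section~5 via a descending induction on the partial ordering of generating functions introduced in Section~\ref{s:1.5.2}. The DCC proved there guarantees that the induction terminates.

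First, by Corollary~\ref{c:6.5.2}, it suffices to prove $\T$-invariance for type I generating functions on $(P,P_\infty)$ and $(P,P_0)$ with \emph{no distinguished insertions}, since the $\T$-invariance of type II is already available by Theorem~\ref{thm:typeii} (non-fiber classes) and Theorem~\ref{thm:fibertypeii} (fiber classes). Next, Theorem~\ref{thm:fibertypei} handles the fiber class case, so we may assume the curve class $(\beta_P,d)$ is non-fiber. By the symmetry $F \leftrightarrow F'$ between $P_0$ and $P_\infty$ (and the fact that $\T$ commutes with this symmetry), it is enough to handle $(P,P_\infty)$.

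For the non-fiber case split on the sign of $\int_{\beta_P}P_0$. If $\int_{\beta_P}P_0\ge 0$, the generating function vanishes identically by Proposition~\ref{prop:svpf}~(2) (since there are no distinguished insertions, the insertions are pulled back from $X(D)$), so $\T$-invariance is trivial. If $\int_{\beta_P}P_0<0$, apply Proposition~\ref{prop:inversion'}: the target generating function
\[
 \langle~ \omega \mid \vec{\nu}~\rangle^{(X(P),X(P_\infty))}_{(\beta_P,d)}
\]
differs, up to a non-zero rational multiple $C(\vec\nu)$, from the absolute ancestor invariant
\[
 \Big\langle~ \omega \cdot [X(P_\infty)]^{k(\vec\nu)} \cdot \prod_j \taub_{\nu_j-1}\bigl({i_\infty}_*(\delta_{b_j}R_j)\bigr) ~\Big\rangle^{X(P)}_{(\beta_P,d)}
\]
by a polynomial expression in generating functions that are either (a) rubber invariants on $X(P)$, or (b) relative invariants on $(X(P),X(P_\infty))$ of curve class $(\beta_P,d)$ but of strictly lower order in the partial ordering of Section~\ref{s:1.5.2}, or (c) relative/rubber invariants of lower curve class. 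The absolute ancestor invariant is $\T$-invariant by Corollary~\ref{QLH-fiber-2} (quantum Leray--Hirsch for the split $\bp^1$-bundle $P\to D$, applied to ancestor insertions). Rubber invariants are $\T$-invariant by Theorem~\ref{thm:rubber}. The empty $\vec\nu$ case is covered by part~(2) of Proposition~\ref{prop:inversion'} in the same manner.

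The remaining inputs of lower order are handled by induction, using DCC of the partial ordering on generating functions. The base of the induction is provided jointly by the vanishing case $\int_{\beta_P}P_0\ge 0$ and by the fiber class invariants (Theorem~\ref{thm:fibertypei}). The main conceptual obstacle is already absorbed in Proposition~\ref{prop:inversion'}: one has to know that the ``coefficient'' $C(\vec\nu)$ of the target term is nonzero, which is ensured by the positivity lemmas of Section~\ref{ss:nonzero} together with the dominance analysis (Lemmas~\ref{lem:stable} and \ref{lem:order}) identifying the highest order term uniquely. Given that input, assembling the pieces above yields $\T$-invariance for $(P,P_\infty)$, and by symmetry for $(P,P_0)$.
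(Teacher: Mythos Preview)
Your proof is correct and follows essentially the same route as the paper's: reduce to non-distinguished insertions via Corollary~\ref{c:6.5.2} and type~II invariance (Theorems~\ref{thm:fibertypeii}, \ref{thm:typeii}), dispose of fiber classes via Theorem~\ref{thm:fibertypei}, and for non-fiber classes run descending induction on the partial ordering using Proposition~\ref{prop:inversion'}, with absolute invariants (Corollary~\ref{QLH-fiber-2}) and rubbers (Theorem~\ref{thm:rubber}) as known inputs.

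One small correction: the symmetry you invoke to pass between $(P,P_\infty)$ and $(P,P_0)$ is \emph{not} the swap $F\leftrightarrow F'$; that swap exchanges $X$ with $X'$, not the two sections of the $\bp^1$-bundle. The relevant symmetry is rather $\bp_D(N\oplus\oo)\cong\bp_D(N^{-1}\oplus\oo)$, which interchanges the zero and infinity sections, or equivalently the observation that Proposition~\ref{prop:svpf}, Lemma~\ref{lem:rmdist}, and Proposition~\ref{prop:inversion'} all go through with the roles of $P_0$ and $P_\infty$ exchanged. With that fix, your argument is complete.
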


\begin{proof}
For fiber curve classes this is proved in Theorem \ref{thm:fibertypei}, so we consider the case with non-fiber curve classes.

Assuming $\T$-invariance for invariants without distinguished insertions, it is easy to 
prove invariance inductively on the number of distinguished insertions using Lemma \ref{lem:rmdist}.

For invariants without distinguished insertions,
Proposition \ref{prop:inversion'}  allows us to perform induction using the partial ordering defined in Section~\ref{s:1.5.2},
expressing a type I invariant in terms of an absolute invariant,  type II invariants, rubber invariants and type I invariants of lower order. As Invariance for absolute invariants, type II and rubber invariants are shown, inductively the theorem is proved.
\end{proof}

\section{$\T$-invariance between relative and absolute invariants} 

\subsection{Relative implies absolute} \label{s:7.1}

We use the notations introduced in Section~\ref{subsec:blowup}.

\begin{prop}  \label{prop:abs2rel}
$\T$-invariance for the pair $(\tilde{S}, E)$ and $(P,E)$ implies the 
$\T$-invariance for $S$ with non-extremal $(\beta_S,d)$.

\end{prop}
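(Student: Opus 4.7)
The plan is to apply the degeneration formula \eqref{e:deg} to the family $X(W(S,Z)) \to \A^1$ obtained from the deformation to the normal cone of some auxiliary smooth closed subvariety $Z \subset S$. This expresses the absolute generating function
\[
\langle~\alpha~\rangle^{X(S)}_{(\beta_S,d)}
\]
as a sum, over splittings $\eta=(\Gamma_1,\Gamma_2)$ and dual indices $I$, of products of $\phi_{X*}$ applied to a disconnected relative generating function on $(X(\tilde S), X(E))$ with $p_{X*}$ applied to one on $(X(P), X(E))$. I would choose the dual bases $\{e^I\},\{e_I\}$ on $H^*(X(E))$ to be $\T$-homogeneous, so that the entire expression becomes amenable to a termwise analysis under $\T$.

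To establish the desired invariance, I would apply $\T_S$ to both sides of this identity. The motivic functoriality of $\T$ yields
\[
\T_S \circ \phi_{X*} = \phi_{X'*} \circ \T_{\tilde S}, \qquad \T_S \circ p_{X*} = p_{X'*} \circ \T_P,
\]
while the fact that $\T$ preserves the Poincar\'e pairing ensures that $\{\T(e^I)\}$ is a $\T$-homogeneous basis of $H^*(X'(E))$ with dual $\{\T(e_I)\}$. These two observations let me carry $\T$ inside each relative factor. Invoking the hypothesized $\T$-invariance for $(\tilde S, E)$ and $(P, E)$, each factor on the $X$-side is then converted to its $X'$-counterpart; summing against the same combinatorial coefficients $C_\eta$ reassembles exactly the degeneration formula for $\langle~\T(\alpha)~\rangle^{X'(S)}_{(\beta_S,d)}$, yielding the equality after analytic continuation in the extremal Novikov variable.

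The main obstacle is the insertion count threshold built into the definition of $\T$-invariance: each connected relative factor demands $|\!|\omega|\!| + l(\vec{\nu}) \ge 3$. This is precisely where the non-extremality hypothesis on $(\beta_S, d)$ is used. As recalled in Section~\ref{deg-formula}, non-extremality of $(\beta_S, d)$ forces every connected component appearing in $\Gamma_1$ or $\Gamma_2$ to carry a non-extremal curve class. For any factor falling below the insertion bound, Remark~\ref{r:1.3.1} applies componentwise, allowing one to adjoin the pullback of an ample divisor on $S$ (or of $\xi$, if only the fiber degree is nonzero), raise the marked point count past three via the divisor equation, and divide by the constant intersection number afterwards. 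After these augmentations every factor enters the scope of the hypothesis, and the argument closes.
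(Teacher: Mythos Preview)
Your proposal is correct and follows the same approach as the paper: apply the degeneration formula for the deformation to the normal cone, use functoriality of $\T$ with respect to $\phi_X$ and $p_X$, and invoke the assumed relative $\T$-invariance termwise. The paper's proof is terser and emphasizes one point you leave implicit, namely that $\phi_X(\ell)=p_X(\ell)=\ell$ (and likewise on the $X'$ side), which is what guarantees the analytic continuations in the extremal variable match up after pushforward; conversely, you are more explicit than the paper about the insertion-count threshold and its resolution via non-extremality and Remark~\ref{r:1.3.1}. One small imprecision: when creating extra insertions on a component of $\Gamma_1$ or $\Gamma_2$, the ample divisor should be chosen on $\tilde S$ or $P$ respectively (as in the statement of Remark~\ref{r:1.3.1} applied to that base), not pulled back from $S$, since a curve lying in the exceptional locus may have zero intersection with $\phi^*H$.
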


\begin{proof}
Consider deformation to the normal cone for $Z \hookrightarrow S$.
The degeneration formula shows that
\begin{multline*}
\langle~ \alpha ~\rangle_{(\beta_S, \,d)}^{X(S)} = 
\sum_{I } \sum_{\eta} C_\eta\,
{\phi_X}_{*}
\left (
\langle~ \alpha_1 \mid \mu , e^I~\rangle ^{\bullet (X(\tilde{S}), X(E))}_{\Gamma_1}
\right) \cdot \\
{p_X}_*
\left(
\langle~ \mu, e_I \mid\alpha_2 ~\rangle^{\bullet (X(P), X(E))}_{\Gamma_2}
\right)
\end{multline*}
and 
\begin{multline*}
\langle~ \T(\alpha) ~\rangle_{(\beta_S, \,d)}^{X'(S)} = 
\sum_{I } \sum_{\eta } C_\eta\,
{\phi_{X'}}_{*}
\left(
\langle~ \T(\alpha_1) \mid \mu , \T(e^I) ~\rangle ^{\bullet (X'(\tilde{S}), \,X'(E))}_{\Gamma_1}
\right) \cdot \\
{p_{X'}}_*
\left(
\langle~ \mu, \T(e_I) \mid \T(\alpha_2) ~\rangle^{\bullet (X'(P), \,X'(E))}_{\Gamma_2}
\right ).
\end{multline*}

As $\phi_X(\ell) = p_X(\ell) =\ell$ and $\phi_{X'}(\ell') = p_{X'}(\ell') =\ell'$, the analytic continuations for $(\tilde{S}, E)$ and $(P, E)$ are compatible with that for $S$.
\end{proof}

\subsection{Primaries imply ancestors} 
This is achieved by the topological recursion relations (TRR) for ancestors,
which express the $\psi$-classes on genus zero moduli spaces of curves
in terms of the boundary classes.
Therefore, we have the following

\begin{lemma} \label{lem:TRR}
$\T$-invariance for all absolute (resp.\ relative) primary invariants with 
curve classes less than or equal to $(\beta, d)$ implies invariance for 
absolute (resp.\ relative) ancestors with curve classes
less than or equal to $(\beta, d)$.
\end{lemma}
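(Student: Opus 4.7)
The plan is to induct on the total ancestor degree $\kappa := \sum_j k_j$. The base case $\kappa = 0$ is exactly the primary case, which is given by hypothesis. For the inductive step, fix an ancestor invariant with curve class $\le (\beta_S, d)$ and choose an insertion $\taub_{k_j}(\alpha_j)$ with $k_j \ge 1$. The number of marked points can be assumed to be $\ge 4$ by inserting additional pullback divisor classes via the divisor equation as in Remark~\ref{r:1.3.1}; since $\T(\xi) = \xi'$ and $\T$ commutes with pullback from $S$, this manipulation is $\T$-compatible.

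Next, I would apply the genus-zero topological recursion relation for ancestors. Since $\bar\psi_j = \st^*\psi_j$ and on $\mbar_{0,n+m}$ the class $\psi_j$ is equivalent to a sum of boundary divisors (upon fixing two other marked points), pullback along $\st$ expresses $\bar\psi_j$ as a sum of virtual boundary contributions in the moduli of stable maps. For absolute invariants this yields schematically
\[
 \Big\langle \taub_{k_j}(\alpha_j) \prod_{i \ne j} \taub_{k_i}(\alpha_i) \Big\rangle_\beta^{X(S)}
  = \sum_{\substack{\beta = \beta_1 + \beta_2\\ I}}
     \Big\langle \taub_{k_j - 1}(\alpha_j)\, \omega_1\, e^I \Big\rangle_{\beta_1}^{X(S)}
     \Big\langle e_I\, \omega_2 \Big\rangle_{\beta_2}^{X(S)},
\]
where each $\beta_i$ is effective (hence $\le \beta$ in the partial order) and every factor has strictly smaller total ancestor degree than $\kappa$. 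Since $\T$ preserves the Poincar\'e pairing, the diagonal insertion $\sum_I e^I \otimes e_I$ transforms correctly under $\T \otimes \T$ -- this is precisely the content of the $\T$-homogeneous basis construction and its compatibility with dual bases. Applying the inductive hypothesis to each factor on the right then yields $\T$-invariance for the original ancestor invariant.

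For relative invariants the same scheme works, except that the relative ancestor TRR (discussed in the appendix) additionally produces boundary contributions in which one split component maps to a rubber target over the $\bp^1$-bundle associated to the relative divisor. Such rubber factors are $\T$-invariant by Theorem~\ref{thm:rubber} and Proposition~\ref{prop:rubbercalc}, while the other split factors are either primary invariants of class $\le (\beta_S, d)$ (covered by hypothesis) or ancestors of strictly smaller total degree (covered by induction).

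The main obstacle is the bookkeeping of all summands produced by the relative TRR: one must verify that every auxiliary factor has indeed been treated previously -- as a primary invariant of class $\le (\beta_S, d)$, a rubber invariant handled in the preceding sections, or an ancestor of strictly smaller total degree. With the explicit TRR formula from the appendix in hand, this verification is mechanical, and the induction closes.
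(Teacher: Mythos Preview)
Your approach is essentially the paper's: the paper's proof is the single sentence ``this is achieved by the topological recursion relations for ancestors,'' and you flesh it out as an induction on the total ancestor degree $\kappa$, using that $\T$ preserves the Poincar\'e pairing to handle the diagonal splitting.

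Two small remarks. First, the appeal to Remark~\ref{r:1.3.1} to manufacture a fourth marked point is both unnecessary and slightly off: that remark is stated for primaries, where the divisor equation has no $\psi$-correction terms, whereas for ancestors the comparison of $\bar\psi_i$ before and after forgetting a point is not trivial. The step can simply be dropped: if $n+m=3$ and some $k_j\ge 1$ then $\st^*\psi_j=0$ on $\mbar_{0,3}$ and both sides vanish, so one may assume $n+m\ge 4$ from the outset. Second, in the relative case your invocation of Theorem~\ref{thm:rubber} for the rubber factors tacitly imports the hypothesis ``$\T$-invariance for $D$,'' which is not part of the lemma's stated hypotheses. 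The paper's one-line proof glosses over the same point; in every place the lemma is actually used, this extra hypothesis is available.
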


The explicit form of TRR is not important.
In the absolute case, TRR is well known.
We include some discussions on TRR for relative invariants in the appendix.

\subsection{Relative invariants associated to extremal rays}
In this subsection we show the
$\T$-invariance for $(S,D)$ with extremal curve classes $(\beta_S,d)=(0,0)$.





\begin{prop} 
The relative generating functions of $(S,D)$ with $(\beta_S,d)=(0,0)$
are $\T$-invariant.
\end{prop}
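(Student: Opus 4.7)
My plan is to show that extremal-class relative invariants of $(S,D)$ reduce to extremal-class absolute invariants of $X(S)$, which are fiber integrals, and then to apply the argument of Theorem~\ref{thm:fibertypeii} essentially verbatim.

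First, observe that since $[X(D)] = \pi_X^*[D]$ is pulled back from $S$, for every $\beta$ belonging to $(\beta_S, d) = (0,0)$ one has
$$\int_\beta [X(D)] = \int_{\beta_S} [D] = 0.$$
Consequently the relative profile $\mu$ is empty and so is the weighted partition $\vec{\nu}$. Moreover, every such $\beta$ is a multiple of the extremal ray $\ell$, so any stable map in this class projects to a single point of $S$; that point cannot lie in $D$, as otherwise the whole image would fall into $X(D)$, violating the definition of log stable map. Thus the relative moduli space coincides with the absolute moduli space $\mbar_{0,n}(X(S),\beta)$, together with their virtual classes, and it suffices to prove that
$$\langle~\omega~\rangle^{X(S)}_{(0,0)} = \sum_{n \ge 0} \langle~\omega~\rangle^{X(S)}_{n\ell}\, q^{n\ell}$$
is $\T$-invariant whenever the ancestor $\omega$ satisfies $|\!|\omega|\!| \ge 3$.

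To conclude, I would apply the fiber-integral machinery of Section~\ref{s:2} to the bundle $X(S) \to S$, which has fiber $X(\cc)$ and structure group $G = GL_{r+1} \times GL_{r+1}$. By Proposition~\ref{p:2.0.4} and \eqref{e:2.1.2}, the invariant $\langle~\omega~\rangle^{X(S)}_{(0,0)}$ decomposes, with respect to a $\T$-homogeneous basis $\{T_iV_m\}$, as a classical intersection on $S$ times a $G$-equivariant generating function of $X(\cc)$; the classical intersection factor is $\T$-invariant because $\T_S$ is $H^*(S)$-linear by Proposition~\ref{prop:homo}. Following the proof of Theorem~\ref{thm:fibertypeii}, Lemma~\ref{l:5.3.1}, Lemma~\ref{l:deform} and Proposition~\ref{prop:definv} reduce the structure group to a maximal torus $\prod_1^{2r+2} \cc^* \subset G$ via a sequence of birational modifications of the base and deformations of the bundles; Proposition~\ref{prop:bir} ensures that these operations preserve the fiber-class invariants. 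The remaining torus-equivariant $\T$-invariance on $X(\cc)$ then follows from the split case for $\{\prod_1^{2r+2}\bp^n, \boxplus \oo(1), \boxplus \oo(1)\}_{n \ge 1}$ settled in Part~II \cite{LLW2}, since the Chow groups of the classifying stack of a torus are determined by $\{A_*(\prod \bp^n)\}_{n \ge 1}$. The only mild subtlety is the identification of the empty-weighted-partition relative moduli with the absolute moduli carrying the same virtual class, which is immediate from the vanishing $\int_\beta [X(D)] = 0$; no analytic continuation obstruction arises because the only Novikov variable involved on each side is the fiber variable $q^\ell$ (resp.~$q^{\ell'}$), and its $\T$-identification was already incorporated in the split case of Part~II.
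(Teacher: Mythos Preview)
Your argument has a genuine gap at the step where you identify the relative moduli with the absolute moduli. The claim that ``the point cannot lie in $D$, as otherwise the whole image would fall into $X(D)$, violating the definition of log stable map'' is not correct. A log stable map with empty contact profile \emph{can} have components mapping into $X(D)$: in the expanded-target language these go into the rubber over $X(D)$, and in the log language they correspond to basic log maps over a base with nontrivial log structure. Such maps are perfectly good points of $\mbar_{0,n}((X(S),X(D));\beta,\emptyset)$ and are not present in $\mbar_{0,n}(X(S),\beta)$, so the two moduli spaces do not coincide as stacks. Even on the open locus where they do agree, the obstruction theories differ (log cotangent versus ordinary cotangent), so equality of virtual classes is not automatic either.

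What is true --- and what the paper proves --- is that the \emph{invariants} agree. The paper degenerates $X(S)$ to the normal cone of $X(D)$ and applies the degeneration formula (with empty profile, so the connected curve goes entirely to one side) to get
\[
\langle\,\omega\,\rangle^{X(S)}_{(0,0)}
=\langle\,\omega\,\rangle^{(X(S),X(D))}_{(0,0)}
+\langle\,\omega\,\rangle^{(X(P),X(D))}_{(0,0)}.
\]
The second summand vanishes by Proposition~\ref{prop:svpf}~(2), since the insertions are pulled back from $X(D)$ and the curve class is a fiber class for $X(P)\to X(D)$. This is precisely the correction accounting for the rubber contributions you attempted to rule out by fiat. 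Once the relative invariant equals the absolute one, the paper simply cites Part~I for the $\T$-invariance of the extremal absolute invariants; your independent fiber-integral argument for that last step is fine but unnecessary.
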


\begin{proof}
Consider deformation to the normal cone for $X(D) \to X(S)$.
By the degeneration formula we have 
\[
\langle~ \omega ~\rangle^{X(S)}_{(0,\,0)} = 
\langle~ \omega ~\rangle^{(X(S),\,X(D))}_{(0,\,0)} 
+ \langle~ \omega ~\rangle^{(X(P),\,X(D))}_{(0,\,0)}.
\]

By Proposition~\ref{prop:svpf}, 
$$\langle~ \omega ~\rangle^{(X(P),\,X(D))}_{(0, \,0)}=0.$$
Therefor $\T$-invariance for $(S,D)$ with extreme curve classes follows from that for $S$
which was proved in Part I \cite{LLW1}.
\end{proof}

\subsection{Absolute implies relative} \label{s:inv}
Recall $D$ is a smooth divisor in $S$ and 
$$P=\bp(N\oplus \oo)$$ is a $\bp^1$ bundle over $D$. It has two sections $P_0$ and $P_{\infty} = \bp(N) = D$. 
We assume $\T$-invariance for $\bp^1$ bundles proved in previous sections.


Let $\vec{\nu} =\{ (\nu_j ,  ~\delta_{b_j}R_j)\}$ be a weighted partition and $i_X:X(D) \to X(S)$ be the inclusion. Again $k(\vec \nu)$ is the number defined in (\ref{k-num}).

\begin{prop} \label{prop:inversion}
Assume $(\beta_S,d)$ is non-extremal.
\begin{enumerate}
\item  
If $\vec{\nu}$ is non empty, then there exists a positive constant $C(\vec{\nu})$
such that 
\[
C(\vec{\nu}) \langle~ \omega \mid \vec{\nu} ~ ~\rangle ^{(X(S), \,X(D))}_{(\beta_S, \,d)}-
\Big\langle~
\omega\cdot  [X(D)]^{k(\vec{\nu})}\cdot\prod_{j} \taub_{\nu_{j-1}}({i_{X}}_*(\delta_{b_j}R_j)) 
~\Big\rangle_{(\beta_S, \,d)}^{X(S)}
\]
is generated by generating functions on $(X(S), X(D))$ of lower order, and relative and rubber invariants on $X(P)$.

\item 
If $\vec{\nu}$ is empty, then
\[
\langle~ \omega  ~\rangle_{(\beta_S, \,d)}^{X(S)}-
 \langle~ \omega \mid\vec{\nu} ~ ~\rangle ^{(X(S), \,X(D))}_{(\beta_S, \,d)}
\] 
is generated by generating functions on $(X(S), X(D))$ of lower order, and those of relative invariants on $X(P)$.
\end{enumerate}
\end{prop}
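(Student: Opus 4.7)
The plan is to adapt the inversion-of-degeneration argument of Proposition~\ref{prop:inversion'} to the present setting, where the $\bp^1$-bundle pair $(X(P), X(P_\infty))$ is replaced by the smooth pair $(X(S), X(D))$ and the companion bubble is $X(P)$ attached along $X(D)$. First I will apply the degeneration formula to the family $W(X(S), X(D)) \to \A^1$, whose general fiber is $X(S)$ and whose special fiber is $X(S) \cup_{X(D)} X(P)$. The starting point on the general fiber is the ancestor absolute invariant
\[
 \Big\langle~ \omega \cdot [X(D)]^{k(\vec{\nu})} \cdot \prod_j \taub_{\nu_j-1}({i_X}_*(\delta_{b_j} R_j))  ~\Big\rangle^{X(S)}_{(\beta_S,\,d)},
\]
which I will lift to the special fiber by placing the divisor classes $[X(D)]$ and the distinguished insertions ${i_X}_*(\delta_{b_j}R_j)$ on the $(X(S), X(D))$ component. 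The result is a sum over splittings $\eta=(\Gamma_1,\Gamma_2)$ of a product of an ancestor relative invariant on $(X(S),X(D))$ (with discrete data $\Gamma_1$) with one on $(X(P),X(D))$ (with discrete data $\Gamma_2$).

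I would then partition the splittings into three families, mirroring the proof of Proposition~\ref{prop:inversion'}: (1) $\beta_1 < \beta_S$, or $\beta_1 = \beta_S$ with $\Gamma_1$ disconnected; (2) $\beta_1 = \beta_S$, $\Gamma_1$ connected, and $\Gamma_2$ stable; (3) $\beta_1 = \beta_S$, $\Gamma_1$ connected, and $\Gamma_2$ unstable. Type~(1) yields strictly lower-order relative and rubber invariants on $(X(S),X(D))$ together with invariants on $X(P)$. For types~(2) and~(3), the $\Gamma_2$ factor is a fiber integral on $X(P)$; the strong virtual pushforward property (Proposition~\ref{prop:svpf}) together with the non-vanishing lemmas of Section~\ref{s:6.1} forces $\deg_D(\vec{\mu}) \geq \deg_D(\vec{\nu})$, where $\vec{\mu}$ is the weighted partition on the $\Gamma_1$ side. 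In the equality case, Lemma~\ref{lem:stable} combined with the dominance apparatus of Section~\ref{ss:dom} (in particular Lemma~\ref{lem:order}) identifies $\vec{\mu}$ with $\vec{\nu}$, while the ancestor relative insertions that appear are of the multiplicity-one type handled by Proposition~\ref{prop:ancestor}, which reduces them to primary relative insertions whose weighted partitions dominate $\vec{\mu}\sqcup\vec{\nu}$ or are of strictly lower order.

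The positivity of the coefficient $C(\vec{\nu})$ is the heart of the matter, and it proceeds as in Proposition~\ref{prop:inversion'}: after identifying $\vec{\mu}$ with $\vec{\nu}$, the coefficient factorizes into a product of two-pointed relative invariants $\langle (\nu_j,1)\mid t^{\nu_j}\taub_{\nu_j-1}(t)\rangle$ on $(\bp^1,\{0\})$ (possibly with extra divisor-class insertions when $k(\vec{\nu})=0$), and these are strictly positive by Lemma~\ref{=1} and Lemma~\ref{l:6.4.2}. The role of the $k(\vec{\nu})$ copies of $[X(D)]$ is precisely to stabilize the domain so that the relevant $\mbar_{0,n+m}$ exists and the ancestor classes make sense. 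Part~(2) of the proposition, the case $\vec{\nu}=\emptyset$, is simpler: in the degeneration of $\langle \omega\rangle^{X(S)}_{(\beta_S,\,d)}$ the unique top-order splitting places the entire $\omega$ on the $(X(S), X(D))$ side with trivial contact profile, producing $\langle \omega\mid \emptyset\rangle^{(X(S),X(D))}_{(\beta_S,\,d)}$, while every other splitting has $\beta_1 < \beta_S$ or places at least one stable component on the $X(P)$ side.

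The main obstacle I anticipate is the careful bookkeeping needed to match the partial ordering on weighted partitions of Section~\ref{s:1.5.2} with the dominance/vanishing constraints coming from the $X(P)$ fiber integrals, especially in the presence of ancestor relative insertions introduced by Proposition~\ref{prop:ancestor}. Beyond this bookkeeping, a subtle but manageable point is that the curve classes $(\beta_1,d_1)$ and $(\beta_2,d_2)$ on the two sides satisfy $\beta_S = \phi_{X,*}(\beta_1) + p_{X,*}(\beta_2)$ with $\int_{\beta_2} X(D) = \int_{\beta_1} X(D)$, and one must verify that the non-extremal assumption on $(\beta_S,d)$ combined with the fiber-class constraint on type~(2)/(3) splittings forces the $X(P)$ factor to have non-extremal class whenever it is non-trivial, so that the induction on $(\beta_S,d)$ combined with the $\T$-invariance results for $\bp^1$-bundles from Section~6 indeed closes the argument.
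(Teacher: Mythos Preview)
Your overall strategy is correct and matches the paper's proof, which simply applies the degeneration formula for the family $W(X(S), X(D)) \to \A^1$ to the absolute ancestor invariant and then refers back to the detailed case analysis of Proposition~\ref{prop:inversion'}. However, one point in your first paragraph is wrong and contradicts the rest of your outline: you say the divisor classes $[X(D)]$ and the distinguished insertions ${i_X}_*(\delta_{b_j}R_j)$ are lifted to the $(X(S), X(D))$ component, but they must be lifted to the bubble $X(P)$. In the total space $W = \bl_{X(D)\times\{0\}}(X(S)\times\A^1)$, the strict transform of $X(D)\times\A^1$ meets the central fiber in the infinity section $X(P_\infty)\subset X(P)$, so $[X(D)]$ specializes to $[X(P_\infty)]$, a class supported on the $X(P)$ side; this is exactly what the paper does (``lift the insertions $[X(D)]$ and ${i_X}_*(\delta_{b_j}R_j)$ to $X(P)$''). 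Your subsequent analysis---the $\Gamma_2$ factor being a fiber integral on $X(P)$ that carries the ancestor insertions $\taub_{\nu_j-1}$, the resulting inequality $\deg_D(\vec{\mu})\ge\deg_D(\vec{\nu})$, the dominance argument, and the positivity via Lemmas~\ref{=1} and~\ref{l:6.4.2}---only makes sense with this correct placement, so this is presumably a slip rather than a genuine misconception. With that correction your proposal is essentially the paper's argument.
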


\begin{proof}
For (1), consider the family $W(X(S), X(D)) \to \mathbb{A}^1$ and
a generating function of a general fiber
$$
\Big\langle~ 
\omega\cdot  [X(D)]^{k(\vec{\nu})}\cdot\prod_{j} \taub_{\nu_{j-1}}({i_{X}}_*(\delta_{b_j}R_j)) 
~\Big\rangle_{(\beta_S, \,d)}^{X(S)}.
$$
We can lift the insertions $X(D)$ and ${i_{X}}_*(\delta_{b_j}R_j)$ to $X(P)$ in the singular fiber.
The degeneration formula allows us to express this function 
 in terms of the ancestor relative invariants of $(X(S), X(D))$ and $(X(P), X(D))$.
For (2), apply the degeneration formula to 
$ \langle~  \omega  ~\rangle_{(\beta_S, \,d)}^{X(S)}$.

Analyzing the invariants involved in the degeneration formula by the arguments in the proof of Proposition \ref{prop:inversion'}, the proposition is proved.
\end{proof}


\begin{thm}\label{thm:rel2abs}
Let $(S,D)$ be a smooth pair.
If $\T$-invariance holds for $S$ and $D$, then it holds for $(S,D)$ 
with non-extremal curve classes:
\[
\T\Big(
\langle~ \omega \mid\vec{\nu}~ ~\rangle ^{(X(S), \,X(D))}_{(\beta_S, \,d)}\Big)
=
\langle~ 
\T(\omega) \mid\T(\vec{\nu})~~\rangle ^{(X'(S), \,X'(D))}_{(\beta_S, \,d)},
\] 
where we define
$$\T(\vec{\nu}) = \Big\{\big (\nu_j , \T(\delta_{b_j}R_j) \big)\Big\}.$$
\end{thm}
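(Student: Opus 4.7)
The plan is to argue by Noetherian induction on the partial ordering $((\beta_S,d),|\!|\omega|\!|,\vec{\nu})$ introduced in Section~\ref{s:1.5.2}, which was shown there to satisfy the DCC. The base case consists of the extremal classes $(\beta_S,d)=(0,0)$, which has already been handled by the preceding proposition via deformation to the normal cone and the strong virtual pushforward (Proposition~\ref{prop:svpf}). For the inductive step, fix a non-extremal pair $(\beta_S,d)$ and assume $\T$-invariance for every generating function of relative primary invariants on $(X(S),X(D))$ strictly lower than $\langle~\omega\mid\vec{\nu}~\rangle^{(X(S),\,X(D))}_{(\beta_S,\,d)}$ in this partial order.

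First, using Remark~\ref{r:1.3.1}, I can add pullback divisor insertions ($H$ from an ample divisor on $S$ when $\beta_S\neq 0$, or $\xi$ when $d\neq 0$) without altering the generating function, so I may assume $|\!|\omega|\!|+l(\vec{\nu})$ is as large as needed for all relevant ancestor moduli spaces to exist. Then I apply Proposition~\ref{prop:inversion}: for $\vec{\nu}$ non-empty it produces an identity of the form
\[
C(\vec{\nu})\,\langle~\omega\mid\vec{\nu}~\rangle^{(X(S),\,X(D))}_{(\beta_S,\,d)}
=\Big\langle~\omega\cdot[X(D)]^{k(\vec{\nu})}\cdot\prod_j\taub_{\nu_j-1}({i_X}_*(\delta_{b_j}R_j))~\Big\rangle^{X(S)}_{(\beta_S,\,d)}+\mathcal{R},
\]
where $C(\vec{\nu})>0$ and the remainder $\mathcal{R}$ is polynomially generated by (a) relative generating functions on $(X(S),X(D))$ strictly lower than the left-hand side in the partial order, and (b) relative and rubber invariants on the $\bp^1$ bundle $X(P)\to X(D)$; the case of empty $\vec{\nu}$ is handled analogously by Proposition~\ref{prop:inversion}(2).

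The crux is to verify $\T$-invariance of each piece on the right-hand side. The ancestor absolute invariant on $X(S)$ is $\T$-invariant by the hypothesis that $\T$-invariance holds for $S$, combined with Lemma~\ref{lem:TRR} (TRR to reduce ancestors to primaries), noting that the pushforward classes ${i_X}_*(\delta_{b_j}R_j)$ and $[X(D)]$ are compatible with $\T$ by the motivic naturality recorded in Section~1.4.1. The lower-order relative terms in $\mathcal{R}$ are $\T$-invariant by the inductive hypothesis. Finally, the relative, type II, and rubber generating functions on $X(P)\to X(D)$ entering $\mathcal{R}$ are $\T$-invariant by Corollary~\ref{QLH-fiber-2}, Theorems~\ref{thm:fibertypeii}, \ref{thm:typeii}, \ref{thm:rubber}, \ref{thm:fibertypei}, and \ref{thm:typei}, all of which only require $\T$-invariance for $D$ (the given hypothesis).

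The main obstacle is bookkeeping: one must certify that every term appearing in $\mathcal{R}$ is truly strictly lower than $\langle~\omega\mid\vec{\nu}~\rangle^{(X(S),\,X(D))}_{(\beta_S,\,d)}$ in the partial order on $((\beta_S,d),|\!|\omega|\!|,\vec{\nu})$, so the induction is well-founded. This is precisely what Proposition~\ref{prop:inversion} guarantees via the dominance analysis of Section~\ref{ss:dom} and the positivity of the coefficient $C(\vec{\nu})$ from Section~\ref{ss:nonzero}; assembling these, the analytic continuation along $q^\ell$ on the $X(S)$ side matches the analytic continuation along $q^{\ell'}$ on the $X'(S)$ side under $\T q^\ell = (q^{\ell'})^{-1}$, because $\phi_X$ and $p_X$ preserve the extremal classes (as used in Section~\ref{s:7.1}). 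This completes the inductive step and hence the theorem for primary relative invariants; the ancestor relative version then follows from the relative TRR discussed in the appendix, exactly as in Lemma~\ref{lem:TRR}.
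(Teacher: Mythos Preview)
Your proposal is correct and follows essentially the same approach as the paper: induction on the partial ordering via Proposition~\ref{prop:inversion}, with the absolute ancestor term handled by the hypothesis on $S$ (plus Lemma~\ref{lem:TRR}), the lower-order relative terms by induction, and the $\bp^1$-bundle terms by the results of Sections~5--6. The paper's proof differs only in one small presentational emphasis: it explicitly re-runs the argument of Proposition~\ref{prop:inversion} on the $X'(S)$ side to exhibit $\langle\,\T(\omega)\mid\T(\vec{\nu})\,\rangle^{(X'(S),X'(D))}_{(\beta_S,d)}$ as the highest-order term of the corresponding $X'$-ancestor, whereas you fold this into the assertion that the $\T$-images of the pieces of $\mathcal{R}$ coincide with the $X'$-side remainder; both amount to the same thing once one notes that the degeneration coefficients and the constant $C(\vec{\nu})$ are purely combinatorial and hence agree on the two sides.
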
 

\begin{proof}
This can be proved inductively using Proposition \ref{prop:inversion}.
Using the argument for Proposition \ref{prop:inversion},
we can show that
\[
\Big \langle~ 
\T(\omega) \cdot  [X'(D)]^{k(\vec{\nu})}\cdot\prod_{j} \taub_{\nu_{j-1}}\Big({i_{X'}}_*\big(\T(\delta_{b_j}R_j)\big)\Big) 
~\Big \rangle_{(\beta_S, \,d)}^{X'(S)}\]
has a highest order term
\[
\langle~ 
\T(\omega) \mid \T(\vec{\nu}) ~\rangle ^{(X'(S),\, X'(D))}_{(\beta_S,\, d)}.\]

The lower order terms are $\T$-invariant by induction. The theorem now follows from combining Theorems \ref{thm:rubber}, \ref{thm:fibertypei} and \ref{thm:typei}. 
\end{proof}

\section{Conclusion of the proof of Theorem~\ref{t:main}} \label{TheProof}

To prove $\T$-invariance for $(S,F,F')$, we reduce the general case to the case when $F$ and $F'$ admit complete flags, then by deformation invariance of $\T$-invariance, to the split case.

\subsection{Motivation of the refined induction}

Before we go to the actual proof, we would like to motivate the (refined) induction procedure by looking at some starting cases. 

Let $(S, F, F')$ be the triple defining the local model of an ordinary flop. If $\dim S = 0$ this is the simple flop case and the $\T$-invariance was proved in \cite{LLW}. Indeed, if $\dim S = 1$ then $F$ and $F'$ admit complete flags and the $\T$-invariance is reduced to the split case proved in \cite{LLW2}. 

If $\dim S = 2$, $F$ and $F'$ may not admit complete flags. We need to perform a sequence of blow-ups $S_{i + 1} \to S_i$ ($S_0 = S$) to achieve this property. Fortunately each blow-up has only points as its center and the normal bundles are all trivial. In particular the easier quantum Leray--Hirsch (Theorem~\ref{QLH-fiber}) and the ``$\T$-invariance for $\bp^1$ bundles'' established in the previous sections all apply and the $\T$-invariance is again reduced to $S_i$ for $i$ large where the pullbacks of $F$ and $F'$ admits complete flags and the proof is done. 

Essentially the same argument applies to the case $\dim S = 3$: Let $T \subset S$ be the blow-up center. During the applications of degeneration formulas and deformation to the normal cone, the essential objects to take care are the normal bundles 
$$
N = N_{T/S} \quad \mbox{and} \quad 
N_{T\times \{0\}/(S \times \A^1)} = N \oplus \oo.
$$ 
If $\dim T \le 1$, then $N$ is either trivial or deformable to split bundles. If $\dim T = 2$, then $N$ is already a line bundle. In all cases the easier quantum Leray--Hirsch and $\T$-invariance for $\bp^1$ bundles apply and the proof is done without the need of any refinement to the blow-ups. 

The situation changes when $\dim S = 4$ and $S_1 \to S$ is the blow-up along $T$ with $\dim T = 2$. In this case $N = N_{T/S}$ may not admit complete flags anymore. The space $\bp_T(N)$ is a 3-fold whose $\T$-invariance can be assumed by induction. However, $P = \bp_T(N \oplus \oo)$ is also of the same dimension as $S$, with $N \oplus \oo$ being non-deformable to split bundles. In particular we are unable to deduce $\T$-invariance for $P$ from that for $T$ (which is known by induction since $\dim T < \dim S$) via Theorem~\ref{QLH-fiber}. Thus, an additional sequence of blow-ups on the surface $T$ is indispensable in order for the proof to proceed. 

Indeed, for each step of blow-up $S_{i + 1} \to S_i$ along some $T_i \subset S_i$, the normal bundle $N_i = N_{T_i/S_i}$ has to be treated similarly. 
It is therefore more economic to use a refined induction presented in the following subsection.

\subsection{Proof of the main theorem by reduction to split bundles} \label{s:8.1}

Given a triple $(T,G, G')$ and a finite set of vector bundles $N_1, N_2, \cdots, N_k$ over $T$, we will use the notation 
$$(T, G, G'; \{N_i\})$$ 
to represent a triple $(S, F, F')$, where
$$
S = \bp_{T}(N_1) \times_T \cdots \times_T \bp_{T}(N_k) \to T
$$
is the fiber product and $F$, $F'$ are the pullbacks of $G$, $G'$ from $T$ to $S$. We introduce this notation to streamline the induction argument in Theorem \ref{thm:inv}.

\begin{lemma} \label{l:flag}
$(T, G, G'; \{N_i\})$ becomes a triple with bundles admitting complete 
flags after a sequence of blowing-ups.
\end{lemma}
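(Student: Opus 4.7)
The plan is to perform the splitting at the level of the base $T$ and then transfer the result to $S$ by a smooth base change. The two ingredients are an iterated application of Lemma~\ref{l:5.3.1} and the standard fact that blowing up along smooth centers commutes with smooth (in particular flat) base change.

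First I would apply Lemma~\ref{l:5.3.1} iteratively to each of the bundles $G$, $G'$, $N_1, \ldots, N_k$ living over $T$. Starting with $G$, Lemma~\ref{l:5.3.1} produces a sequence of blow-ups $\pi_1 : T_1 \to T$ along smooth centers after which $\pi_1^* G$ admits a complete flag. Applying the same lemma to $\pi_1^* G'$ on $T_1$ yields $\pi_2: T_2 \to T_1$ making $(\pi_1 \circ \pi_2)^* G'$ admit complete flags; crucially, the filtration on $(\pi_1 \circ \pi_2)^* G$ is preserved, because the pullback of a filtration by subbundles of fixed ranks remains such a filtration. Iterating this across $N_1, \ldots, N_k$, after finitely many steps I obtain a sequence of blow-ups $\pi: \tilde{T} \to T$ along smooth centers so that every one of $\pi^* G$, $\pi^* G'$, $\pi^* N_1, \ldots, \pi^* N_k$ admits a complete flag on $\tilde T$.

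Next I would transport this to $S$. Since $S = \bp_T(N_1) \times_T \cdots \times_T \bp_T(N_k) \to T$ is a composition of smooth morphisms, the base change $\tilde{S} := S \times_T \tilde{T}$ is obtained from $S$ by the analogous sequence of blow-ups along smooth centers (the preimages in $S$ of the centers in $T$). There is a canonical identification
\[
 \tilde{S} \;\simeq\; \bp_{\tilde T}(\pi^* N_1) \times_{\tilde T} \cdots \times_{\tilde T} \bp_{\tilde T}(\pi^* N_k),
\]
so the data is again of the form $(\tilde T, \pi^* G, \pi^* G'; \{\pi^* N_i\})$, and the pullbacks $\tilde F, \tilde F'$ of $F, F'$ to $\tilde S$ inherit complete flag filtrations from $\pi^* G, \pi^* G'$. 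Thus the new triple satisfies the desired property.

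The main obstacle is conceptual rather than computational: one needs to recognize that it is cleaner to perform all splittings on the common base $T$, where each bundle lives, instead of trying to split bundles over the composite $S$ directly. Once this is done, there is nothing to worry about regarding the preservation of earlier flag structures under subsequent blow-ups, and the passage from blow-ups of $T$ to blow-ups of $S$ is a standard consequence of smooth base change for blowing up along smooth centers.
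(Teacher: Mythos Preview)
Your proof is correct and follows essentially the same approach as the paper, which simply states that the lemma is an immediate consequence of Lemma~\ref{l:5.3.1}. You have spelled out the iteration over the bundles and the passage from blow-ups of $T$ to blow-ups of $S$ via smooth base change, which the paper leaves implicit; your additional step of also making the $N_i$ admit complete flags is harmless (and arguably closer to how the lemma is used in Theorem~\ref{thm:inv}), though strictly speaking the triple $(S,F,F')$ only requires $G$ and $G'$ to be handled.
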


\begin{proof}
This is an immediate consequence of Lemma~\ref{l:5.3.1}.
\end{proof}

Let $Y$ be a smooth subvariety of $T$ and 
$$
\tilde T = {\rm Bl}_Y T \to T
$$ 
the blowing-up of $T$ along $Y$. Denote by $\tilde G$, $\tilde G'$, $\tilde N_i$ the pullbacks of $G$, $G'$, $N_i$ to $\tilde T$ respectively.

\begin{lemma} \label{lem:blowup}
$\T$-invariance for 
\[
(\tilde T, \tilde G, \tilde G'; \{\tilde N_i\}), \qquad (Y, G|_{Y}, G'|_{Y};
\{ {N_i}|_{Y}\}\cup\{N_{Y/T}\}),
\]
and 
\[(Y, G|_{Y}, G'|_{Y}; \{ {N_i}|_{Y}\}\cup\{N_{Y/T}\oplus \oo\})
\]
implies $\T$-invariance for $(T, G, G'; \{N_i\})$ with non-extremal $(\beta_S, \,d)$.
\end{lemma}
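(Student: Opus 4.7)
The plan is to realize the three hypothesized triples as exactly the ``building blocks'' arising from a deformation to the normal cone of a codimension-$\operatorname{codim}(Y,T)$ subvariety $Z\subset S$, then apply Proposition~\ref{prop:abs2rel} to pass from pair invariants to absolute invariants of $S$, and finally use Theorem~\ref{thm:rel2abs} to upgrade the absolute $\T$-invariance we are handed into the relative $\T$-invariance required.

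First I would set $S=\bp_T(N_1)\times_T\cdots\times_T\bp_T(N_k)$ with structural map $\pi:S\to T$, and $Z:=\pi^{-1}(Y)=\bp_Y(N_1|_Y)\times_Y\cdots\times_Y\bp_Y(N_k|_Y)$. Since $\pi$ is flat and $Y\hookrightarrow T$ is a regular embedding, blow-up commutes with the base change $\pi$, giving
\[
  \bl_Z S \;=\; S\times_T \tilde T \;=\; \bp_{\tilde T}(\tilde N_1)\times_{\tilde T}\cdots\times_{\tilde T}\bp_{\tilde T}(\tilde N_k),
\]
which is exactly the $S$-variety of the first triple $(\tilde T,\tilde G,\tilde G';\{\tilde N_i\})$; denote it $\tilde S$, and note that the $(F,F')$-bundles on $\tilde S$ are the pullbacks of $(\tilde G,\tilde G')$, compatibly with the restrictions of $F,F'$ from $S$. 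Flatness also yields $N_{Z/S}\cong\pi_Z^{*}N_{Y/T}$ with $\pi_Z:Z\to Y$, so
\[
  E \;=\; \bp_Z(N_{Z/S}) \;=\; Z\times_Y\bp_Y(N_{Y/T}), \qquad
  P \;=\; \bp_Z(N_{Z/S}\oplus \oo) \;=\; Z\times_Y\bp_Y(N_{Y/T}\oplus \oo),
\]
which are precisely the $S$-varieties of the remaining two triples, with the corresponding $(F,F')$ pairs pulled back from $G|_Y,G'|_Y$ through the iterated projective bundle structure.

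Next I would run the deformation to the normal cone for $Z\hookrightarrow S$ (Section~\ref{subsec:blowup}) and pass to the local-model functor, producing the family $X(W(S,Z))\to\A^1$ whose general fiber is $X(S)$ and whose special fiber is $X(\tilde S)\cup_{X(E)}X(P)$. Proposition~\ref{prop:abs2rel} then asserts that $\T$-invariance for the smooth pairs $(\tilde S,E)$ and $(P,E)$ with non-extremal curve classes implies $\T$-invariance for $S$ with non-extremal $(\beta_S,d)$, which is the desired conclusion.

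It remains to upgrade the hypotheses to the relative form used above. Since the three triples in the hypothesis are exactly the absolute models $\tilde S$, $E$, and $P$ identified above, Theorem~\ref{thm:rel2abs} applies twice: $\T$-invariance for $\tilde S$ and $E$ gives $\T$-invariance for the pair $(\tilde S,E)$ with non-extremal classes, and $\T$-invariance for $P$ and $E$ gives the same for $(P,E)$. Combined with the previous step this completes the reduction. The only nontrivial geometric input is the identification $\bl_Z S=\tilde S$ as an iterated projective bundle over $\tilde T$, which reduces to the fact that projective bundles are flat and blow-ups commute with flat base change along regular embeddings; the rest is a bookkeeping matching between the bundle data on the triples and the corresponding $F,F'$ on the various absolute/relative models, fitting the situation into the machinery developed in Sections~\ref{s:7.1} and \ref{s:inv}.
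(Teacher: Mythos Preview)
Your proposal is correct and follows essentially the same approach as the paper's proof: identify $Z=\pi^{-1}(Y)$, recognize $\tilde S=\bl_Z S$, $E$, and $P$ as the three triples in the hypothesis, then invoke Proposition~\ref{prop:abs2rel} and Theorem~\ref{thm:rel2abs}. You supply more geometric justification (blow-up commuting with flat base change, the explicit identification $N_{Z/S}\cong\pi_Z^*N_{Y/T}$) than the paper's terse version, but the logical skeleton is identical.
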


\begin{proof} 
The lemma follows from Proposition~\ref{prop:abs2rel} and Theorem \ref{thm:rel2abs}:

We have a natural projection 
$$
S=\prod_{T} \bp_{T}(N_i) \to T.
$$ 
Let $Z$ be the fiber product $S \times_T Y$, which is $\prod_{Y} \bp({N_i}|_{Y})$. Note that the normal bundle of $Z$ in $S$ is the pullback from $Y$ if its normal bundle in $T$. Let
$$
\tilde S = {\rm Bl}_Z S \to S
$$ 
be the blow-up of $S$ along $Z$ with exceptional divisor $E$. 

Recall Proposition~\ref{prop:abs2rel} says that invariance for $S$ follows from invariance for $(\tilde{S}, E)$ and $(P,E)$. By Theorem \ref{thm:rel2abs}, invariance of $(\tilde{S}, E)$ (resp. $(P,E)$) follows from those for $\tilde{S}$ and  $E$ (resp.$P$ and $E$). So invariance of $\tilde{S}, E$ and $P$ implies invariance for $S$, this is exactly what the lemma claims.
\end{proof}

\begin{thm} \label{thm:inv}
$\T$-invariance holds for all $(T, G, G'; \{N_i\})$ provided it holds for any 
triple with split vector bundles.
\end{thm}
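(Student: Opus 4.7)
The plan is to run a double induction. The outer induction is on $\dim T$, with base case $\dim T = 0$: here $T = \Spec\cc$ and every vector bundle on $T$ is split, so the conclusion is the hypothesis of the theorem (the simple flop case, already settled in \cite{LLW}). For the inductive step we fix $T$ and assume $\T$-invariance for every triple whose base has dimension strictly less than $\dim T$.

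Before turning to the main reduction, the extremal curve class $(\beta_S, d) = (0,0)$ must be peeled off, since Lemma~\ref{lem:blowup} only delivers the non-extremal part. Applying deformation to the normal cone for $X(D) \hookrightarrow X(S)$ and using Proposition~\ref{prop:svpf}(1) to kill the $X(P)$-side fiber integral, extremal relative invariants on $(S,D)$ reduce to extremal absolute invariants of $S$, which were treated in Part I \cite{LLW1}. From now on we restrict to non-extremal $(\beta_S, d)$.

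Next I would perform an inner induction on the minimal length $k$ of a sequence of blowing-ups along smooth centers $\pi \colon T^\ast \to T$ after which $\pi^\ast G$, $\pi^\ast G'$ and every $\pi^\ast N_i$ admit complete flags; such a $k < \infty$ exists by iterated application of Lemma~\ref{l:flag}. For $k=0$ the bundles already admit complete flags over $T$, so by Lemma~\ref{l:deform} each of $G, G', N_1, \dots, N_m$ can be deformed (with $T$ fixed) to a split bundle. Applying Proposition~\ref{prop:definv} finitely many times, $\T$-invariance for $(T, G, G'; \{N_i\})$ reduces to the split case, which is the hypothesis of the theorem. For $k \ge 1$, let $Y \subset T$ be the smooth center of the first blowup in the chosen sequence and set $\tilde T = \bl_Y T$. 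By Lemma~\ref{lem:blowup}, $\T$-invariance for $(T, G, G'; \{N_i\})$ (non-extremal) is implied by that of the three triples
\[
 (\tilde T, \tilde G, \tilde G'; \{\tilde N_i\}),\qquad
 (Y, G|_Y, G'|_Y; \{N_i|_Y\} \cup \{N_{Y/T}\}),\qquad
 (Y, G|_Y, G'|_Y; \{N_i|_Y\} \cup \{N_{Y/T} \oplus \oo\}).
\]
The two triples over $Y$ have base of strictly smaller dimension, hence the outer inductive hypothesis applies to them, irrespective of whether the new factor $N_{Y/T}$ (or $N_{Y/T}\oplus\oo$) admits flags. For the triple over $\tilde T$ the base dimension is unchanged, but the minimal flag-resolving blowup length has dropped to $k-1$, so the inner inductive hypothesis applies.

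The main subtlety to watch is the interaction between the two inductions: blowing up $T$ along $Y$ keeps $\dim T$ unchanged but spawns triples over $Y$ carrying additional, \emph{a priori} arbitrary normal bundle factors $N_{Y/T}$. The whole reason for introducing the flexible notation $(T, G, G'; \{N_i\})$ in the statement of the theorem is precisely to allow the outer induction on $\dim T$ to absorb those new factors without constraint, since $\dim Y < \dim T$. This is the refined induction procedure alluded to in Section~\ref{strategy}, and once it is set up the remaining work (deformation invariance, the blowup formula, the extremal reduction) has already been performed in the previous sections.
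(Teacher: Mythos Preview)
Your argument is essentially the paper's: induction on $\dim T$, with Lemma~\ref{lem:blowup} reducing $T$ to $\tilde T$ (the two $Y$-triples being absorbed by the outer inductive hypothesis) until all bundles admit complete flags, then deformation to the split case. You package the inner reduction as an explicit induction on blowup length $k$; the paper leaves this as ``after a finite number of blowing-ups'' and treats the terminal step via Theorem~\ref{QLH-fiber}, but the structure is the same.

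One small slip: in your $k=0$ step you propose to deform the $N_i$ as well and invoke Proposition~\ref{prop:definv}, but that proposition only covers deformations of the pair $F,F'$ with the base $S$ fixed, not deformations of the $N_i$ (which change $S$ itself). Fortunately this step is unnecessary: once $G,G'$ are deformed to split bundles, their pullbacks $F,F'$ to $S$ are already split regardless of the $N_i$, so $(S,F,F')$ is a triple with split bundles and the hypothesis of the theorem applies directly. Also, your treatment of the extremal case detours through relative invariants; for the statement at hand only absolute invariants with $(\beta_S,d)=(0,0)$ are in question, and these are covered directly by Part~I \cite{LLW1}, as the paper simply notes.
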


\begin{proof}

If $(\beta_S, d)=(0, 0)$, $\T$-invariance was proved in Part I \cite{LLW1}.

For non-extremal generating functions with curve classes $(\beta_S, d)$, we will prove $\T$-invariance \emph{for all} 
$$
(T, G, G'; \{N_i\})
$$
by induction on the dimension of $T$. We simply call this statement as ``invariance for $T$''.

Note that by Proposition \ref{prop:definv}, we can assume $\T$-invariance holds for any triple with bundles admitting complete flags. When $\dim T \le 1$, $G$, $G'$ and $N_i$'s all admit complete flags, so $\T$-invariance holds for $(T, G, G')$ and then it holds for $(S, F, F')$ (i.e.~$(T, G, G'; \{N_i\})$) by Theorem \ref{QLH-fiber}. 

When $\dim T \ge 2$,  for any $Y$ a smooth subvariety of $T$, we may assume that invariance is proved for $Y$. Then by Lemma \ref{lem:blowup}, invariance for $T$ 
follows from that for $\tilde T$. Then by Lemma \ref{l:flag}, after a finite number of
blowing-ups, we are left to proving invariance for a triple with bundles admitting complete 
flags, which is guaranteed by our hypothesis.
\end{proof}

\begin{proof}[Proof of Theorem~\ref{t:main}]
Since $\T$-invariance of split bundles was proved in Part II \cite{LLW2},
Theorem~\ref{thm:inv} implies Theorem~\ref{t:main}.
\end{proof}

\subsection{Comments on algebraic cobordism of bundles on varieties}
\label{s:8.2}

A ``dream proof'' of Theorem~\ref{t:main} 
would be to apply the idea of the algebraic cobordism in \cite{mLP, LP3}.
Theorem~1 in \cite{LP3} implies that, up to double point degeneration, 
any list of vector bundles is equivalent to a $\mathbb{Q}$ combination
of split vector bundles on products of projective spaces,
whose genus zero theory can be easily computed as toric varieties.
Furthermore, as the quantum cohomology of a toric variety is semisimple,
the higher genus theory can be deduced from genus zero theory
by Givental's quantization formalism.

The major obstacle of this approach is the lack of 
``inversion of degeneration'' for a general double point degeneration.
Section~\ref{s:7.1} says that the $\T$-invariance is preserved 
under a ``forward degeneration''.
In Section~\ref{s:inv} we established the inversion of degeneration
for deformation to normal cones, which we will call
"backward DNC" for convenience.
Now, let us define a ``directed'' (non-reflexive) double point degeneration relation 
which allows forward degenerations and backward DNC.

\begin{question}
Can results similar to \cite{mLP, LP3} (in particular Theorem~1 in \cite{LP3})
be obtained by ``directed double point degeneration'' above?
\end{question}

If so, the story will be much simpler. At this moment,
we have no idea whether this is possible.
Nonetheless, the forward degenerations and backward DNC can lead us 
a bit further than we have employed in this paper.
For example, if $\dim S=1$, i.e.\ a curve,
one sees immediately that only forward double
point degenerations, which includes deformations, 
are needed to reduce the proof of $\T$-invariance of
$(S, F, F')$ to  to $(\bp^1, F, F')$, 
with $F$ and $F'$ being of the form 
$\oplus \mathscr{O}(k_i)$ and $\oplus \mathscr{O}(k'_i)$.
If we assume results in Sections~2-5 can be generalized to higher genera,
the above strategy will also apply to any genus.
Therefore, we just need to establish the $\T$-covariance for the 
\emph{absolute} invariants on $(\bp^1, F, F')$ in all genera.
However, since the local models built upon $(\bp^1, F, F')$ are
toric, $\T$-invariance of higher genera follows from that of
genus zero by the strategy in \cite{ILLW}.

For general $S$, the reduction in genus zero
can also go further than we have used in Section~\ref{s:8.1}.
Tracing the arguments in Sections~3.1 and 2.5 in \cite{LP3}, 
one sees that the triple $(S, F, F')$ can be reduced
to the following two special types by deformations to the normal cones only:

\begin{enumerate}
\item[(i)] $F$, $F'$ are direct sum of globally generated (bpf) line bundles;
\item[(ii)] $S = (\mathbb{P}^1)^{\times n}$, and 
$F= F' = \mathscr{O}(l_1, \ldots, l_{r+1})^{\oplus (r+1)}$.
\end{enumerate}

\begin{appendix}
\section{TRR for relative ancestors}


For $\Gamma = (0, [n+m], \beta, \mu)$, let $\K_{\Gamma}(X,D)$ be 
Kim's moduli stack of relative stable maps 
from genus 0, $n+m$ marked curve to $(X,D)$,
with curve class $\beta$, relative profile $\mu$;
$n$ is the number of internal (or non-relative) marked points, and $m=l(\mu)$.
When $n+m \ge 3$,
consider 
$$\rho^{\Gamma}: \K_{\Gamma}(X,D) \to \mbar_{0,n+m},$$
which maps a relative stable map to the stabilization of the source curve.

For a partition of $[n+m]$ into a disjoint union $A \sqcup B$, denote by 
\[
D_{A|B}: \mbar_{0, A \cup \{\circ_A\}}  \times \mbar_{0, B \cup  \{\circ_B\}}
\to
\mbar_{0,n+m}\]  
the map gluing $\circ_A$ and $\circ_B$. 
By abusing notation, we will also 
use $D_{A|B}$ to represent the corresponding divisor of $\mbar_{n+m}$.
We will use $\alpha, \delta$ to represent cohomology classes in  
$H^*(X), H^*(D)$ respectively, adding subscripts to number different classes,  
and superscript $\vee$ to represent a dual class with respect to 
some chosen basis.
We use the notation for the virtual cycles
\[
 \llangle~
 \prod_{j=1}^n\alpha_j \mid \prod_{i=1}^m \delta_i
 ~\rrangle^{(X,D)}_{\Gamma}
 := [\K_{\Gamma}(X,D)]^{\vir} \cap 
\prod_{j=1}^n ev_j^*(\alpha_j) \prod_{i=1}^m ev_i^*(\delta_i).
\]
For cycles on a moduli stack to a rubber target, 
we will use the same notation except adding a superscript $\sim$.
We might add a superscript $\bullet$ to emphasis that the source curve is 
possibly disconnected.

For even classes $\{\alpha_j\}_{1\le j \le n}$, $\{ \delta_i \}_{1 \le i\le m}$.
We have
\begin{equation} \label{e:A.0.1} 
\begin{split}
&\rho_*^{\Gamma} 
\big( \llangle~
\prod_{j=1}^n \alpha_j \mid \prod_{i=1}^m \delta_i
~\rrangle^{(X,D)}_{\Gamma} \big)
 \cap D_{A|B}  
 \\
= &\sum_{(\Gamma_A,\Gamma_B)}  \sum_{\alpha} (D_{A|B})_* \Big(
 \rho^{\Gamma_A}_* \llangle  ~\prod_{j \in A} \alpha_j
  \cdot \alpha \mid \prod_{i \in A} \delta_i ~\rrangle^{(X,D)}_{\Gamma_A} 
 \times \\ 
 & \qquad \qquad \rho^{\Gamma_B}_*
 \llangle~
  \prod_{j \in B} \alpha_j
  \cdot \alpha^{\vee} \mid \prod_{i \in B} \delta_i ~\rrangle^{(X,D)}_{\Gamma_B}
\Big) \\
+ &\sum_{\eta \in \Omega} 
\frac{m(\eta)}{|M|! \cdot c(\eta)}
\sum_{\{\delta_* \mid * \in M\}}
\rho^{(\Xi,\Xi')}_* \Big(
\llangle ~\prod_{j\in N} \alpha_{j} \mid \prod_{* \in M} \delta_*
~\rrangle^{\bullet (X,D)}_{{\Xi}}
\times \\
&\qquad  \qquad \llangle~
\prod_{* \in M} \delta_*^{\vee} \mid \prod_{j\in N'} \alpha_{j} \mid
\prod_{i=1}^m \delta_{i}
~\rrangle^{\bullet P\sim }_{{\Xi'}}
\Big).
\end{split}
\end{equation}

Let 
$$\Gamma_A=(0, A\cup \{\circ_A\}, \beta_A, \mu_A) \quad \text{and} \quad
\Gamma_B=(0, B\cup\{ \circ_B\}, \beta_B, \mu_B),$$
then $\circ_A$ and $\circ_B$ denote internal marked points.
The summation $\sum_{(\Gamma_A, \Gamma_B)}$ are over those 
$(\Gamma_A,\Gamma_B)$'s such that 
$\beta_A + \beta_B =\beta$, and $\mu$ is the disjoint union of 
$\mu_A$ and $\mu_B$.
The summation $\sum_{\alpha}$ runs over a basis of $H^*(X)$.

$\eta=(\Xi, \Xi')$ is a splitting of $\Gamma$ into 
two modular graphs with necessary compatibility conditions. 
See \cite[Definition 4.8.1, 5.1.1]{AF}.
$\Omega$ is the set of all
possible splittings.
$M$ is a labeling of the set of roots for $\Xi$ and $\Xi'$. (i.e. the set of marked points mapped into the divisor $D$ or $P_\infty$).
$N$ (reps.~$N'$) is a labeling of the set of legs of the modular graph $\Xi$ (reps.~$\Xi'$) (i.e. the set of the remaining marked points).
$m(\eta)$ is the product of the contact orders of the roots in $M$. By the compatibility of 
$\eta$ and $\Gamma$, there is a root $\circ_{\Xi}$ in $\Xi$ 
corresponding to a root  $\circ_{\Xi'}$ in $\Xi'$  such that if we glue all the roots but $\circ_{\Xi}$ and $\circ_{\Xi'}$, we get a disconnect curve in $\mbar_{0, A \cup \{\circ_A\}}  \times \mbar_{0, B \cup  \{\circ_B\}}$.
$c(\eta)$ is the contact order for the root $\circ_{\Xi}$ or $\circ_{\Xi'}$.
$\rho^{(\Xi,\Xi')}$ is the map 
\begin{multline*}
\K_{\Xi}(X,D) \times \K_{\Xi'}(P,P_0,P_\infty)^{\sim} \to \\
\prod_{v \in V(\Xi)} \mathfrak{M}_{g(v), n(v)} \times 
   \prod_{v \in V(\Xi')} \mathfrak{M}_{g(v),n(v)} 
\to \mbar_{0, n+m}.
\end{multline*} 
The first arrow is mapped to the source curve;
the second arrow is the gluing of all the roots.

Using these it is straightforward to deduce a TRR using 
\[ \psi_i = \sum_{i\in A, ~{j,k} \in B} D_{A|B}  \in A^*(\mbar_{0,n+m}) \]
Here $i,j,k$ are three indexes in $[n+m]$.   
These are the ancestors we consider in the paper.

\end{appendix}

\end{document}